\newtheorem{theorem}{Theorem}[section]
\newtheorem{lemma}[theorem]{Lemma}
\newtheorem{corollary}[theorem]{Corollary}
\newtheorem{proposition}[theorem]{Proposition}
\theoremstyle{remark}
\theoremstyle{definition}
\numberwithin{equation}{section} \makeatother
\DeclareMathOperator{\Cdb}{{\mathbb C}}
\DeclareMathOperator{\Ndb}{{\mathbb N}}
\begin{document}

\title[Open projections in operator algebras II]{Open projections in operator algebras II: Compact projections} \author[David P. Blecher]{David P. Blecher} \author[Matthew Neal]{Matthew Neal}

\address{Department of Mathematics, University of Houston, Houston, TX
77204-3008}
 \email[David P.
Blecher]{dblecher@math.uh.edu}
\address{Department of Mathematics,
Denison University, Granville, OH 43023}
\email{nealm@denison.edu}
\thanks{The first author was supported by an extension of grant
0800674 from the National Science Foundation.
The second author was supported by Denison University.  Revision of March 15, 2012.
To appear, Studia Mathematica} \begin{abstract}
We  generalize some aspects of the theory of
compact projections relative to a $C^*$-algebra, to the setting of
more general algebras.
Our main result is that compact projections are the decreasing limits
of `peak projections', and in the separable case compact projections  are just the
peak projections.  We also establish new forms of the noncommutative Urysohn lemma
relative to an operator algebra, and we show that a projection is compact
iff the associated face in the state space of the algebra is weak* closed.
  \end{abstract}

\subjclass[2010]{Primary 46L07, 46L85, 47L30, 46L52;
Secondary   32T40, 46H10, 46L05, 46L30}

\keywords{TRO's, nonselfadjoint
operator algebras, open projection, closed projection, compact projection, peak projection,
minimal projection, noncommutative Urysohn lemma, faces, exposed faces, peak projection,
semiexposed faces, pure states, quasi-state, state space, hereditary
subalgebra, ideals, JB*-triples}

\maketitle

\let\text=\mbox

\section{Introduction and notation}

For us, an operator algebra is a closed algebra of operators on 
a Hilbert space.    Selfadjoint operator algebras, or $C^*$-algebras,
are often thought of as a kind of noncommutative topology, and 
one explicit and important manifestation of this is 
Akemann's noncommutative topology.
He introduced (see e.g.\ \cite{Ake,Ake2,AAP})
 the open, closed, and compact projections,
certain projections in the second dual of the algebra, which
generalize open, closed, and compact subsets of a 
topological space.    
In \cite{BHN,Hay} the authors and Hay generalized much of the 
 theory of open and closed
projections in $C^*$-algebras, to 
 general operator algebras, thereby initiating a   
`noncommutative topology relative to a
general operator algebra'.  This should 
be useful in `noncommutative function theory' in the 
ways that {\em peak sets} and related 
tools have been useful in the study of function
spaces (see e.g.\ \cite{Gamelin}).   This study 
was considerably advanced in \cite{BRead}.  In \cite{BN0} the authors 
studied the generalization of 
 compact projections to the setting of ternary 
rings of operators (TROs) (see also e.g.\ \cite{AP,ER4,FP0}).  In the present paper we
begin to study the analogue of compact projections in the 
setting of 
general operator algebras.   We do this for the primary reason that
this topic is important in its own right, and should have 
many future applications in `noncommutative function theory'.
To a lesser extent, we hope that our results will eventually 
find some application in the comparison theory program initiated 
in our recent paper \cite{BN}.   Indeed in the $C^*$-algebra
case (see e.g.\ \cite{ORT}),  the important topic of 
Cuntz equivalence and subequivalence is intimately connected to
 compact projections.     One should beware though,
direct attempts to apply our results to Cuntz comparison face significant problems.
  Indeed simple examples
such as variants of the disk algebra, seem to indicate that for Cuntz comparison one 
would probably mostly need to use compact
projections with respect to a containing $C^*$-algebra, rather than 
with respect to the algebra itself.

The study of compact projections in the second dual of an 
operator algebra $A$ turns out to be closely related to
the topic of `Urysohn type lemmas' for $A$.  The noncommutative Urysohn lemma
for $C^*$-algebras was proved by Akemann \cite{Ake}, with later refinements
by him and coauthors (see our bibliography), and by L. G. Brown \cite{Brown}.
We begin Section 
2 below with a noncommutative Urysohn lemma for nonunital algebras, which involves
an $\epsilon$ that does not appear in the $C^*$-algebraic Urysohn lemma.
One may not remove this $\epsilon$ in general for nonselfadjoint algebras. 
 However if the two projections
involved lie in the second dual of $A$, then the situation 
is much better: one may remove the $\epsilon$ and obtain a much stronger 
noncommutative Urysohn lemma.   
We define two variants of the notion of a 
compact projection which we show are equivalent, one in terms of the set $\{ a \in A : \Vert 1 - 2 a \Vert 
\leq 1 \}$.  The latter set, which is written as $\frac{1}{2} \mathfrak{F}_A$,  
was shown in \cite{BRead,BN} to often play the role of the 
positive cone in a $C^*$-algebra, and so 
we imagine that in future it will sometimes be important to 
have the $\frac{1}{2} \mathfrak{F}_A$ formulation.    
In Section 3 we  consider the nonunital case of Hay's {\em peak projections}.
Our main result is that compact projections are the decreasing limits
of  
`peak projections', and in the separable case are just the peak projections.
(There is a point of possible confusion here:
if $A$ is nonunital then our main  result is quite different from 
\cite[Corollary 2.21]{BRead}, although the latter is one of
the ingredients of our result.  This is because our `peak projections' are 
different (see the last remark in Section 3 below).   In the unital 
case the results essentially coincide, but in the present paper
the nonunital case is the only case
that is new and of interest (since 
in the unital case compactness is the same as being closed).
One may of course use \cite[Corollary 2.21]{BRead} to get a similar sounding
statement in terms of the unitization $A^1$, but we do not see
how to connect this statement easily to our main result here.)   
  We  prove in Section 4 that a projection in $A^{**}$ is compact
iff the associated face in the state space $S(A)$ of the algebra is weak* closed.   
Thus compact projections correspond to certain weak* closed faces of 
 $S(A)$.  In Section 5 we make some remarks on pure states and minimal 
projections.

As we have said, in our paper we are generalizing, and are inspired by,
 results from $C^*$-algebra theory and their proofs.  However 
for more general algebras there are significant obstacles
to be overcome, some of which use deep results from earlier papers
of ours and our coauthors.  
Although we use nothing essential from JB$^*$-triple theory in the present paper,
we mention that many results in our paper have matching counterparts
with quite different proofs in that theory (see our bibliography 
below).  In particular,
\cite{FP} contains  JB$^*$-triple Urysohn lemmas (see
also e.g.\ \cite{FP1,BN0}), which are considerably deeper than the $C^*$-algebra
variants.
 
Throughout this paper, $A$ is a fixed operator algebra, and $B$ is
a $C^*$-algebra that contains it.  We will also assume throughout that 
$A$ is {\em approximately unital},
that is it has a
contractive approximate
identity (cai), although this probably is not necessary for many of the results.
As we said above we are usually {\em not} interested in the case that $A$ is unital.
  We will  assume sometimes 
(in Sections 4 and 5 particularly) that the cai for $A$ is also a cai for $B$;
this is automatic by \cite[Lemma 2.1.6]{BLM} if $B$ is generated by $A$ 
(that is there is no proper $C^*$-subalgebra of $B$ containing $A$).
  
We now discuss notation and background, for which it will be helpful
for the reader to have easy access to several of the references, particularly 
\cite{BLM,BHN,BRead}, for more details beyond what is presented here.  
For us a {\em projection}
is always an orthogonal projection.
We recall that by a theorem due to  Ralf Meyer,
every operator algebra $A$ has a unique unitization $A^1$ (see
e.g.\ \cite[Section 2.1]{BLM}). Below $1$ always refers to
the identity of $A^1$ if $A$ has no identity. 
If $A$ is a nonunital
operator algebra represented (completely) isometrically on a Hilbert
space $H$ then one may identify the unitization $A^1$ with the 
algebra $A + \Cdb I_H$.   The
second dual $A^{**}$ is also an operator algebra with its (unique)
Arens product, this is the product inherited from the von Neumann
algebra $B^{**}$ if
$A$ is a subalgebra of a $C^*$-algebra $B$.  Meets and joins in
$B^{**}$ of projections in $A^{**}$ remain in $A^{**}$,
since these meets and joins may be computed in the biggest
von Neumann algebra contained inside $A^{**}$. Note that
$A$ has a cai iff $A^{**}$ has an identity $1_{A^{**}}$ of norm $1$,
and then $A^1$ is sometimes identified with $A + \Cdb 1_{A^{**}}$.
If $A$ has a cai, then a {\em state} of $A$ is a functional $\varphi
\in {\rm Ball}(A^*)$ with $\varphi(e_t) \to 1$, for some (or every)
cai $(e_t)$ for $A$.  We write $S(A)$ for the space of states.  States extend uniquely to states on the unitization $A^1$
(see \cite[2.1.19]{BLM}).
If $B$ is a  $C^*$-algebra
generated by $A$,
 then it is known that any bounded approximate identity
(bai) for $A$ is a bai for
$C^*(A)$, and hence states of $A$ are precisely the restrictions to
$A$ of states on $C^*(A)$ (see \cite[2.1.19]{BLM}).   It follows
that  the {\em quasistate} space
 $Q(A)$ is weak* compact, where $Q(A) = \{ t \varphi : t \in [0,1],
\varphi \in S(A) \}$.  Indeed if $\varphi_t \in S(B), c_t \in [0,1],
\varphi \in A^{**},$
and $c_t \varphi_t \to \varphi$ weak* on $A$, then there is a $\psi \in Q(B)$
and  convergent subnets
$c_{t_\mu} \to c \in [0,1], \varphi_{t_\mu} \to \psi$ weak* on $B$.  This forces
$\varphi = c \psi \in Q(A)$.  So $Q(A)$ is weak* compact.    

A {\em hereditary subalgebra} (HSA) of $A$ is an
subalgebra $D$ which has a cai and satisfies $DAD \subset D$.   For the theory of HSA's in general operator algebras
see \cite{BHN}.  These objects are in an order preserving,
bijective correspondence with the {\em open projections} $p \in A^{**}$, by which we mean that there
is a net $x_t \in A$ with $x_t = p x_t p \to p$ weak*.  These are
also the open projections $p$ in the sense of Akemann 
\cite{Ake,Ake2} in $B^{**}$, where $B$ is a $C^*$-algebra containing $A$, such that
$p \in A^{\perp \perp}$.  Indeed the weak* limit of a cai for a HSA is
an open projection, and is called the {\em support projection} of the HSA.
Conversely, if $p$ is an open projection in $A^{**}$, then
 $pA^{**}p \cap A$ is a  HSA in $A$.  If an approximately unital
operator algebra $A$ is viewed as a HSA in its unitization, then
we will write its support projection as $e$, or as $1$ if there is no danger of confusion.   

We recall that a {\em closed projection} is the `perp' of 
an open projection.   Suprema (resp.\ infima) in $B^{**}$ of open
(resp.\ closed) projections  in $A^{**}$, remain in $A^{**}$,
by the fact mentioned two paragraphs earlier about meets and joins,
together with the $C^*$-algebraic case of these facts \cite{Ake,Ake2}.

We will use the notation $\mathfrak{F}_A$ for 
$\{ a \in A  :
\Vert 1 - a \Vert \leq 1 \}$, and 
$\frac{1}{2} \mathfrak{F}_A$ for $\{ a \in A :
\Vert 1 - 2a \Vert \leq 1 \}$, a subset of Ball$(A)$.
In \cite{BRead} it is proved that elements in
$\mathfrak{F}_A$ (resp.\ $\frac{1}{2} \mathfrak{F}_A$)
have $n$th roots for all $n \in \Ndb$,
which are again in $\mathfrak{F}_A$ (resp.\ $\frac{1}{2} \mathfrak{F}_A$).  If $a \in 
\mathfrak{F}_A$, then $(a^{\frac{1}{n}})$ converges
weak* to an open projection which is written as
 $s(a)$, and this is both the left
and the right support projection of $a$ (see \cite[Section 2]{BRead}).
In this case $\overline{aAa}$ is a HSA of $A$, and the
 support projection of this HSA is $s(a)$,
 so $\overline{aAa} = s(a) A^{**} s(a)
 \cap A$.

We recall that a {\em tripotent} is an element $u$ with
$u u^* u = u$.   We order tripotents by $u \leq v$ iff
$u u^* v = u$.    If $x  \in {\rm Ball}(B)$,  define $u(x)$ to
be the weak* limit of the sequence $(x (x^{*}x)^n)$ in $B^{**}$.
This is the largest tripotent in $B^{**}$ satisfying
$v v^* x = v$ (see \cite{ER4}).  It is well known that if $\psi \in {\rm Ball}(A^*)$, then
$\psi(x) = 1$  iff $\psi(u(x))  = 1$ (see
e.g.\ \cite[Lemma 3.3 (i)]{ER4}).  In fact this is
easy to prove  in a few lines using spectral theory.  
The following result, essentially due to Edwards and R\"uttimann,
 has been used elsewhere in our work, and is no doubt 
very well known (cf.\ e.g.\ \cite[Proposition 5.5]{Hay}).

\begin{proposition} \label{uconv} Suppose $x$ and $y$ lie in the unit ball of a
 $C^{*}$-algebra $B$. Then $u(\frac{x+y}{2})=u(x)\wedge u(y)$ in $B^{**}$
(that is, $u(\frac{x+y}{2})$ is the largest tripotent in $B^{**}$ dominated by
both $u(x)$ and $u(y)$ in the ordering of tripotents above).
\end{proposition}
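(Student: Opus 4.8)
The plan is to prove the two inequalities $u(z)\le u(x)\wedge u(y)$ and $u(x)\wedge u(y)\le u(z)$, where $z=\tfrac12(x+y)$ and $v:=u(z)$, using throughout the tripotent identities $ww^*w=w$ (hence $w^*ww^*=w^*$) and the stated characterization of $u(a)$ as the \emph{largest} tripotent $w$ in $B^{**}$ with $ww^*a=w$. Note $\|z\|\le 1$, so $v=u(z)$ makes sense.

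For $v\le u(x)\wedge u(y)$: from the defining relation $vv^*z=v$, left multiplication by $v^*$ gives $v^*z=v^*v=:e$, a projection, so $v^*x+v^*y=2e$. Since $ev^*=v^*$, the contractions $v^*x,v^*y$ are fixed on the left by $e$; multiplying the relation on the right by $e$ puts $(v^*x)e$ and $(v^*y)e$ in the unital $C^*$-algebra $eB^{**}e$ (with unit $e$), where their average is $e$. Since an average of two contractions equal to the unit of a unital $C^*$-algebra forces both to equal that unit, $(v^*x)e=(v^*y)e=e$. A short positivity argument upgrades this to $v^*x=v^*y=e$: writing $c=v^*x-e$ one has $ce=0$ and $ec=c$, so $(v^*x)(v^*x)^*=e+cc^*$ with $cc^*\ge 0$ in $eB^{**}e$, and $\|v^*x\|\le 1$ forces $cc^*=0$. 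Then $vv^*x=vv^*v=v$ and likewise $vv^*y=v$, so by maximality of $u(x)$ and of $u(y)$ we conclude $v\le u(x)$ and $v\le u(y)$, hence $v\le u(x)\wedge u(y)$.

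For $u(x)\wedge u(y)\le v$: put $w=u(x)\wedge u(y)$, a tripotent with $w\le u(x)$ and $w\le u(y)$. Fix $a\in\{x,y\}$, and set $p=ww^*$, $q=u(a)u(a)^*$. From $w\le u(a)$, i.e. $ww^*u(a)=w$, we get $pqp=(ww^*u(a))(ww^*u(a))^*=ww^*=p$, hence $p(1-q)p=0$ and so $p\le q$, that is $ww^*(1-u(a)u(a)^*)=0$. Combining this with the defining relation $u(a)u(a)^*a=u(a)$,
\[ ww^*a=ww^*u(a)u(a)^*a+ww^*(1-u(a)u(a)^*)a=ww^*u(a)+0=w. \]
Applying this for $a=x$ and $a=y$ yields $ww^*z=\tfrac12(ww^*x+ww^*y)=w$, so $w$ is a tripotent with $ww^*z=w$, and maximality of $u(z)=v$ gives $w\le v$. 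Together with the previous paragraph, $v=w$.

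The main obstacle I anticipate is the step in the second paragraph that promotes $v^*z=v^*v$ to $v^*x=v^*v$: one cannot simply argue that "a contraction equal to a projection plus a remainder must equal the projection", since $v^*x$ need not be self-adjoint, and one is forced into the corner $eB^{**}e$ and the positivity computation. Everything else is routine manipulation with the tripotent identities and the maximality property of $u(a)$. (An alternative, less elementary, route bypasses the whole argument: by strict convexity, a norm-one $\psi\in B^*$ fixes $z$ iff it fixes both $x$ and $y$, hence, by the stated equivalence $\psi(a)=1\Leftrightarrow\psi(u(a))=1$, iff it fixes both $u(x)$ and $u(y)$; one then invokes the Edwards--R\"uttimann identification of tripotents in $B^{**}$ with weak* closed faces of $\mathrm{Ball}(B^*)$, under which the meet of tripotents corresponds to the intersection of the associated faces. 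The direct argument above, however, seems shorter and self-contained.)
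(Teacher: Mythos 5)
Your proof is correct, but it follows a genuinely different route from the paper. The paper's proof is essentially the alternative you sketch in your closing parenthesis: it invokes the Edwards--R\"uttimann order isomorphism (Theorem 4.4 of their JBW$^*$-triple paper) between tripotents in $B^{**}$ and certain weak* closed faces $\{u\}_{\prime}$ of ${\rm Ball}(B^*)$, together with the fact that $\psi(x)=1$ iff $\psi(u(x))=1$, and the elementary observation that $\psi(\frac{x+y}{2})=1$ forces $\psi(x)=\psi(y)=1$ (this last step is not ``strict convexity'' of $B$, which fails for $C^*$-algebras, but extremality of $1$ in the closed unit disc of $\Cdb$ applied to the scalars $\psi(x),\psi(y)$). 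Your main argument instead works entirely inside $B^{**}$ with the tripotent identities, the stated characterization of $u(a)$ as the largest tripotent $w$ with $ww^*a=w$, extremality of the unit of the corner $eB^{**}e$, and the positivity computation $(v^*x)(v^*x)^*=e+cc^*$; all the steps check out ($v^*z=v^*v$, the passage to the corner, $ec^*=(ce)^*=0$, $\Vert e+cc^*\Vert=1+\Vert cc^*\Vert$, and in the reverse direction $pqp=p\Rightarrow p\le q$ and $u(a)u(a)^*a=u(a)$). What your approach buys is a self-contained, purely algebraic proof that does not rely on the facial machinery, and which in fact establishes en route that the meet exists: note that your second paragraph should be read as ``let $w$ be \emph{any} tripotent with $w\le u(x)$ and $w\le u(y)$'' (writing $w=u(x)\wedge u(y)$ at the outset tacitly presupposes the meet exists, which is part of what is being proved); since your computation shows every such $w$ satisfies $w\le u(z)$, combined with the first paragraph it exhibits $u(z)$ as the largest tripotent dominated by both, which is exactly the assertion. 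What the paper's approach buys is brevity given the Edwards--R\"uttimann correspondence, which is in any case needed later (the facial results of Section 4), so the authors get the proposition essentially for free from machinery already in play.
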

\begin{proof}
In \cite[Theorem 4.4]{ERF} it is proved that
there is an order isomorphism from the
set of tripotents in $W = B^{**}$,
onto a certain set
 of closed faces of ${\rm Ball}(B^*)$.   This mapping
takes a tripotent $u$ to the face
$\{ u \}_{\prime}$ defined by $\{ \varphi \in {\rm Ball}(B^*) \, : \,   \varphi(u) = 1 \}$.

By the remark above the proposition,
 $\{u(\frac{x+y}{2})\}_{\prime}$
is the set of $\psi \in {\rm Ball}(A^*)$  with
$\psi(\frac{x+y}{2}) = 1$.  But this clearly happens iff
$\psi(x) = \psi(y) = 1$; that is iff $\psi \in \{u(x)\}_{\prime} \cap  \{u(y)\}_{\prime}$.The latter equals $\{u(x) \wedge u(y)\}_{\prime}$,
because of the order isomorphism mentioned in the last paragraph.
Hence $\{u(\frac{x+y}{2})\}_{\prime} = \{u(x) \wedge u(y)\}_{\prime}$,
and so $u(\frac{x+y}{2})=u(x)\wedge u(y)$ because of the 
isomorphism in the last paragraph again.
\end{proof}

\section{Compact projections and the Urysohn lemma} 

We recall again that  throughout  $A$ is an
 operator algebra which is 
  approximately unital, although as we said
this probably is not necessary for many of the results.
Also, $B$ is a $C^*$-algebra containing $A$.  
We will say that 
a closed projection $q \in A^{**}$ is {\em compact} in $A^{**}$ if there exists 
$a \in {\rm Ball}(A)$  with $q = qa$.    We say that such $q$ is ${\mathfrak F}$-{\em compact} in $A^{**}$ if the $a$ here may be chosen in $\frac{1}{2} {\mathfrak F}_A$.
We will prove below that  every compact projection is ${\mathfrak F}$-compact.
 
Any  compact projection $q$ in  $A^{**}$  is a 
compact projection in $B^{**}$, since in this case
it is easy to argue from elementary operator 
theory that we have $q = q a^* = q a a^*$, for $a$ as above.  Clearly 
any closed projection is compact in $A^{**}$ if $A$ is unital.  Any
closed projection dominated by a compact projection in $A^{**}$ is 
compact.    If $q$ is a  projection in $A^{**}$, and $q b = q$
for some $b \in  {\rm Ball}(A)$, then
$q u(b) = q$, and $q \leq u(b)$.    Here  $u(b)$ is the tripotent mentioned in the 
introduction, namely  the weak* limit of $b (b^* b)^n$.  However $u(b)$ may not be a
projection; soon we will be able to rechoose $b$ so that it is.
  If $q$ is a compact projection,
and if $(e_t)$ is a cai for the HSA supported by $e-q$ (where $e = 1_{A^{**}}$),
then $e - e_t \to q$ weak*.  Let $y_t =  b
 (e - e_t)
\in A$.  Then $y_t \to b (e - (e-q))  = 
b q = q$.  
We have $y_t \in {\rm Ball}(A)$, and $y_t q = b (q - e_t q) 
= b q  = q$.

\begin{theorem} \label{peakch} {\rm (A noncommutative Urysohn lemma for
approximately unital operator algebras;
c.f.\ Theorem 2.24 in \cite{BRead}.) } \ Let $A$ be an
approximately unital operator algebra, a subalgebra of a $C^*$-algebra $B$, 
and let
$q$ be a compact projection  in $A^{**}$.  Then
 for any open
projection $u \in B^{**}$ with $u \geq q$, and any $\epsilon > 0$,
there exists an $a \in {\rm Ball}(A)$ with
$a q = q$ and $\Vert a (1-u) \Vert < \epsilon$
and $\Vert (1-u)  a \Vert < \epsilon$.
\end{theorem}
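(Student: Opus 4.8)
The plan is to reduce the statement to the unital noncommutative Urysohn lemma (the unital case of \cite[Theorem 2.24]{BRead}) by passing to the unitization $A^1$, and then to push the resulting element of $A^1$ back into $A$ using the element that witnesses compactness of $q$. Since $q$ is compact in $A^{**}$, fix $b \in {\rm Ball}(A)$ with $qb = q$; as in the discussion preceding the theorem, a routine operator-theoretic argument then also gives $bq = q$ and $b^* q = q b^* = q$ (so $q \leq u(b)$). Write $e = 1_{A^{**}}$ and let $(e_t)$ be a cai for the HSA of $A$ supporting $e - q$, so that $e_t = (e-q) e_t (e-q)$ and hence $q e_t = e_t q = 0$; then $y_t := b(e - e_t) \in {\rm Ball}(A)$ satisfies $y_t \to q$ weak* and $q y_t = y_t q = q$. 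A one-line computation now gives $(1-q)(1 - y_t)(1-q) = 1 - y_t$, and since $1 - y_t \in A^1$ with $1 - y_t \to 1 - q$ weak*, this exhibits $1 - q$ as an open projection of $(A^1)^{**}$; equivalently, $q$ is a closed (hence, $A^1$ being unital, compact) projection in $(A^1)^{**}$.

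Now $u$ remains open in $(B^1)^{**}$, we have $A^1 \subseteq B^1$, and $q \leq u$ there, so the unital Urysohn lemma applied to $A^1 \subseteq B^1$ yields $a_1 \in {\rm Ball}(A^1)$ with $a_1 q = q$, $\Vert a_1 (1 - u) \Vert < \epsilon$ and $\Vert (1-u) a_1 \Vert < \epsilon$. Since $a_1$ need only lie in $A^1$, I would finally set $a := a_1 \, b \, a_1$. Because $A$ is a closed two-sided ideal of $A^1$ we have $a \in A$, with $\Vert a \Vert \leq 1$; using $a_1 q = q$ and $b q = q$ we get $a q = a_1 b (a_1 q) = a_1 b q = a_1 q = q$; and
\[ \Vert a (1-u) \Vert = \Vert (a_1 b)\, a_1 (1-u) \Vert \leq \Vert a_1 (1-u) \Vert < \epsilon , \]
while symmetrically $\Vert (1-u) a \Vert = \Vert (1-u) a_1 \, (b a_1) \Vert \leq \Vert (1-u) a_1 \Vert < \epsilon$. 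The point of the two-sided sandwich $a_1 b a_1$, as opposed to a one-sided product $a_1 b$ or $b a_1$, is that it keeps $aq = q$ while letting one copy of $a_1$ absorb the factor $1-u$ on each side; a one-sided product would preserve $aq=q$ but ruin one of the two norm bounds, since we have no control on $\Vert b(1-u) \Vert$ or $\Vert (1-u) b \Vert$.

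The main obstacle is thus entirely contained in the unital case \cite[Theorem 2.24]{BRead}: its proof runs through Akemann's $C^*$-algebraic noncommutative Urysohn lemma \cite{Ake} for the projections $q$ and $u$ inside $(B^1)^{**}$, followed by an approximation transporting a (selfadjoint) $C^*$-Urysohn function from $B^1$ into $A^1$, and it is exactly this transport that forces the appearance of $\epsilon$ in the nonselfadjoint setting. The only genuinely new ingredients for the nonunital statement are the two reductions above — that a compact projection of $A^{**}$ is closed in $(A^1)^{**}$, and the sandwich $a_1 \mapsto a_1 b a_1$ returning from $A^1$ to $A$ while preserving both the peaking condition $aq=q$ and the two estimates. (Should \cite[Theorem 2.24]{BRead} only be available with a one-sided estimate, the other side follows from the selfadjointness of the underlying $C^*$-Urysohn function.)
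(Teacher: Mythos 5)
Your argument is correct, but it is organized differently from the paper's. The paper does not pass through the unital theorem as a black box: it takes the net $y_t = b(e-e_t)$ (which, as you also note, satisfies $y_tq=q$ and $y_t\to q$ weak*), invokes Akemann's $C^*$-algebraic Urysohn lemma to get $x\in B$ with $q\le x\le u$, observes that $y_t(1-x)\to 0$ and $(1-x)y_t\to 0$ weakly, and then uses a convexity (Mazur-type) argument in $B\oplus B$ to replace the $y_t$ by a single convex combination $a$ with $\Vert a(1-x)\Vert, \Vert (1-x)a\Vert<\epsilon$; finally $a(1-u)=a(1-x)(1-u)$ gives the estimates. You instead prove (correctly, and essentially reproving the implication (iii)$\Rightarrow$(ii) of Theorem \ref{perun}, so no circularity) that $q$ is closed in $(A^1)^{**}$, cite the unital noncommutative Urysohn lemma for $A^1\subset B^1$ — this is really \cite[Theorem 6.4]{BHN} (two-sided estimates, $a_1q=q$), which is the result the paper's own proof models itself on, rather than the ``unital case'' of \cite[Theorem 2.24]{BRead}, so you should cite it as such — and then return to $A$ by the sandwich $a=a_1ba_1$; your verification that $qb=q$ forces $bq=q$ (the norm-one column argument) is exactly the ``elementary operator theory'' the paper alludes to, and the checks $aq=q$, $\Vert a(1-u)\Vert\le\Vert a_1(1-u)\Vert$, $\Vert(1-u)a\Vert\le\Vert(1-u)a_1\Vert$ are fine since $A$ is an ideal in $A^1$. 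What the two routes buy: yours is shorter once the unital theorem is granted, and the sandwich trick is a clean way to re-enter $A$; the paper's direct proof keeps $a$ a convex combination of the $y_t$, which is what makes the refinement discussed in the Closing Remark possible (if the $y_t$ can be chosen in $\frac{1}{2}\mathfrak{F}_A$, so can $a$), whereas your sandwich destroys that structure, since products of elements of $\frac{1}{2}\mathfrak{F}_{A^1}$ need not remain in $\frac{1}{2}\mathfrak{F}_{A^1}$.
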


 \begin{proof}
Let $q  \in A^{\perp \perp}$, let $u$ be an open
projection with $u \geq q$, and let $\epsilon > 0$ be given.  As above,
 let $(y_t)$ be a net in ${\rm Ball}(A)$ with $y_t q = q$ and $y_t \to q$ weak*.   
We follow
the idea in the last seven lines
of the proof of \cite[Theorem 6.4]{BHN} (see also \cite[Theorem 2.24]{BRead}):
By the noncommutative Urysohn lemma
\cite{Ake}, there is an $x \in B$ with
$q \leq x \leq u$.
Then  $y_t (1-x) \to q (1-x) = 0$ weak*, and hence weakly
in $B$.  Similarly, $(1-x) y_t \to  0$ weakly.
By a routine convexity argument in $B \oplus B$, given $\epsilon > 0$
there is a convex combination $a$ of the $y_t$
such that $\Vert a (1-x) \Vert < \epsilon$ and
$\Vert  (1-x) a \Vert < \epsilon$.  Clearly
 $a q = q$.  Therefore $\Vert a (1-u) \Vert =
\Vert a (1-x) (1-u)  \Vert < \epsilon$.
Similarly for $\Vert   (1-u) a \Vert  < \epsilon$.
 \end{proof}

For a $C^*$-algebra one may remove the $\epsilon$ in the last theorem, but this
is not necessarily true for more general algebras.   
 However we will see later that 
 things are much better, and the  noncommutative Urysohn lemma can be refined,
 if we assume that 
the two projections $u$ and $q$ involved lie in $A^{**}$.
  
\begin{theorem} \label{perun} If $A$ is an approximately unital  
operator algebra,  a subalgebra of a $C^*$-algebra $B$.  If $q$ is 
a projection in $A^{**}$ then the following  are equivalent:
 \begin{enumerate} \item [{\rm (i)}]  $q$ is
compact in $B^{**}$,    \item [{\rm (ii)}]  $q$ is a closed projection  in $(A^1)^{**}$,
\item [{\rm (iii)}]  
$q$ is
compact in $A^{**}$,
  \item [{\rm (iv)}]  $q$ is
${\mathfrak F}$-compact in $A^{**}$. \end{enumerate}
\end{theorem}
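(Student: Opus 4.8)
The plan is to close the circle (i) $\Rightarrow$ (ii) $\Rightarrow$ (iv) $\Rightarrow$ (iii) $\Rightarrow$ (i). Two of the links are essentially free: (iv) $\Rightarrow$ (iii) holds because $\frac{1}{2}\mathfrak{F}_A \subseteq {\rm Ball}(A)$, and (iii) $\Rightarrow$ (i) is precisely the observation recorded in the paragraph just before the theorem (if $qa=q$ with $a \in {\rm Ball}(A)$ then $q = qa^* = qaa^*$, with $aa^* \in {\rm Ball}(B)$). So the real work is in (i) $\Rightarrow$ (ii) and, especially, in (ii) $\Rightarrow$ (iv).

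For (i) $\Rightarrow$ (ii), recall from Akemann's theory that $q$ is compact in $B^{**}$ exactly when $1-q$ (complement relative to the unit of $B^1$) is an open projection in $(B^1)^{**}$. View $A^1$ as an operator subalgebra of the $C^*$-algebra $B^1$. Since $q \in A^{**} \subseteq (A^1)^{**} = (A^1)^{\perp\perp}$, also $1-q \in (A^1)^{\perp\perp}$, so Hay's theorem (\cite{Hay,BHN}), applied to the inclusion $A^1 \subseteq B^1$, transfers openness downward: $1-q$ open in $(B^1)^{**}$ forces $1-q$ open in $(A^1)^{**}$, i.e.\ $q$ is a closed projection in $(A^1)^{**}$. (This argument is reversible and also yields (ii) $\Rightarrow$ (i), which is comforting though not logically needed.)

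For (ii) $\Rightarrow$ (iv) --- the heart of the matter --- observe that $A^1$ is unital, so the closed projection $q$ is automatically compact in $(A^1)^{**}$, and the unital case of the peak-projection/Urysohn theory of \cite{BRead} (e.g.\ \cite[Theorem 2.24, Corollary 2.21]{BRead}) produces an $a \in \frac{1}{2}\mathfrak{F}_{A^1}$ with $qa=q$. The point is to promote $a$ into $A$ itself, equivalently (writing $A^1 = A + \Cdb 1$) to kill the scalar coefficient $\chi(a)$, where $\chi$ is the character of $A^1$ vanishing on $A$. Here one exploits that $q \in A^{\perp\perp}$: with $e = 1_{A^{**}}$, the central ``character'' projection $1-e$ of $(A^1)^{**}$ is closed and orthogonal to $q$, and $e$ is an open projection in $(A^1)^{**}$ with $e \geq q$. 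Running the unital Urysohn construction of \cite{BRead} against the open projection $e$ (so the element is supported under $e$) --- equivalently, separating the compact $q$ from the closed projection $1-e$ --- gives $a \in \frac{1}{2}\mathfrak{F}_{A^1}$ with $qa=q$ and $a(1-e)=0$; but $a(1-e) = \chi(a)(1-e)$, so $a(1-e)=0$ says exactly $\chi(a)=0$, i.e.\ $a \in A$, whence $a \in \frac{1}{2}\mathfrak{F}_{A^1} \cap A = \frac{1}{2}\mathfrak{F}_A$. This establishes (iv), and the circle closes.

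I expect the de-unitization step in (ii) $\Rightarrow$ (iv) to be the only genuine obstacle. The naive attempt --- take any witness $a_1 = \lambda 1 + b \in \frac{1}{2}\mathfrak{F}_{A^1}$ of compactness in $(A^1)^{**}$ and pass to $b/(1-\lambda) \in A$ --- fails, since the rescaled element need not even be contractive, let alone in $\frac{1}{2}\mathfrak{F}_A$; one truly has to arrange the witness to vanish on the character summand from the start. In executing this I would also record the auxiliary fact that $e-q$ is open in $A^{**}$ (from (i): when $B$ is generated by $A$ one has $e = 1_{B^{**}}$, so $e-q$ is open in $B^{**}$, and then $e-q \in A^{\perp\perp}$ lets Hay's theorem bring it down to $A^{**}$), so that, should a more hands-on construction of $a$ be required, one may work inside the hereditary subalgebra of $A$ supported by $e-q$ and its $\frac{1}{2}\mathfrak{F}$-cai. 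Finally one should pin down the exact unital statement being quoted from \cite{BRead} and, if it carries an $\varepsilon$, recover the sharp ``$a(1-e)=0$'' conclusion by a limiting/convexity argument in the spirit of Proposition \ref{uconv}.
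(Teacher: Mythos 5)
Your easy links are fine and essentially match the paper: (iv) $\Rightarrow$ (iii) is trivial, and your (i) $\Leftrightarrow$ (ii), via Akemann's characterization (compact in $B^{**}$ iff $1-q$ open in $(B^1)^{**}$) together with \cite[Theorem 2.4]{BHN} applied to $A^1 \subseteq B^1$, is exactly the paper's argument. One caution on your (iii) $\Rightarrow$ (i): the remark you quote only yields $q \leq aa^*$ with $aa^* \in {\rm Ball}(B)_+$; to conclude compactness in $B^{**}$ one still needs $q$ closed in $(B^1)^{**}$, and the honest proof of that is the paper's (iii) $\Rightarrow$ (ii) step: with $(e_t)$ a cai for the HSA supported by $e-q$ and $y_t = b(e-e_t)$ one gets $y_t q = q y_t = q$ and $y_t \to q$ weak*, so $1-y_t = (1-q)(1-y_t)(1-q) \to 1-q$ weak*, whence $1-q$ is open in $(A^1)^{**}$.

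The genuine gap is in (ii) $\Rightarrow$ (iv), which you correctly identify as the heart. You need $a \in \frac{1}{2}\mathfrak{F}_{A^1}$ with $qa=q$ and \emph{exactly} $a(1-e)=0$, but the unital results of \cite{BRead} you invoke do not supply this: the relevant Urysohn statement there (like Theorem \ref{peakch} here) carries an $\epsilon$, giving only $\Vert a(1-e)\Vert < \epsilon$, and your fallback --- ``a limiting/convexity argument in the spirit of Proposition \ref{uconv}'' --- cannot upgrade it. A convex combination of elements $a_i$ with $qa_i=q$ and $\Vert a_i(1-e)\Vert$ small stays in $\frac{1}{2}\mathfrak{F}_{A^1}$ and fixes $q$, but its product with $1-e$ is only as small as before, never zero; and a weak* limit exits $A^1$. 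Making the element vanish exactly at the character projection $1-e$ while fixing $q$ is a lifting/peak-interpolation phenomenon, and this is precisely where the paper does its real work: it takes the HSA $C = q^\perp (A^1)^{**} q^\perp \cap A^1$ supported by $q^\perp$, notes that $f = 1-e$ is a central minimal projection in $C^{**}$ and that the HSA $D$ supported by $e-q$ is an approximately unital ideal in $C$ with $C/D \cong \Cdb f$, and then applies the lifting result \cite[Proposition 6.1]{BRead} to produce $d \in \mathfrak{F}_C$ with $df = 2f$; setting $b = d/2$ and $a = 1-b$ gives $a \in A$ (since $a(1-e)=0$), $a \in \frac{1}{2}\mathfrak{F}_A$, and $aq = qa = q$. (In the $C^*$-case this lifting is the fact that positive elements of a quotient lift with the same norm, as the paper remarks --- it is not a convexity fact.) Without \cite[Proposition 6.1]{BRead} or some equivalent exact-interpolation device your de-unitization step does not go through; nor can you quote Theorem \ref{bncu} of the present paper, whose proof rests on the very theorem being proved.
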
  \begin{proof}   We may assume that $A$ is nonunital, and that
$1_{A^1} = 1_{B^1}$.  

 {\rm (iv)} $\Rightarrow$
{\rm (iii)} \ Obvious.  

{\rm (iii)}  $\Rightarrow$ {\rm (ii)} \  If $q$ is a compact projection in $A^{**}$, then
by the discussion above Theorem \ref{peakch},  there exists $y_t \in {\rm Ball}(A)$
with $y_t \to q$, and $y_t q = q$.  Then $1-y_t \to 1-q$, and 
$(1-y_t)(1-q) = 1-y_t$, so $1-q$ is
open in  $(B^1)^{**}$, or equivalently is open in $(A^1)^{**}$ (by 
\cite[Theorem 2.4]{BHN}).   Hence $q$ is closed in $(A^1)^{**}$. 

(i) $\Leftrightarrow$ (ii) \  $q$ being 
 closed in $(A^1)^{**}$, or equivalently in  $(B^1)^{**}$ (by 
\cite[Theorem 2.4]{BHN}), 
is equivalent to $q$ being compact in $B^{**}$, by the $C^*$-algebra case.    
Note that this  implies that $q$ is closed  in $A^{**}$ since 
$e-q = e(1-q)$ is open in $(A^1)^{**}$ where  $e = 1_{A^{**}}$, being a
commuting product of open projections, hence
is open  in $A^{**}$.  

(ii)  $\Rightarrow$ (iv) \ 
Consider a projection $q \in A^{**}$ such that 
$q$ is closed in $(A^1)^{**}$.  Then $q^\perp$ is open in
$(A^1)^{**}$; let $C = q^\perp (A^1)^{**} q^\perp \cap A^1$, the HSA in $A^1$ with
support projection $q^\perp$.  Note that since $eq = q$ we have $(1-e) q^\perp = 1-e$,
and so $f = 1-e$ is a central minimal projection in $C^{**} = 
q^\perp (A^1)^{**} q^\perp$.    Let $D$ be the HSA in $A^1$ with support projection $e-q = e(1-q)$,
this is an approximately unital  ideal in $C$, indeed $D^{\perp \perp} = e  C^{**}$.  Note that $$C^{**}/D^{\perp \perp}
\cong C^{**} (q^\perp -  q^\perp e) =  C^{**} f = \Cdb f \cong \Cdb ,$$
The map implementing this isomorphism $C^{**}/D^{\perp \perp} \cong \Cdb f$ is the map
$x + D^{\perp \perp} \mapsto xf$.   Moreover, the map restricts to an
isometric  isomorphism $C/D \cong C/D^{\perp \perp} \cong \Cdb f$.  
This is because if the range of this restriction was not $\Cdb f$ then
it is $(0)$, so that $C = D$, which implies the contradiction $e = 1$. 
By \cite[Proposition 6.1]{BRead}, there is an element $d \in  {\mathfrak F}_C$ such that $df = 2f$.  If $b = d/2  \in  \frac{1}{2} {\mathfrak F}_C$, 
then $bf = f$.  We have that  $1-b \in \frac{1}{2} {\mathfrak F}_{A^1}$, and $(1-b) e = 1-b$, so 
$1-b \in \frac{1}{2} {\mathfrak F}_{A}$.  Moreover $(1-b)q = q$ since $b \in q^\perp (A^1)^{**} q^\perp$.   So $q$ is ${\mathfrak F}$-compact.
 \end{proof}  

From Theorem \ref{perun} (i) it is evident that compact projections in 
$A^{**}$ have many of the properties of Akemann's compact projections.
For example:

\begin{corollary} \label{ifcom} The infimum of any family of compact projections in 
$A^{**}$ is a compact projection in $A^{**}$. 
  Also, the supremum of two commuting compact projections 
 in
$A^{**}$ is a compact projection in $A^{**}$.
\end{corollary}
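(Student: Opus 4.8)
The plan is to reduce both assertions to the corresponding classical facts of Akemann for compact projections in the bidual of a $C^*$-algebra, via the equivalence (i)~$\Leftrightarrow$~(iii) of Theorem~\ref{perun}: a projection of $A^{**}$ is compact in $A^{**}$ exactly when it is compact in $B^{**}$.

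For the infimum, I would take a family $(q_i)$ of compact projections in $A^{**}$ and set $q = \bigwedge_i q_i$, the infimum formed in $B^{**}$. As recalled in the introduction, meets of projections lying in $A^{**}$ may be computed inside the largest von Neumann algebra contained in $A^{**}$, so $q \in A^{**}$. By Theorem~\ref{perun} each $q_i$ is compact in $B^{**}$, hence $q$ is compact in $B^{**}$ by the $C^*$-algebra case; and then Theorem~\ref{perun}, applied to the projection $q \in A^{**}$, gives that $q$ is compact in $A^{**}$.

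For the supremum, I would take commuting compact projections $q_1, q_2 \in A^{**}$ and observe that $q_1 \vee q_2 = q_1 + q_2 - q_1 q_2$ already lies in $A^{**}$, since $A^{**}$ is an algebra. By Theorem~\ref{perun} each $q_j$ is compact in $B^{**}$; the supremum of two commuting compact projections in $B^{**}$ is compact there by the $C^*$-algebra case (its perp is $(1-q_1)(1-q_2)$, a product of two commuting open projections of $(B^1)^{**}$, hence open). So $q_1 \vee q_2$ is compact in $B^{**}$, and Theorem~\ref{perun} again yields compactness in $A^{**}$.

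I do not expect a genuine obstacle here: Theorem~\ref{perun} does all the real work, and the rest is the bookkeeping of remaining inside $A^{**}$ plus a reference to Akemann's $C^*$-algebra results. The commuting hypothesis in the second statement is present only because the supremum of two arbitrary closed (hence of two arbitrary compact) projections in a $C^*$-bidual need not be closed, and the argument must sidestep that difficulty.
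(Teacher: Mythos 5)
Your proposal is correct and follows essentially the same route as the paper: both reduce the statement to Akemann's $C^*$-algebraic results via the equivalences in Theorem~\ref{perun} (you use compactness in $B^{**}$, the paper the equivalent formulation of closedness in $(B^1)^{**}$), after noting that the relevant meets and joins stay in $A^{**}$. The only cosmetic difference is that you write out the infimum case and the algebraic identity $q_1 \vee q_2 = q_1 + q_2 - q_1 q_2$ explicitly, where the paper appeals to its introductory remarks and proves only the supremum case, declaring the other similar.
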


 \begin{proof}   Notice that these infima and suprema may be viewed as 
infima and suprema of projections in $A^{**}$ or in $B^{**}$, 
by remarks in the Introduction (particularly the third last paragraph before
Proposition \ref{uconv}).  We prove only 
the second statement, the first being similar.
This supremum may be viewed by Theorem \ref{perun}
as the supremum of two commuting closed
projections in $(B^1)^{**}$, which is closed by Akemann's theory,
and hence is compact in $A^{**}$ by Theorem \ref{perun} again.      
\end{proof}

\begin{corollary} \label{ifer}   Let  $A$ be an approximately unital
operator algebra, with approximately unital closed subalgebra $D$.
A projection $q$ in $D^{\perp \perp}$ is compact in $D^{**}$ iff
$q$ is compact in $A^{**}$.
\end{corollary}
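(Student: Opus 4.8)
The plan is to reduce everything to Theorem \ref{perun}, which tells us that compactness (in either $D^{**}$ or $A^{**}$) is equivalent to being a closed projection in the second dual of the relevant unitization. So the real task is to compare closedness of $q$ in $(D^1)^{**}$ with closedness of $q$ in $(A^1)^{**}$; equivalently, by taking perps, to compare openness of $q^\perp$ in $(D^1)^{**}$ with openness of $q^\perp$ in $(A^1)^{**}$. The natural tool here is the transitivity of openness/closedness under passing to subalgebras: $q^\perp$ being open in $(D^1)^{**}$ should be equivalent to $q^\perp$ being open in $(A^1)^{**}$, given that $D^1$ sits inside $A^1$ as a (not necessarily unital-preserving, but containing the unit) subalgebra. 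One needs to choose the unitizations carefully: since $D$ and $A$ are both approximately unital, their supports $e_D = 1_{D^{**}}$ and $e_A = 1_{A^{**}}$ satisfy $e_D \le e_A$, and one may arrange $1_{D^1} = 1_{A^1} = 1_{B^1}$ so that all these second duals sit inside the same $(B^1)^{**}$.

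First I would record the forward direction, which is the easier one. Suppose $q$ is compact in $D^{**}$. By Theorem \ref{perun}, $q$ is compact in $B^{**}$, where we may take $B \supseteq A \supseteq D$ all inside a common $C^*$-algebra. But then, again by Theorem \ref{perun} applied to the pair $(A,B)$, compactness of $q$ in $B^{**}$ together with $q \in A^{\perp\perp}$ gives that $q$ is compact in $A^{**}$. The only thing to check is that $q \in A^{\perp\perp}$, which is immediate since $q \in D^{\perp\perp} \subseteq A^{\perp\perp}$, and that $q$ is compact in $B^{**}$ where $B$ is a $C^*$-algebra containing $A$ (not just $D$) — here one picks $B = C^*(A)$, which contains $C^*(D)$, and compactness in $C^*(D)^{**}$ passes up to compactness in $B^{**}$ by the $C^*$-algebra theory (a compact projection stays compact in a larger $C^*$-algebra, since it is dominated by an element of the smaller unit ball).

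For the converse, suppose $q$ is compact in $A^{**}$. By Theorem \ref{perun}, $q$ is closed in $(A^1)^{**}$, so $q^\perp$ is open in $(A^1)^{**}$. I want to deduce that $q^\perp$ is open in $(D^1)^{**}$. The subtlety is that $q^\perp = 1 - q$ may be "too big" for $D^{**}$: we have $q \le e_D$ (since $q$ is compact in $D^{**}$... no, we are only assuming compact in $A^{**}$, so we do NOT yet know $q \le e_D$). This is the main obstacle: we must first establish that $q \in D^{\perp\perp}$ — wait, that is a hypothesis. Good, so $q \le e_D$ holds by hypothesis. Then $q^\perp = 1 - q \ge 1 - e_D$, and the "relevant part" of $q^\perp$ for $D^1$ is $q^\perp$ itself viewed in $(D^1)^{**} \subseteq (A^1)^{**}$. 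Since $q \in D^{\perp\perp} \subseteq (D^1)^{\perp\perp}$, we have $q \in (D^1)^{**}$, hence $q^\perp \in (D^1)^{**}$. Now openness of $q^\perp$ in $(A^1)^{**}$ means there is a net $x_t \in A^1$ with $x_t = q^\perp x_t q^\perp \to q^\perp$ weak*; I would then compress: set $d_t = (1-e_D+e_D x_t e_D)$ or rather work inside the HSA of $A^1$ with support $q^\perp$ and intersect with $D^1$. The cleanest route is: $q^\perp$ is open in $(A^1)^{**}$ and $q^\perp \in (D^1)^{\perp\perp}$, so by \cite[Theorem 2.4]{BHN} (openness is intrinsic, independent of the containing algebra, here $(D^1)^{**} \subseteq (A^1)^{**}$) $q^\perp$ is open in $(D^1)^{**}$. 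Hence $q$ is closed in $(D^1)^{**}$, and by Theorem \ref{perun} applied to $D$, $q$ is compact in $D^{**}$.

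The step I expect to be the main obstacle is the careful bookkeeping of unitizations and the invocation of \cite[Theorem 2.4]{BHN} in the form "a projection lying in the bidual of a closed approximately unital subalgebra is open there iff it is open in the bidual of the larger algebra." One must make sure the hypotheses of that theorem are met — in particular that $D^1$ (or $D$) is the right kind of subalgebra of $A^1$ (or $A$), sharing the same unit, and that the projection in question genuinely lies in the smaller bidual, which is exactly where the hypothesis $q \in D^{\perp\perp}$ is used. Everything else is a routine chase through Theorem \ref{perun} and the $C^*$-algebra facts about compact projections in nested $C^*$-algebras.
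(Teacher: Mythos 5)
Your argument is correct, and both directions ultimately rest on Theorem \ref{perun}, as in the paper; but the paper's proof is shorter and avoids all of the unitization bookkeeping you flag as the main obstacle. The paper simply observes that the one containing $C^*$-algebra $B$ works for both algebras: since $D\subseteq A\subseteq B$ and $q\in D^{\perp\perp}\subseteq A^{\perp\perp}$, Theorem \ref{perun} applied to the pair $(A,B)$ gives ``$q$ compact in $A^{**}$ iff compact in $B^{**}$'', and applied to the pair $(D,B)$ gives ``$q$ compact in $B^{**}$ iff compact in $D^{**}$''; chaining these settles both directions in two lines, with no appeal to $(A^1)^{**}$, $(D^1)^{**}$ or to \cite[Theorem 2.4]{BHN} at this level. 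Your forward direction is essentially this, though the detour through $C^*(D)\subseteq C^*(A)$ and the $C^*$-permanence of compactness is unnecessary --- Theorem \ref{perun} applies directly to $(D,B)$. Your converse instead uses characterization (ii) of Theorem \ref{perun} (closedness in the unitization) together with the principle that openness of $q^\perp$ passes from $(A^1)^{**}$ down to $(D^1)^{**}$; this is valid, but note that \cite[Theorem 2.4]{BHN} compares an operator algebra with a \emph{containing $C^*$-algebra}, so your use of it between the two nonselfadjoint algebras $D^1\subseteq A^1$ should strictly be chained through $(B^1)^{**}$ (open in $(A^1)^{**}$ iff open in $(B^1)^{**}$ iff open in $(D^1)^{**}$, the last step using $q^\perp\in(D^1)^{\perp\perp}$) --- which is in effect just re-deriving the (i)$\Leftrightarrow$(ii) step of Theorem \ref{perun} that the paper's route uses in packaged form. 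So what your approach buys is an explicit picture of why the result holds (openness/closedness is intrinsic), at the cost of redoing work already encapsulated in Theorem \ref{perun}.
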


\begin{proof}  One direction is obvious.
For the other, suppose that $q$ is compact in $A^{**}$.
Then $q$ is compact in $B^{**}$ by Theorem \ref{perun},
hence compact in $D^{**}$ by Theorem \ref{perun} again, since $D \subset B$.
  \end{proof}

The following is the analogue of \cite[Lemma 2.5]{AAP}.

\begin{corollary} \label{l25} Let  $A$ be an approximately unital
operator algebra.  If a projection $q$ in $A^{**}$ is dominated by an
open projection $p$ in $A^{**}$, then
$q$ is compact in $p A^{**} p$ (viewed as the
second dual of the HSA supported by $p$), iff $q$ is compact in $A^{**}$.
\end{corollary}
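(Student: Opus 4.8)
The plan is to obtain this as an immediate consequence of Corollary \ref{ifer}. First I would set $D = p A^{**} p \cap A$, the HSA in $A$ with support projection $p$, and recall from the Introduction (and \cite{BHN}) the two facts that make the reduction work: $D$ is an approximately unital closed subalgebra of $A$ (it has a cai by the definition of HSA, and it is norm closed since $p A^{**} p$ is weak* closed, hence norm closed, in $A^{**}$, while $A$ is norm closed), and its bidual $D^{**}$ is identified with the corner $p A^{**} p = D^{\perp \perp}$ of $A^{**}$. Since $q \leq p$ gives $q = pqp \in p A^{**} p$, the projection $q$ lies in $D^{\perp \perp}$, and the phrase ``$q$ is compact in $p A^{**} p$'' is literally the statement ``$q$ is compact in $D^{**}$''.

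With these identifications in hand, the corollary is exactly Corollary \ref{ifer} applied to the inclusion $D \subseteq A$: a projection $q \in D^{\perp \perp}$ is compact in $D^{**}$ if and only if $q$ is compact in $A^{**}$. Recall that in that corollary one direction is the trivial inclusion --- an $a \in {\rm Ball}(D)$ with $q = qa$ also witnesses compactness of $q$ in $A^{**}$, since ${\rm Ball}(D) \subseteq {\rm Ball}(A)$ --- while the converse passes through the containing $C^*$-algebra $B$, using $B \supseteq A \supseteq D$ together with Theorem \ref{perun} (i.e., the characterization of compactness in $A^{**}$ and in $D^{**}$ via compactness in $B^{**}$).

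I do not expect any genuine obstacle here: the entire content is the bookkeeping described in the first paragraph, namely verifying that the HSA $D$ supported by $p$ really does satisfy the hypotheses of Corollary \ref{ifer} and that $D^{**} = p A^{**} p$ under the canonical identification of the bidual of a HSA with the corresponding corner of $A^{**}$. Once that is noted, there is nothing further to prove, and the statement is in effect merely a rewording of Corollary \ref{ifer} in the language of corners.
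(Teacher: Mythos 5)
Your proposal is correct and is exactly the argument the paper intends: the corollary is stated without proof precisely because it is Corollary \ref{ifer} applied to the HSA $D = pA^{**}p \cap A$ supported by $p$, using the standard identification $D^{\perp\perp} = pA^{**}p$ from \cite{BHN} and the observation that $q = pqp \in D^{\perp\perp}$. Your bookkeeping of these identifications is accurate, so there is nothing to add.
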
 
 
\begin{theorem} \label{bncu}   {\rm (Refined noncommutative  Urysohn lemma
for operator algebras.) } \  Let  $A$ be an approximately unital
operator algebra.    Whenever a compact projection $q$ in $A^{**}$ is dominated by an
open projection $p$ in $A^{**}$, then there exists
$b \in \frac{1}{2} {\mathfrak F}_A$ with $q = qb, b = pb$.    Moreover, $q \leq u(b) \leq s(b) 
\leq p$.  
\end{theorem}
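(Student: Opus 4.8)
The plan is to pass to the hereditary subalgebra $D = p A^{**} p \cap A$ supported by the open projection $p$, and then apply Theorem~\ref{perun} to $D$ in place of $A$. First I would observe that $D$ is approximately unital with $D^{**} = p A^{**} p$, and that by Corollary~\ref{l25} the projection $q$ is compact in $D^{**}$. Applying the equivalence (iii)$\Leftrightarrow$(iv) of Theorem~\ref{perun} to $D$ then yields an element $b$ of $\frac{1}{2}{\mathfrak F}_D$ with $q = q b$; moreover, by inspecting the construction in the proof of Theorem~\ref{perun}, or by running the remarks just before Theorem~\ref{peakch} inside $D$, one may arrange that in fact $b q = q = q b$.

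Next I would transfer $b$ back to $A$. Since $b \in D = p A^{**} p \cap A$ we have $b = p b p$, so in particular $b = p b$. That $b \in \frac{1}{2}{\mathfrak F}_A$ follows from a short norm computation: $b \in \frac{1}{2}{\mathfrak F}_D$ says precisely $\Vert p - 2 b \Vert \leq 1$ (and this norm is the same computed in $D^{**}$, $A^{**}$ or $B^{**}$), while with $e = 1_{A^{**}}$ the element $e - 2 b = (e - p) + (p - 2 b)$ is a sum of two pieces of norm at most $1$ supported respectively under the orthogonal projections $e - p$ and $p$, so $\Vert e - 2 b \Vert = \max\{\Vert e - p\Vert, \Vert p - 2 b\Vert\} \leq 1$. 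This gives the first assertion: $b \in \frac{1}{2}{\mathfrak F}_A$, $q = q b$ and $b = p b$.

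For the `moreover' statement I would exploit the structure of $\frac{1}{2}{\mathfrak F}_A$-elements recalled in the introduction and in \cite{BRead}. Since $\frac{1}{2}{\mathfrak F}_A \subseteq {\mathfrak F}_A$, the support projection $s(b)$ is defined as the weak* limit of $(b^{\frac{1}{n}})$; as each $b^{\frac{1}{n}}$ lies in $\frac{1}{2}{\mathfrak F}_D \subseteq D = p A^{**} p \cap A$, passing to the weak* limit gives $s(b) = p\, s(b)\, p$, hence $s(b) \leq p$. For $u(b) \leq s(b)$, recall that $u(b)$ is actually a projection when $b \in \frac{1}{2}{\mathfrak F}_A$ (this is where $\Vert 1 - 2 b\Vert \leq 1$, equivalently $2 b^* b \leq b + b^*$ and $2 b b^* \leq b + b^*$ in $B$, is used; cf.\ \cite{BRead}), whereas $s(b)$ is simultaneously the left and the right support projection of $b$, so $s(b)\, b (b^* b)^n\, s(b) = b (b^* b)^n$ for each $n$; passing to the weak* limit yields $s(b)\, u(b)\, s(b) = u(b)$, and for a \emph{projection} $u(b)$ this forces $u(b) \leq s(b)$. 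Finally, from $b q = q = q b$ we get $(b^* b) q = b^* (b q) = b^* q = q$, hence $b (b^* b)^n q = b q = q$ for all $n$, and so $u(b) q = q$ in the weak* limit; as $u(b)$ is a projection this is exactly the statement $q \leq u(b)$. Stringing these together gives $q \leq u(b) \leq s(b) \leq p$.

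I expect the main obstacle to be the `moreover' part, specifically the inequality $u(b) \leq s(b)$: one genuinely needs $u(b)$ to be a projection, since for a general tripotent the identity $u(b) = s(b)\, u(b)\, s(b)$ does \emph{not} imply $u(b) \leq s(b)$, so the special nature of elements of $\frac{1}{2}{\mathfrak F}_A$ must be invoked; and one must keep left/right handedness straight (using both $q b = q$ and $b q = q$) when passing from $b$ to $u(b)$. The remainder is bookkeeping organised around Corollary~\ref{l25} and Theorem~\ref{perun}.
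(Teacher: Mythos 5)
Your argument is correct, and for the existence assertion it is exactly the paper's route: localize to the HSA $D$ supported by $p$ via Corollary \ref{l25}, then apply Theorem \ref{perun} to $D$ to obtain $b \in \frac{1}{2}{\mathfrak F}_D \subseteq \frac{1}{2}{\mathfrak F}_A$ with $q = qb$ and $b = pb$; your block-diagonal norm computation merely makes explicit the inclusion $\frac{1}{2}{\mathfrak F}_D \subseteq \frac{1}{2}{\mathfrak F}_A$ which the paper uses without comment. Where you diverge is the `moreover' clause: the paper observes $s(b) \leq p$ and defers $q \leq u(b) \leq s(b)$ to Corollary \ref{inc} (proved later from \cite[Proposition 6.7]{BHN} and \cite[Lemma 3.6]{Hay}), whereas you re-derive $q \leq u(b)$ and $u(b) \leq s(b)$ by hand from the support-projection identities. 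Those computations are fine; in fact the two-sided relation $bq = q$ that you propose to `arrange' by inspecting the construction is automatic from $q = qb$ and $\Vert b \Vert \leq 1$, by the elementary operator-theory observation at the start of Section 2 (and then $b^*q = q$ by taking adjoints), so no inspection is needed. But note that you have not actually bypassed Corollary \ref{inc}: the one nontrivial input you import, that $u(b)$ is a projection for $b \in \frac{1}{2}{\mathfrak F}_A$, is precisely the substantive content of that corollary, and attributing it to \cite{BRead} is slightly imprecise (the closest statement there, Proposition 2.22, concerns $u(1-x)$ for suitable elements of the unitization; the present paper derives the fact via peak-projection theory). So the comparison is: granting that one fact, your version replaces the appeal to the full corollary by short direct computations, while the paper's version is shorter at the cost of an explicitly acknowledged forward reference.
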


 \begin{proof}   If $q \leq p$ as stated,
then by Proposition \ref{l25} we know $q$ is compact in $D^{**} = pA^{**}p$, 
where $D$ is the HSA supported by $p$.  By  Theorem \ref{perun}  there 
exists $b \in  \frac{1}{2} {\mathfrak F}_D \subset \frac{1}{2} {\mathfrak F}_A$ 
with $q = qb, b = bp$.  Clearly $s(b) 
\leq p$.  We are unable to prove the other parts of the last statement yet, 
however they follow immediately 
from Corollary \ref{inc}.   
  \end{proof}

{\bf Remark.}  
  (1) \   In the case that $A$ is a $C^*$-algebra,
the above represents a
 proof of Akemann's noncommutative Urysohn lemma which
seems simpler than those
in the literature (we remark that in this
case the appeal to \cite[Proposition 6.1]{BRead} could be replaced by an
appeal to the fact that positive elements in a quotient $C^*$-algebra
lift to positive elements of the same norm).

In the setting of JB$^*$-triples it is shown in \cite{FP0}  that a 
tripotent is compact iff it is closed and `bounded' in an appropriate sense.

(2)  \ If $A$ is unital and  $q$ and $p$ are mutually orthogonal
compact projections in $A^{**}$, then there exists 
$a \in {\rm Ball}(A)$ with $aq = q$ and $a p = -p$, iff
 there exists $b \in  \frac{1}{2} {\mathfrak F}_A$ with $bq = q$ and $bp = 0$.
To see this simply use the formula $a = 2b-1$ or $b = \frac{a+1}{2}$.

\medskip 

For interests sake, we give a
 different proof of our Urysohn lemma if $A$ is a uniform algebra.
 A (unital) uniform algebra is a closed unital subalgebra of $C(K)$
for compact  $K$.  An {\em approximately unital uniform algebra} is a 
Banach algebra $A$ with cai which is isometrically isomorphic to a 
subalgebra of a commutative $C^*$-algebra.  It is easy to see that this is 
the same as an ideal in a unital uniform algebra which has a
cai; one can take the unital uniform algebra to be the unitization $A^1$.  

\begin{proposition} \label{unu}  
If $A$ is an approximately unital
uniform algebra, and if  $q \in A^{**}$ is
a compact projection in $A^{**}$, and if
$u \in A^{**}$ is an  open
projection with $u \geq q$,
there exists $a \in 
\frac{1}{2} {\mathfrak F}_A$
 with
$a q = q$ and $a = au$.   If $A$ is
a unital
uniform algebra on a compact space $K$, then the above may
be restated in the language 
of $p$-sets: if $E$ and $F$ are disjoint $p$-sets
in $K$ for $A$, then there exists $a \in A$
such that $|1-2 a(z)| \leq 1$ for all $z \in K$,
which is $1$ on $E$ and $0$ on $F$.  
\end{proposition}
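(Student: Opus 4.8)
The plan is to recast the statement in the classical language of $p$-sets and to reduce it to the fact that restriction of a uniform algebra to a $p$-set is a \emph{strict} quotient map (it carries the closed unit ball onto the closed unit ball). First I would pass to the unital case: by Theorem~\ref{perun} the compact projection $q$ is closed in $(A^1)^{**}$, and since the net defining $u$ lies in $A\subseteq A^1$, $u$ is open in $(A^1)^{**}$ as well. Realizing $A^1$ as a unital uniform algebra on a compact space $K$, the correspondence for uniform algebras (cf.\ \cite{Hay,BHN}) produces $p$-sets $E,F\subseteq K$ with $q=\chi_E$ and $1-u=\chi_F$; and $q\le u$ forces $\chi_{E\cap F}=q(1-u)=0$, i.e.\ $E\cap F=\emptyset$. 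If $a\in A^1$ has $\|1-2a\|\le1$, $a|_E=1$, $a|_F=0$, then $a\chi_E=\chi_E$ and $au=a$; since $u\le e:=1_{A^{**}}$ this gives $ea=ae=a$, so $a$ lies in the hereditary subalgebra $A=e(A^1)^{**}e\cap A^1$. Thus such an $a$ lies in $\frac12\mathfrak F_A$ and satisfies $aq=q$, $a=au$, which is exactly what the Proposition asks for, so it suffices to prove the $p$-set form of the statement for the unital uniform algebra $A^1$, which I now call $A$.

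So let $A$ be a unital uniform algebra on $K$ and $E,F$ disjoint $p$-sets; I seek $a\in A$ with $\|1-2a\|_K\le1$, $a|_E=1$, $a|_F=0$. Writing $E$ as an intersection of peak sets and using compactness of $F$ (disjoint from $E$), a finite subintersection is a peak set $P$ with $E\subseteq P$, $P\cap F=\emptyset$; repeating, one gets a peak set $Q$ with $F\subseteq Q$, $P\cap Q=\emptyset$. It then suffices to find $a$ with $a|_P=1$, $a|_Q=0$. Let $G=P\cup Q$, a peak set (finite unions of peak sets are peak sets). The restriction to $G$ of a peak function for $P$ is a peak function for $P$ relative to the uniform algebra $A|_G$ on $G$ (it is $1$ on $P$ and $<1$ in modulus on $G\setminus P=Q$); as $A|_G$ is closed in $C(G)$, the powers of this function, which converge uniformly on $G$ to $1_P$, show $1_P\in A|_G$. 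Hence $\psi_0:=1-2\cdot1_P$ lies in $A|_G$; it equals $-1$ on $P$ and $1$ on $Q$, with $\|\psi_0\|_G=1$.

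The decisive step is now to lift $\psi_0$ to some $\widetilde\psi\in A$ with $\|\widetilde\psi\|_K\le1$. Since $G$ is a $p$-set, the ideal $J_G=\{a\in A:a|_G=0\}$ is an $M$-ideal of $A$, hence proximinal, so restriction carries the closed unit ball of $A$ onto the closed unit ball of $A/J_G\cong A|_G$; this is also (a special case of) the Rudin--Carleson--Bishop peak interpolation theorem. Granting such a $\widetilde\psi$, put $a:=\frac12(1-\widetilde\psi)$. Then $\|1-2a\|=\|\widetilde\psi\|\le1$, while $a|_P=\frac12(1-(-1))=1$ and $a|_Q=\frac12(1-1)=0$, hence $a|_E=1$ and $a|_F=0$, as desired.

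I expect the peak interpolation step to be the only genuine obstacle. Every naive explicit candidate for $a$ built from peak functions $f$ of $E$ and $g$ of $F$ — for instance $\frac12\bigl(1+(f^n-g^n)(2-f^n-g^n)^{-1}\bigr)$, which already has the correct values on $E$ and $F$, or a suitable correction of $\frac12(1+f^n)$ by a power of $g$ — overshoots the bound $\|1-2a\|\le1$ by an amount of the order of a power of $\|f\|_F+\|g\|_E$, and this overshoot cannot be removed by rescaling without destroying the exact boundary values; obtaining the exact values and the exact norm bound simultaneously really does require the norm-preserving lift. The remaining ingredients — that a finite union of peak sets is a peak set, and that restriction to a $p$-set has closed range — are standard for uniform algebras, and I would simply cite them (e.g.\ from \cite{Gamelin}).
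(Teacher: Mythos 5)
Your proof is correct, and it shares the paper's overall skeleton: the same reduction of the approximately unital case to the unital one (compactness of $q$ gives closedness in $(A^1)^{**}$, openness of $u$ passes up, and $a=ae$ forces $a$ back into $A$ viewed as the HSA of $A^1$ supported by $e$), and the same decisive ingredient at the end, namely the norm-preserving peak interpolation theorem for $p$-sets, \cite[Theorem II.12.5]{Gamelin}, followed by the affine change of variables $a=\frac{1}{2}(1\pm h)$. Where you genuinely diverge is in how the $\pm 1$-valued interpolation data on (a set containing) $E\cup F$ is produced. The paper works on the maximal ideal space and argues Banach-algebraically, following \cite[Proposition 4.1.14]{Dal}: a Zorn's lemma/character argument shows $J_E+J_F=A$, and writing $1=f+g$ yields the element that is $0$ on one set and $1$ on the other, which is then interpolated directly on the $p$-set $E\cup F$. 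You instead enlarge $E$ and $F$, by compactness and finite intersections of peak sets, to disjoint peak sets $P\supseteq E$ and $Q\supseteq F$, and obtain the idempotent $1_P$ inside the closed restriction algebra $A|_{P\cup Q}$ as a uniform limit of powers of a peak function, before lifting. Your route avoids Gelfand theory and the passage to $M_A$ (it works over any compact $K$ on which $A$ sits as a uniform algebra), at the price of a couple of extra standard peak-set facts; note that you only need $P\cup Q$ to be a $p$-set (immediate from Glicksberg's measure-theoretic characterization, and exactly the fact the paper also uses for $E\cup F$), so you can drop the stronger assertion that a finite union of peak sets is a peak set, whose proof is less trivial than it sounds. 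Your identification of the norm-preserving lift as the one essential obstacle is exactly right; it is the same step the paper leans on, and neither argument gets by with naive algebraic combinations of peak functions.
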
 

  \begin{proof} 
First assume that $A$ is unital, acting on its maximal ideal space $M_A$.
To prove this case, note that the closed sets $E, F$ corresponding to $q$ and $u^\perp$
are disjoint $p$-sets, so $E \cup F$ is a $p$-set.   We follow
\cite[Proposition 4.1.14]{Dal}. By a simple Zorn's 
lemma argument, the ideal $J_E + J_F$, if this is not $A$,
 is contained in  a proper maximal ideal of $A$ (we recall that 
$J_E$ is the set of functions in $A$ vanishing on $E$).  This maximal ideal is
the kernel of a character, which corresponds to a point $w \in M_A$.
Since $f(w) = 0$ for all $f \in J_E$, we must have $w \in E$ (using
a well known property of $p$-sets).
Similarly, $w \in F$, so $w \in E \cap F = \emptyset$.
This contradiction shows that $J_E + J_F = A$.  Writing $1 = f + g$
with $f \in J_E, g \in J_F$, we have that $f = 1$ on $F$ and $f = 0$ on
$E$.   Let $g = 2f-1$.  By \cite[Theorem II.12.5]{Gamelin}, there exists $h \in {\rm Ball}(A)$ with
$h = g$ on $E \cup F$.   Let $a = \frac{1+h}{2}$, then $a \in 
\frac{1}{2} {\mathfrak F}_A$, and $a q = q$ and $a = au$.  

Now suppose that  $A$ is approximately unital.
If $q$  is compact in $A^{**}$ then as we said above,
$q$ is a closed projection  in $(A^1)^{**}$.
By the unital case there exists an $a \in 
\frac{1}{2} {\mathfrak F}_{A^1}$ with
$a q = q$ and $a = au$.   Since $a = ae$, where
$e$ is the support projection for $A$ in $(A^1)^{**}$, we have
$a \in A$, so $a \in 
\frac{1}{2} {\mathfrak F}_A$.
  \end{proof}

{\bf Remark.}
 (1) \ In contrast to the $C^*$-algebra case,
approximately unital operator algebras need not have 
any compact projections besides $0$.
Indeed if $A$ is an approximately unital
algebra of the type in \cite[Section 4]{BRead}, without
nontrivial open projections, then $A$ has no nontrivial compact projections.

(2) \ The Urysohn lemma for a  $C^*$-algebra $B$ may be sharpened
to the following, which we have not seen highlighted in the literature: 
Given projections $q \leq p$ in $B^{**}$, where $q$ is
 compact $q$ and $p$ open, then there exists $b \in B_+$,
and a compact $r$ with $q \leq b \leq s(b) \leq r \leq p$.  To prove
this, note that if we use the $b$ coming from
the usual Urysohn lemma  for $C^*$-algebras, 
then $u(b)$ and $s(b)$ may be regarded as
compact and open sets in $K$ where 
$C^*(b) = C_0(K)$.    By  the classical Urysohn lemma
there exists a nonnegative $g \in C_c(K)$ which is $1$ on the
compact set and $0$ on the complement of the open set.
Then $q \leq g(b) \leq s(g(b))   \leq r \leq s(b)  \leq p$ where 
$r$ corresponds to the compact support of $g$.

A similar proof gives an analogous Urysohn lemma for TRO's or JB$^*$-triples.
However the analogous result for nonselfadjoint operator algebras is
false.  Indeed this fails for the disk algebra, where closed projections
correspond to closed sets in the circle of measure zero, so that 
there cannot be open $p, u$ and closed $q, r$, 
with $0 \neq q \leq u \leq r \leq p \neq 1$.
 
\section{Compact projections and peak projections}

Let $B$ be a $C^*$-algebra.  Following \cite{Hay}, 
we call a projection $q$ in $B^{**}$  a {\em peak projection}
if there exists an $x \in {\rm Ball}(B)$
such that $xq=q$ and $\varphi(x^*x) < 1$ for all $\varphi \in Q(B)$ such that
     $\varphi(q) = 0$.   As in \cite{Hay} (see the proof of the 
next result),  this implies that $x^n \to q$ weak*.  This forces 
$q$ to be closed and hence compact (since with respect to 
the unitization, $1-x^n = (1-x^n)(1-q) \to 1-q$ weak*,
so $1-q$ is open, hence $e(1-q) = e-q$
is open, so that $q$ is closed).  We also say that $x$ peaks at
$q$, or $q$ is a peak for $x$, in this case.  
If $A$ is a subalgebra of $B$, and
$x \in A$ peaks at
$q$ in this sense, then since $x^n \to q$ weak* we
clearly have $q \in A^{**}$, and we say that $q$ {\em is a peak
projection for} $A$, or {\em is a peak
projection in} $A^{**}$.   If
$a$ peaks at a projection, then again from this fact about the 
limit of  $(x^n)$  it is easy to see that this projection 
is the largest projection such that $qa = q$.

\medskip

The following includes a version of \cite[Theorem 5.1]{Hay}  in our setting.  
In unpublished work  \cite{BH} with Hay from around 2006, referred to at the end of  
page 357 of \cite{BHN}, the first author
 proved a generalization of the following fact to TROs (see also \cite{LNW}).    
Here $u(x)$ is the tripotent mentioned in the 
introduction, namely  the weak* limit of $x (x^* x)^n$.

\begin{lemma} \label{haye}  Let $x \in {\rm Ball}(B)$ for a $C^*$-algebra $B$,
  and let $q$ be a closed projection in $B^{**}$ such that $xq=q$.  The following
  conditions are equivalent:
  \begin{enumerate}
 \item $x$ peaks at $q$,
   \item $\varphi (x^*x(1-q)) < 1$ for all $\varphi \in Q(B)$,
   \item $\varphi(x^*x(1-q)) < \varphi (1-q)$ for all $\varphi \in
     Q(B)$ such that $\varphi(1-q) \not= 0$,
   \item $\varphi(x^*x) < 1$ for every pure state $\varphi$ of $B$ such
     that $\varphi(q) = 0$,
   \item $\| px \| < 1$ for any compact projection $p$ in $B^{**}$
with $p \le 1-q$,
   \item $\| xp \| < 1$ for any compact projection $p \le 1-q$, and
   \item $\| xp \| < 1$ for any minimal projection $p \le 1-q$.
\end{enumerate}
These imply that   $q$ equals  the weak* limit of $(x^n)$, and $q$ 
also equals $u(x)$, the weak* limit of $x (x^* x)^n$.   Conversely, if this weak* limit $u(x)$ is a projection, then
this projection $u(x)$ is compact, indeed
$u(x) x = u(x)$, and the seven equivalent conditions above hold with $q = u(x)$.
\end{lemma}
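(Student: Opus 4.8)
The plan is to follow the pattern of \cite[Theorem 5.1]{Hay}, splitting the seven conditions into the ``state'' group (1)--(4) and the ``operator-norm'' group (5)--(7), proving each group internally and then linking the two. First I would record the consequences of $xq=q$. From $(xq)^*(xq)=q$ one gets $qx^*xq=q$, hence $(1-x^*x)q=0$, i.e.\ $x^*xq=q=qx^*x$, so $x^*x$ commutes with $q$; applying $x^*$ gives $x^*q=q$, and taking adjoints $qx=q$; the same argument applied to $qx=q$ shows $xx^*$ commutes with $q$ and $qxx^*q=q$. Put $z:=x^*x-q=(1-q)x^*x(1-q)$, a positive contraction with $x^*x=q+z$ and $q\perp z$; then $(x^*x)^n=q+z^n$, so $\chi_{\{1\}}(x^*x)=q+\chi_{\{1\}}(z)$ and $u(x)=x\,\chi_{\{1\}}(x^*x)$ (the weak* limit of $x(x^*x)^n$), while $w:=x-q=x(1-q)$ satisfies $w^*w=z$ and $x^n=q+w^n$. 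Since $1_{B^1}-x^*x\in\mathfrak F_{B^1}$, its support projection $s(1_{B^1}-x^*x)=\chi_{[0,1)}(x^*x)$ is open in $(B^1)^{**}$, so $\chi_{\{1\}}(x^*x)$ is closed in $(B^1)^{**}$, hence a \emph{compact} projection in $B^{**}$ by Theorem~\ref{perun}; and $\chi_{\{1\}}(z)=\chi_{\{1\}}(x^*x)-q$ is then a compact projection dominated by $1-q$.

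For (1)--(4): since $x^*x(1-q)=z$, for $\varphi\in Q(B)$ we have $\varphi(z)=\varphi(x^*x)-\varphi(q)$, which equals $\varphi(x^*x)$ when $\varphi(q)=0$ and is $<\varphi(x^*x)\le 1$ when $\varphi(q)>0$; using also $0\le z\le 1-q$ this gives (1)$\Leftrightarrow$(2). For (2)$\Rightarrow$(3), given $\varphi\in Q(B)$ with $c:=\varphi(1-q)\neq0$, the functional $c^{-1}\varphi((1-q)\,\cdot\,(1-q))$ is a normal state of $B^{**}$ whose restriction to $B$ is a state, so (2) applied to it and to $z=(1-q)z(1-q)$ yields $\varphi(z)<c$; (3)$\Rightarrow$(2) is immediate. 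Specializing (2) to pure states gives (2)$\Rightarrow$(4). The loop is closed via (4)$\Rightarrow$(7) and (7)$\Rightarrow$(1) from the next paragraph.

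For (5)--(7): a minimal projection $p\le 1-q$ is the support of a pure state $\varphi$ with $\varphi(q)=0$, and $px^*xp=\varphi(x^*x)p$, so $\|xp\|^2=\varphi(x^*x)$; hence (4)$\Rightarrow$(7), and conversely if (1) fails there is a pure state $\varphi$ with $\varphi(q)=0$ and $\varphi(x^*x)=1$ whose support $p$ is minimal with $p\le 1-q$ and $\|xp\|=1$, giving (7)$\Rightarrow$(1). For (1)$\Rightarrow$(6), represent $B$ nondegenerately with $B^{**}=\pi(B)''$; for a nonzero compact $p\le 1-q$ we have $\|xp\|^2=\|px^*xp\|=\sup\{\langle x^*x\eta,\eta\rangle:\eta\in pH,\ \|\eta\|=1\}$, and each vector state $\omega_\eta|_B$ lies in $F_p:=\{\varphi\in S(B):\varphi(p)=1\}$; since $p$ is compact $F_p$ is weak* compact, and $\varphi\mapsto\varphi(x^*x)$ is weak* continuous on it because $x^*x\in B$, so the supremum is attained at some $\varphi_0\in F_p$ with $\varphi_0(q)=0$, whence $\varphi_0(x^*x)<1$ by (1); so $\|xp\|<1$. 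Since minimal projections are compact, (6)$\Rightarrow$(7). Finally (5)$\Leftrightarrow$(6): the displayed implications together with the two ``moreover'' clauses below show that (1) holds for $x$ (with our $q$) if and only if $u(x)$ is a projection equal to $q$; since $u(x^*)=u(x)^*$ this condition is symmetric under $x\leftrightarrow x^*$, and as $\|px\|=\|x^*p\|$ and $x^*q=q$, applying (1)$\Leftrightarrow$(6) to $x^*$ yields (5)$\Leftrightarrow$(6).

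For the ``moreover'' clauses, assume (1)--(7). If $\chi_{\{1\}}(z)\neq0$ then, being a nonzero compact projection $\le 1-q$ with $\|x\chi_{\{1\}}(z)\|^2=\|\chi_{\{1\}}(z)x^*x\chi_{\{1\}}(z)\|=\|\chi_{\{1\}}(z)\|=1$ (using $z\chi_{\{1\}}(z)=\chi_{\{1\}}(z)$ and $q\chi_{\{1\}}(z)=0$), it contradicts (6); so $\chi_{\{1\}}(z)=0$, hence $\chi_{\{1\}}(x^*x)=q$, $(x^*x)^n\to q$ weak*, and $u(x)=x\chi_{\{1\}}(x^*x)=xq=q$. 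That $x^n\to q$ weak* follows as in \cite{Hay}: $x^n=q+w^n$ with $w^*w=z$ having no spectral projection at $1$, and any weak* cluster point of $(w^n)$ must be $0$. Conversely, if $u(x)$ is a projection then $u(x)=u(x)^*u(x)=\chi_{\{1\}}(x^*x)\,x^*x\,\chi_{\{1\}}(x^*x)=\chi_{\{1\}}(x^*x)$, so $q:=u(x)$ equals $\chi_{\{1\}}(x^*x)$ and is compact, with $xq=u(x)=q$, hence $qx=q$ and $u(x)x=q=u(x)$; and (2) holds, for if $\varphi(z)=1$ for some $\varphi\in Q(B)$ then $\varphi$ is a state with $\varphi(1-z)=0$, while $\chi_{\{0\}}(1-z)=\chi_{\{1\}}(z)=\chi_{\{1\}}(x^*x)-q=0$, so normality of $\varphi$ on $B^{**}$ forces $\varphi=0$, absurd. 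Thus (1)--(7) all hold with $q=u(x)$. The main obstacle is precisely the linkage between the two groups, above all (1)$\Rightarrow$(6): it rests on Akemann's fact that $F_p$ is weak* compact exactly when $p$ is compact, together with the easily-overlooked point that $\varphi\mapsto\varphi(x^*x)$, unlike $\varphi\mapsto\varphi(x^*x(1-q))$, is weak* continuous on $Q(B)$ (since $x^*x\in B$), which is what makes the compactness of $F_p$ usable; the weak* convergence $x^n\to q$, needing the ``no spectral mass at $1$'' argument, is the other delicate point.
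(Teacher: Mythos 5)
Your plan---a self-contained proof of the seven equivalences plus the two ``moreover'' clauses, where the paper instead passes to the unitization and quotes \cite[Theorem 5.1]{Hay} for most of the cycle---is sound in outline, and most steps ((1)$\Leftrightarrow$(2)$\Leftrightarrow$(3), (2)$\Rightarrow$(4), (4)$\Rightarrow$(7), (1)$\Rightarrow$(6) via weak* compactness of $F_p$, the converse ``moreover'' clause, and the $x\leftrightarrow x^*$ symmetry giving (5)$\Leftrightarrow$(6)) are correct. But your preliminary assertion that $\chi_{\{1\}}(z)=\chi_{\{1\}}(x^*x)-q$ is a \emph{compact} projection is a genuine gap: a projection dominated by a compact projection is compact only if it is also closed, and the difference of a compact projection and a closed subprojection need not be closed. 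For instance, with $B=C[0,1]$, $x=1$ and $q=\chi_{\{0\}}$ (the standing hypotheses hold), one gets $\chi_{\{1\}}(z)=\chi_{(0,1]}$, which is open and not closed, hence not compact. This matters because your forward ``moreover'' argument applies condition (6) to $\chi_{\{1\}}(z)$, which is legitimate only for compact projections, and your (5)$\Leftrightarrow$(6) argument leans on that clause; under the hypotheses, establishing compactness of $\chi_{\{1\}}(z)$ is in effect as hard as showing it is zero. The repair is easy and mirrors your own converse argument: if $\chi_{\{1\}}(z)\neq 0$, choose a normal state $\varphi$ of $B^{**}$ with $\varphi(\chi_{\{1\}}(z))=1$; then $\varphi|_B\in S(B)$, $\varphi(q)=0$ and $\varphi(x^*x(1-q))=\varphi(z)=1$, contradicting (2) with no compactness needed.

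A second, smaller gap: in (7)$\Rightarrow$(1) you assert without proof that failure of (1) yields a \emph{pure} state $\varphi$ with $\varphi(q)=0$ and $\varphi(x^*x)=1$. Failure of (1) gives only a state with these properties; upgrading to a pure state is essentially the implication (4)$\Rightarrow$(1), one of the substantive points of the lemma (it is exactly where the paper verifies that (4) passes to $B^1$ and then invokes Hay's unital theorem). It is true, but needs an argument: e.g.\ $K=\{\psi\in Q(B):\psi(x^*x)=1,\ \psi(q)=0\}$ is a face of $Q(B)$, and it is weak* closed (hence compact) because every $\psi$ with $\psi(x^*x)=1$ is a state, and on states $\psi\mapsto\psi(q)=1-\psi(1-q)$ is upper semicontinuous since $1-q$ is open; Krein--Milman then provides an extreme point of $K$, which is an extreme point of $Q(B)$, nonzero, hence a pure state. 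Without some such argument (or an appeal to \cite{Hay} via the unitization as in the paper), your cycle through (4) and (7) is not closed.
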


\begin{proof}   Many parts of the proof
of \cite[Theorem 5.1]{Hay} carry through verbatim to the nonunital 
case of the seven numbered conditions, and the rest can be 
done by going to the unitization and applying  \cite[Theorem 5.1]{Hay} 
as we shall see, and we leave some of this to the reader.
For example, we demonstrate in the
next paragraph that 
if (4) holds then it also holds with $B$ replaced by $B^1$,
hence by \cite[Theorem 5.1]{Hay}, (1) and (2) for example hold with
$B$ replaced by $B^1$.  Since quasistates on $B$ have unique 
extensions to quasistates on $B^1$, it follows that 
(1) and (2) hold as stated.  
  
Thus suppose that (4) holds, and that $\varphi$ is a pure state on
$B^1$ with $\varphi(q) = 0$. If $\varphi(x^* x) = 1$, then $\psi =
\varphi_{\vert B} \in S(B)$.   If $\psi = \frac{1}{2}(\varphi_1 +
\varphi_2)$, with $\varphi_i \in Q(B)$, then $\varphi_i(x^* x) = 1$,
so that $\varphi_i$ are states.  Hence they have unique extensions
to states $\widetilde{\varphi_i}$ on $B^1$, whose average is clearly
$\varphi$.  Thus $\varphi = \widetilde{\varphi_i}$ and $\psi =
\varphi_i$.  Hence $\psi$ is pure, and so by hypothesis $\psi(x^* x)
= \varphi(x^* x) < 1$, a contradiction.  So (4) holds for $B^1$.

(1) $\Rightarrow$ (5) \
Assume (1) and let $p$ be a compact projection in $B^{**}$ such
that $p \le 1- q$.  Suppose $\|px\| = 1$.  Then $\|x^* px \| = 1$ and
$$x^* px  \le x^*(1- q)x = x^* x- q.$$
Since $p$ is closed, we have that $x^* px$ is a decreasing limit of
terms from $B$.  Thus  $x^* px$ is upper semicontinuous on
$Q(B)$. Thus, it achieves its maximum at a quasistate $\psi$,
which is necessarily a state. Hence,
$$1 = \psi(x^*px) \le \psi(x^* x-q) = \psi(x^* x)-\psi(q).$$  It
follows that $\psi(x^*x) = 1$ and $\psi(q) = 0$.  This contradicts (1).

That (1) implies that $q = u(x)$, and equals 
the weak* limit of $(x^n)$,  follows as in \cite[Lemma 3.6]{Hay}.  One considers the universal
representation of $B$, and  observes that
since $q x^* x q = q$ and
$(1- q) x^* x (1- q)$ is `completely nonunitary', 
we have $x^n \to q$ weak* and $(x^* x)^n \to q$ weak*.  It
follows that $x (x^* x)^n \to x q = q$ weak*, and so
$q = r = u(x)$.  In the last lines we have not used
that $q$ is closed, hence as we saw after the definition of 
a peak projection if $x q = q$, then (2) or (3), or the 
matching condition in the definition of a peak projection,
 implies that $q$ is closed.

Finally, if the weak* limit $u(x)$ of $x (x^* x)^n$  is a projection
$q$ say, then from e.g.\ \cite[Proposition 3.18]{BN0} and the lines 
above it, we have $x q = q$ and $q$ is a closed projection.
Hence $q x^* x = q$, so $q$ is a compact projection
with $q = qx$.   
Suppose that
$\varphi$ is a state of $B$ annihilating $q$.
In the universal representation of $B$ we can write
$\varphi = \langle \cdot \zeta , \zeta \rangle$ for a unit vector $\zeta \in H_u$.
If $\varphi(x^* x) = 1$, then $$\langle (1- q) x^* x (1- q) , \varphi
\rangle = 1,$$ since by Cauchy-Schwarz the other
terms in the expansion of the last displayed equation are $0$.
By the converse to Cauchy-Schwarz, $x^* x \zeta = \zeta$, and
$\varphi((x^* x)^n) = 1$.
Thus $\varphi(x^* q) = \varphi(q) = 1$, which is a contradiction.
So (1) in the lemma holds, hence also (2)--(7).
\end{proof}

\begin{corollary}  \label{isph}  If $a \in A$, and $q$ is  a 
projection in $A^{**}$, then $q$ is a peak for $a$ in $A^{**}$  in the sense above iff
 $q$ is a peak for $a$  in $(A^1)^{**}$   in the sense of {\rm \cite{Hay}}.  \end{corollary}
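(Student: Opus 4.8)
The plan is to prove Corollary \ref{isph} by reducing each of the two notions of ``$q$ is a peak for $a$'' to the single, unambiguous condition $u(a) = q$, where $u(a)$ is the weak* limit of the sequence $(a(a^*a)^n)$, and then observing that this condition is insensitive to whether $u(a)$ is computed inside $B^{**}$ or inside $(B^1)^{**}$. Throughout, fix $a \in A$; then $a \in A \subseteq B \subseteq B^1$ and $a \in A^1$, so in particular $a \in {\rm Ball}(B) \cap {\rm Ball}(B^1)$, and $q \in A^{**} \subseteq (A^1)^{**}$.

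First I would extract from Lemma \ref{haye} the following, valid for any $C^*$-algebra $\mathcal C$ with $a \in {\rm Ball}(\mathcal C)$ and any projection $r \in \mathcal C^{**}$: the element $a$ peaks at $r$ in $\mathcal C^{**}$ if and only if $u(a)$, computed in $\mathcal C^{**}$, equals $r$. Indeed, if $a$ peaks at $r$ then $r$ is closed (as noted just after the definition of a peak projection), so the hypothesis of Lemma \ref{haye} is met and its penultimate sentence gives $u(a) = r$; conversely, if $u(a) = r$ then $u(a)$ is a projection, and the final sentence of Lemma \ref{haye} yields that $a$ peaks at $u(a) = r$. I would apply this twice: with $\mathcal C = B$, where ``$a$ peaks at $q$ in $B^{**}$'' is exactly the statement that $q$ is a peak for $a$ in $A^{**}$ in the sense above (since $a \in A$ and $q \in A^{**}$); and with $\mathcal C = B^1$, where ``$a$ peaks at $q$ in $(B^1)^{**}$'' is exactly the statement that $q$ is a peak for $a$ in $(A^1)^{**}$ in the sense of \cite{Hay} (since $a \in A^1$ and $q \in (A^1)^{**}$; that Hay's notion for $A^1$ may be computed using the cover $B^1$ in place of any other is itself a consequence of the same remark).

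Next I would verify that the two resulting conditions literally coincide. Since $B^{**}$ is a von Neumann subalgebra of $(B^1)^{**}$ it is weak* closed there, and by the Krein--Smulian theorem its relative weak* topology agrees with its intrinsic weak* topology; equivalently, the canonical map $B^{**} \hookrightarrow (B^1)^{**}$ is a weak* homeomorphism onto a weak* closed subspace. The sequence $(a(a^*a)^n)$ lies in $B \subseteq B^{**}$, so it converges weak* in $(B^1)^{**}$ if and only if it converges weak* in $B^{**}$, and then the limits agree and lie in $B^{**}$. Hence, for the given projection $q$, the equality $u(a) = q$ holds in $B^{**}$ if and only if it holds in $(B^1)^{**}$.

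Assembling: $q$ is a peak for $a$ in $A^{**}$ $\iff$ $a$ peaks at $q$ in $B^{**}$ $\iff$ $u(a) = q$ in $B^{**}$ $\iff$ $u(a) = q$ in $(B^1)^{**}$ $\iff$ $a$ peaks at $q$ in $(B^1)^{**}$ $\iff$ $q$ is a peak for $a$ in $(A^1)^{**}$ in the sense of \cite{Hay}, which is the claim. The point needing the most care --- the ``hard part'', though nothing here is deep --- is the reduction of ``$a$ peaks at $q$'' to $u(a) = q$, which relies both on the fact that peaking forces $q$ to be closed (so Lemma \ref{haye} is applicable) and on using both halves of the last two sentences of that lemma; the remaining subtlety is the topological comparison of the third paragraph. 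For reassurance that there is no ambiguity about the ambient algebra in which a peak projection is said to live, one may also record that a peak projection is simultaneously compact in $A^{**}$, $B^{**}$ and $(A^1)^{**}$, by Theorem \ref{perun}.
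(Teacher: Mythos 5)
Your argument is correct, and it is essentially the route the paper has in mind: the paper dismisses this corollary as ``obvious from the last proof,'' i.e.\ from Lemma \ref{haye}, and your reduction of both notions of peaking to the single condition $u(a)=q$ uses exactly the two final assertions of that lemma (applied to $B$ and to $B^1$, with the peak hypothesis forcing $q$ to be closed so the lemma applies), together with the canonical identification of $B^{**}$ with $B^{\perp\perp}\subseteq (B^1)^{**}$. One cosmetic remark: the appeal to Krein--Smulian is unnecessary --- since the sequence $(a(a^*a)^n)$ is bounded and lies in $B$, it suffices that every functional on $B^1$ restricts to one on $B$ and every functional on $B$ extends to $B^1$, so weak* convergence and the limit are unchanged under the canonical embedding (the adjoint of the restriction map), which is a weak* homeomorphism on bounded sets.
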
  \begin{proof}  
This is obvious from the last proof, or is an exercise.  \end{proof} 

\begin{corollary}  \label{inc}  If $a \in \frac{1}{2} {\mathfrak F}_A$
then $u(a)$ is a projection, is in $A^{**}$, and is the peak for $a$.
Indeed  $q = u(a)$  satisfies all the 
equivalent conditions in the last lemma.   Also, $u(a) \leq s(a)$, and 
$u(a^n) = u(a^{\frac{1}{n}}) = u(a)$ if $n \in \Ndb$.  And $u(a) \neq 0$
iff $\Vert a \Vert = 1$.
\end{corollary}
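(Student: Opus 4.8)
The plan is to reduce the entire corollary to the single claim that $u(a)$ is a projection: once that is established, the assertions that $u(a)$ is compact, that $a$ peaks at $u(a)$, and that $q=u(a)$ satisfies all seven equivalent conditions of Lemma~\ref{haye} are precisely the converse half (the last sentence) of that lemma, and the remaining statements follow by short arguments using the roots of $a$ in $\frac{1}{2}\mathfrak{F}_A$.

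First I would prove $u(a)$ is a projection. Work inside $B^{1}\subseteq B^{**}$. Expanding $(1-2a)^{*}(1-2a)\leq 1$ shows $a\in\frac{1}{2}\mathfrak{F}_A$ is equivalent to $2a^{*}a\leq a+a^{*}$. Since $a^{*}a$ is a positive contraction, $((a^{*}a)^{m})$ decreases to the spectral projection $p=E_{\{1\}}(a^{*}a)\in B^{**}$ in the weak* topology. I claim $ap=p$. If $\zeta$ is a unit vector in the range of $p$ in the universal representation of $B$, then $a^{*}a\zeta=\zeta$, so $\|a\zeta\|=1$, and pairing the inequality $2a^{*}a\leq a+a^{*}$ against $\zeta$ gives
\[
1=\langle a^{*}a\zeta,\zeta\rangle\leq\mathrm{Re}\,\langle a\zeta,\zeta\rangle\leq|\langle a\zeta,\zeta\rangle|\leq\|a\zeta\|\,\|\zeta\|=1,
\]
so every inequality here is an equality; equality in Cauchy--Schwarz together with $\mathrm{Re}\,\langle a\zeta,\zeta\rangle=1$ forces $a\zeta=\zeta$. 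Hence $ap=p$, and then taking adjoints and using $a^{*}ap=p$ gives $pa=a^{*}p=pa^{*}=p$ as well. Consequently $u(a)$, being the weak* limit of $a(a^{*}a)^{m}$, equals $ap=p$, a projection.

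By the last sentence of Lemma~\ref{haye} applied with $q=u(a)$, we get that $u(a)$ is compact, $u(a)a=u(a)$, all seven conditions hold, and $u(a)$ is the weak* limit of $(a^{n})$; in particular $a$ peaks at $u(a)$, and since a peak projection is always the weak* limit of the powers, $u(a)$ is the (unique) peak for $a$. Since each $a^{n}\in A$ and $A^{**}=A^{\perp\perp}$ is weak* closed in $B^{**}$, it follows that $u(a)\in A^{**}$. For $u(a)\leq s(a)$, note that $\|1-a\|\leq\frac12\|1-2a\|+\frac12\leq 1$, so $a\in\mathfrak{F}_A$ and $s(a)$ is defined with $a\,s(a)=a$; iterating gives $a^{n}s(a)=a^{n}$, and passing to the weak* limit gives $u(a)s(a)=u(a)$, i.e.\ $u(a)\leq s(a)$.

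For the powers I would argue via condition (6) of Lemma~\ref{haye}: if $x\in\mathrm{Ball}(B)$ peaks at a closed projection $q$ with $xq=q$, then for $k\in\Ndb$ we have $x^{k}q=q$ and $\|x^{k}p\|\leq\|x^{k-1}\|\,\|xp\|\leq\|xp\|<1$ for every compact projection $p\leq 1-q$, so $x^{k}$ also satisfies (6) relative to $q$; hence $x^{k}$ peaks at $q$ and $u(x^{k})=q=u(x)$. Taking $x=a$ yields $u(a^{n})=u(a)$. Since $a^{1/n}\in\frac{1}{2}\mathfrak{F}_A$, the previous two paragraphs show that $u(a^{1/n})$ is a projection at which $a^{1/n}$ peaks, and taking $x=a^{1/n}$, $k=n$ then shows that $a=(a^{1/n})^{n}$ peaks at $u(a^{1/n})$, so $u(a)=u(a^{1/n})$. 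Finally, if $u(a)\neq 0$ then $1=\|u(a)\|\leq\|a\|\leq 1$ (using $\|a(a^{*}a)^{m}\|\leq\|a\|$ and weak* lower semicontinuity of the norm of $B^{**}$), so $\|a\|=1$; conversely, if $\|a\|=1$ then by weak* compactness of $\mathrm{Ball}(B^{*})$ and a rotation there is $\Phi\in\mathrm{Ball}(B^{*})$ with $\Phi(a)=1$, whence $\Phi(u(a))=1$ by the fact recalled in the introduction, so $u(a)\neq 0$. The only genuine obstacle here is the identity $ap=p$ in the second paragraph; everything afterwards is formal given Lemma~\ref{haye} and the inequality $2a^{*}a\leq a+a^{*}$, and one should be careful in the last step to propagate condition (6) rather than attempt to show $a^{n}\in\frac{1}{2}\mathfrak{F}_A$, which is false in general.
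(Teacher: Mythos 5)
Your proposal is correct, and its skeleton (establish that $u(a)$ is a projection, then let the converse part of Lemma \ref{haye} deliver compactness, the peaking property, and membership in $A^{\perp\perp}$ via the weak* limit of $(a^n)$) matches the paper; but the central step is handled by a genuinely different route. The paper gets the peak projection for $a$ by citing \cite[Proposition 6.7]{BHN} (applied to $a = \frac{1}{2}(1+y)$ with $y = 2a-1$, the projection being the complement of the support projection of $\overline{(1-a)A^1}$) together with \cite[Lemma 3.6]{Hay}, and only then identifies it with $u(a)$; you instead prove directly that $u(a)$ is a projection, by rewriting $\Vert 1-2a \Vert \leq 1$ as $2a^*a \leq a+a^*$ and running a converse-to-Cauchy--Schwarz argument in the universal representation to show $a$ acts as the identity on the range of $p = E_{\{1\}}(a^*a)$, so that $u(a) = ap = p$. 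This is self-contained and in fact mirrors the technique the paper itself uses in the final paragraph of the proof of Lemma \ref{haye}, at the cost of redoing by hand what \cite{BHN} and \cite{Hay} already provide. Your remaining steps are also mild variants: you get $u(a) \leq s(a)$ from $a\,s(a) = a$ and weak* continuity of right multiplication (simpler than the paper's power-series argument for $a^{1/n}$); you get $u(a^n) = u(a)$ by propagating condition (6) of Lemma \ref{haye} to powers (the paper does essentially the same with minimal projections), and you correctly note that one cannot argue via $a^n \in \frac{1}{2}\mathfrak{F}_A$; the norming-functional argument for $u(a) \neq 0$ iff $\Vert a \Vert = 1$ is the same as the paper's. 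All steps check out, including the implicit uses of separate weak* continuity of multiplication and of $(a^{1/n})^n = a$ from \cite{BRead}.
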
  \begin{proof}  
If $a \in \frac{1}{2} {\mathfrak F}_A$ for an approximately unital operator algebra $A$, then $\Vert 1 - 2a \Vert
 \leq 1$, and so $a = \frac{1}{2} (1 +y)$ where $y = 2a -1 \in {\rm Ball}(A^1)$.  By \cite[Proposition
6.7]{BHN}, if $p$ is the support projection of $\overline{(1-a)A^1}$, then $q = 1-p$ is a peak projection
for $a$.  So $a^n \to q$ and  $(a^*a)^n \to q$ weak* by \cite[Lemma 3.6]{Hay}.
So $a (a^*a)^n 
\to qa = q$ weak*.
Hence $q = u(a)$, and this is a projection.   Except for the last two statement,
the rest follows from Lemma
\ref{haye}.    

To see that $u(a) \leq s(a)$ recall  the power series representation  from \cite{BRead} , namely 
$a^{\frac{1}{n}} = \sum_{k = 0}^\infty \,
{t \choose k} (-1)^k (1-a)^k$, where $t = \frac{1}{n}$ .  Since $u(a) a = u(a)$ it follows that
 $u(a) a^{\frac{1}{n}} = u(a)$, and in the limit $u(a) s(a) = u(a)$. 

Finally, $u(a^{\frac{1}{m}})$ 
is the weak* limit of  $a^{\frac{nm}{m}})$, which is $u(a)$.  
Also, $a^m u(a) = u(a)$, and if $r$ is a minimal projection
dominated by $1 - u(a)$ then $\Vert r a^m \Vert \leq \Vert r a \Vert 
< 1$.  So by Lemma \ref{haye} we have $u(a^m) = u(a)$.
If $\Vert a \Vert < 1$ then clearly $(a^* a)^n \to 0$ and $u(a) = 0$.
If $\Vert a \Vert = 1$ then by a fact above Proposition \ref{uconv} 
there is a functional with  $\psi(u(a))  = 1$, so $u(a) \neq 0$.   \end{proof}

We now present our main result: 

\begin{theorem}  \label{inco}  
If $A$ is an  approximately unital operator algebra, then 
\begin{enumerate}   \item [{\rm (1)}]  A 
projection $q \in A^{**}$ is  compact iff it is  a decreasing 
limit of peak projections.  This is equivalent to $q$ being the infimum
of a set of peak projections.
\item [{\rm (2)}] If $A$  is a separable approximately unital operator algebra, then the compact
projections in $A^{**}$ are precisely the peak projections.
\item [{\rm (3)}]  A projection in $A^{**}$
is a peak projection in $A^{**}$   
iff it is of form $u(a)$ 
for some $a \in \frac{1}{2} {\mathfrak F}_{A}$. 
\end{enumerate}
\end{theorem}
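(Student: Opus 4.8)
The plan is to prove the three parts together, the backbone being two observations. The first is an extension of Proposition~\ref{uconv} to countable convex combinations: if $(c_k)$ is a sequence in $\frac{1}{2}\mathfrak{F}_A$ and $t_k>0$ with $\sum_k t_k=1$, then $c:=\sum_k t_k c_k$ converges in norm and, $\frac{1}{2}\mathfrak{F}_A$ being closed and convex, lies in $\frac{1}{2}\mathfrak{F}_A$; moreover $u(c)=\bigwedge_k u(c_k)$. I would prove the last assertion exactly as Proposition~\ref{uconv}: a $\psi\in{\rm Ball}(B^*)$ satisfies $\psi(c)=1$ iff $\sum_k t_k\psi(c_k)=1$, and since each $|\psi(c_k)|\le 1$ and $(t_k)$ is a probability vector this forces $\psi(c_k)=1$ for all $k$; hence $\{u(c)\}_{\prime}=\bigcap_k\{u(c_k)\}_{\prime}=\{\bigwedge_k u(c_k)\}_{\prime}$ by the order isomorphism of \cite{ERF} and the fact recalled above Proposition~\ref{uconv} that $\psi(v)=1\Leftrightarrow\psi(b)=1$ for $v=u(b)$. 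By Corollary~\ref{inc}, each $u(c_k)$ and $u(c)$ is then a peak projection lying in $A^{**}$. The second observation is that compactness questions reduce to the $\sigma$-unital case, as described below.

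The easy points come first. In (1), a peak projection is compact (as recalled before Lemma~\ref{haye}), a decreasing limit of projections equals their infimum, and an infimum of compact projections is compact by Corollary~\ref{ifcom}; so a decreasing limit, equivalently an infimum, of peak projections is compact, and conversely, once we know every compact projection is a decreasing limit of peaks, all three conditions in (1) are seen to be equivalent to compactness. In (3), ``$\Leftarrow$'' is precisely Corollary~\ref{inc}. So it remains to show: every compact $q\in A^{**}$ is the infimum of a decreasing sequence of peak projections, and, when $q$ is moreover a peak projection or $A$ is separable, $q=u(b)$ for some $b\in\frac{1}{2}\mathfrak{F}_A$.

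For the reduction, given a compact $q$ Theorem~\ref{perun} supplies $c_0\in\frac{1}{2}\mathfrak{F}_A$ with $qc_0=q$, so $q\le u(c_0)\le s(c_0)$; let $C_0:=\overline{c_0 A c_0}$, the HSA supported by the open projection $s(c_0)$, which is $\sigma$-unital since $c_0$ is a strictly positive element of it. By Corollary~\ref{l25}, $q$ is compact in $C_0^{**}$, and since $\frac{1}{2}\mathfrak{F}_{C_0}\subseteq\frac{1}{2}\mathfrak{F}_A$ and $u(\cdot)$ on elements of $C_0$ is unambiguous, it suffices to treat a compact projection $q$ in a $\sigma$-unital approximately unital operator algebra $A$. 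There $e-q$ is open in $A^{**}$ (shown in the proof of Theorem~\ref{perun}) and the HSA $D$ it supports is again $\sigma$-unital; exhausting $e-q$ by an increasing sequence $r_1\le r_2\le\cdots$ of compact projections with $\bigvee_k r_k=e-q$, the $p_k:=e-r_k$ are open in $A^{**}$ with $q\le p_k$ and $\bigwedge_k p_k=q$. Theorem~\ref{bncu} applied to each pair $q\le p_k$ yields $c_k\in\frac{1}{2}\mathfrak{F}_A$ with $qc_k=q$ and $q\le u(c_k)\le s(c_k)\le p_k$, so $\bigwedge_k u(c_k)=q$; replacing $c_k$ by the running averages $\frac1k(c_1+\cdots+c_k)$ and using the convexity lemma makes $(u(c_k))$ a decreasing sequence of peak projections (Corollary~\ref{inc}) with infimum $q$, which gives (1) in the $\sigma$-unital case and hence, via the reduction, in general. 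Finally $b:=\sum_{k\ge1}2^{-k}c_k\in\frac{1}{2}\mathfrak{F}_A$ satisfies $u(b)=\bigwedge_k u(c_k)=q$ by the convexity lemma; so every compact projection of a $\sigma$-unital (in particular separable) $A$ is a peak projection of the stated form, proving (2), while for a general peak $q$ one first passes through the $\sigma$-unital $C_0$ containing $q$ as a compact projection to obtain the non-trivial direction of (3).

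The step I expect to be the main obstacle is exhausting the open projection $e-q$ by an increasing sequence of \emph{compact} projections in $A^{**}$. In the $C^*$-case this is the classical cut-down $\chi_{[\varepsilon,\|h\|]}(h)$ of a strictly positive element $h$ of the HSA $D$; here $h$ is not selfadjoint, so one must instead invoke the $\frac{1}{2}\mathfrak{F}$-functional calculus of \cite{BRead}, applying to $h$ suitable holomorphic functions (e.g.\ automorphisms of the disc $\overline{D(\tfrac12,\tfrac12)}$ fixing $0$ and $1$) and checking that the resulting peak projections are genuinely compact, lie in $A^{**}$, and increase to $e-q$; this, together with the $\sigma$-unitality of HSAs of $\sigma$-unital operator algebras, is where the machinery of \cite{BHN,BRead} is really used. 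A secondary subtlety, already visible in Theorem~\ref{bncu}, is that the naive unital construction $\frac12(1+x)$ from a peaking $x\in{\rm Ball}(A)$ lies in $(A^1)^{**}$ but not in $A$; the $\sigma$-unital reduction and the series $\sum_k 2^{-k}c_k$ are precisely what circumvent this, which is why our peak projections differ from those of \cite{BRead} in the nonunital case.
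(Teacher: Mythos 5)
Your countable refinement of Proposition \ref{uconv} and the reduction via Theorem \ref{perun}, Corollary \ref{l25} and Theorem \ref{bncu} are fine as far as they go, but the step you yourself flag as ``the main obstacle'' is not a technical gap to be filled by $\frac{1}{2}\mathfrak{F}$-functional calculus: it is false. You need the open projection $e-q$ to be the supremum of an increasing sequence of \emph{compact} projections of $A^{**}$, and in the nonselfadjoint setting an open projection in $A^{**}$ need not be the supremum of even all the closed projections it dominates (this is precisely Remark (1) at the end of Section 3). A counterexample sitting squarely inside your $\sigma$-unital (indeed separable) reduction: let $A=\{f\in A(\Ddb):f(1)=0\}$, viewed in $B=C(\Tdb)$, and take $q=0$, which is compact. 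By Theorem \ref{perun}, the compact projections of $A^{**}$ are the projections in $A^{\perp\perp}$ closed in $(A^1)^{**}=A(\Ddb)^{**}$, and by Hay's results these correspond to closed subsets of $\Tdb$ of Lebesgue measure zero (see Remark (2) at the end of Section 2). If $(r_k)$ is any sequence of such projections and $\tilde\lambda$ is the normal extension to $C(\Tdb)^{**}$ of the Lebesgue state, then $\tilde\lambda(r_k)=0$ for every $k$, so by normality $\tilde\lambda\bigl(\bigvee_k r_k\bigr)=0$, whereas $\tilde\lambda(e)=1$ since $1-e$ is the closed projection of the single point $\{1\}$. Hence $\bigvee_k r_k\neq e=e-q$: the exhausting sequence, and with it the open $p_k=e-r_k$ and the peaks $u(c_k)$ you squeeze between $q$ and $p_k$, do not exist, and your arguments for (1), (2) and the nontrivial direction of (3) all collapse at this point. (Your unproved claim that HSAs of $\sigma$-unital operator algebras are $\sigma$-unital is a further weak spot, but it is moot.)

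The paper's proof avoids approximating $e-q$ from inside $A^{**}$ altogether: it works in the unitization, where by \cite[Corollary 2.21]{BRead} the projection $1-q$ \emph{is} an increasing limit of support projections $s(x_t)$ with $x_t\in A^1$, $\Vert 1-2x_t\Vert\le 1$, so that $q$ is a decreasing limit of the peak projections $q_t=u(1-x_t)$ \emph{for} $A^1$. The real work is then converting these into peak projections of elements of $A$: one averages $1-x_t$ with the element $x\in{\rm Ball}(A)$ satisfying $qx=q$ (Proposition \ref{uconv} shows $u(z_t)$ is a projection wedged between $q$ and $q_t$), and then multiplies, setting $a_t=z_tx\in A$ and checking via conditions (5)--(7) of Lemma \ref{haye} and \cite[Theorem 5.1]{Hay} that $u(a_t)=u(z_t)\searrow q$. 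Part (2) is done the same way using \cite[Corollary 2.17, Proposition 2.22]{BRead} applied in $A^1$, and part (3) is not a formal consequence of a $\sigma$-unital reduction: it needs a separate argument involving the separable subalgebra generated by a peaking element together with an $\mathfrak{F}$-compactness witness $r$, the central closed projection $1-s(r)$, and an application of (2) to that subalgebra. So, while your countable convexity lemma is correct and potentially useful, the architecture of your proof rests on an exhaustion statement that fails even for ideals of the disk algebra, and the theorem is not established by it.
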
  \begin{proof}  (2) \ In the separable case, suppose that $q = q x$ for $x \in {\rm Ball}(A)$.  By \cite[Corollary 2.17 and Proposition 2.22]{BRead}, $1-q = s(w)$ for some $w \in
A^1$, and $q = u(z) = u(z) z$ for some $z \in {\rm Ball}(A^1)$.
Indeed $u(z)$ is a peak projection for $z$ in the sense of \cite{Hay}.
Let $a = zx \in {\rm Ball}(A)$.  Then $q a = qzx = qx = q$,
and for any compact projection $p \leq 1-q$ we have
$\Vert p a \Vert \leq \Vert p z \Vert < 1$ by \cite[Theorem 5.1]{Hay}.  So $q = u(a)$
by e.g.\ Lemma \ref{haye}. 

(1) \ One direction of the first `iff' is obvious.
For the other, let $q \in A^{**}$ be a compact
projection with $q = q x$ for some $x \in {\rm Ball}(A)$.  Then 
$q \leq u(x)$.
 Now  $1-q$ is an increasing limit of
 $s(x_t)$ for $x_t \in A^1$ with
$\Vert 1 - 2 x_t \Vert \leq 1$, by \cite[Corollary 2.21]{BRead}, 
so that $q$ is a decreasing weak* limit of the $q_t =
s(x_t)^{\perp} = u(1 - x_t)$
 (see \cite[Proposition 2.22]{BRead}). 
We have by Proposition \ref{uconv} that  $q_t \wedge u(x) = u(1 - x_t)
 \wedge u(x) = 
u(z)$, where $z = z_t$ is the average of $1 - x_t$
and $x$.
So $q_t \geq u(z)$.  However a tripotent dominated by a projection in the 
ordering of tripotents is a projection; thus $u(z)$ is a projection.   Also,
$q \leq u(z)$ since $q \leq q_t$ and $q \leq u(x)$.
Note that $(u(z_t))$ is decreasing,
since $(q_t)$ is decreasing.    
Let $a_t = z_t x  \in {\rm Ball}(A)$.  Then 
$$u(z_t) a_t = u(z_t) z_t x = u(z_t) x =  
u(z_t) u(z_t)^* u(x) u(x)^* x = u(z_t)  u(z_t)^* u(x) = u(z_t),$$ and 
for any compact projection $p \leq 1-u(z_t)$ 
we have
$\Vert p a_t \Vert \leq \Vert p z_t \Vert < 1$ by \cite[Theorem 5.1]{Hay}.  So $u(z_t)
= u(a_t)$ by e.g.\  Lemma \ref{haye}.
Then $u(a_t) = u(z_t) \searrow q$, since $q \leq  u(z_t)  \leq q_t \to q$ weak*.

Finally, the `infimum' statement follows as in \cite{Hay} from the fact that $u(a) \wedge u(b) = u(\frac{a+b}{2})$ (see  Proposition \ref{uconv}).

  (3) \ 
The one direction  is obvious.  For the other,
let $q = u(b)$ be a peak projection, for $b \in {\rm Ball}(A)$.  This is ${\mathfrak F}$-compact by Theorem \ref{perun}, so there exists $r
\in \frac{1}{2} {\mathfrak F}_A$ with $r q = q$.  If $d = r b \in {\rm Ball}(A)$
then $d q = rq = q$.  If $\varphi$ is a state on $B$ with $\varphi(q) = 0$,
then $\varphi(d^* d) \leq \varphi(b^* b) < 1$ by 
Lemma \ref{haye}.  By Lemma \ref{haye} again, $d$ peaks at $q$.
If $x = dr = rbr$ then a similar argument shows that 
$x \in {\rm Ball}(A)$ and $x$ peaks at $q$.   So $q = u(x)$.  Let $D$ 
be the separable operator algebra generated by $x$ and $r$.  Note that 
$s(r) \in D^{\perp \perp}$ and indeed $s(r)$ is an identity
for $D^{\perp \perp}$ (since $x = rbr$).  Thus $D$ is
an approximately unital operator algebra.   Let $G = C^*(D)$, the 
$C^*$-algebra generated by $D$ in $B$.  We now work in $D^1$ and $G^1$,
and its 
second dual.   The projection $f = 1-s(r)$ is a minimal projection in 
the center of $(G^1)^{**}$ (note that any cai for $D$ is a cai for $G$,
and hence the support projection $s(r)$ of $D$ is also the    
support projection of $G$ in $(G^1)^{**}$).  Hence $f$ is
closed.  Of course $q$ is a closed projection  in $(D^1)^{**}$, 
and $(1-s(r))q = q - s(r) q = 0$ since $q r = q$.  Therefore
$f + q$ is closed, hence compact.
By the second assertion in Theorem \ref{inco}, 
$f + q = u(k)$ for some $k \in {\rm Ball}(D)$.  
We have $k q = k u(k) q = u(k) q = q$.
  Let $h = 2r -1 \in {\rm Ball}(D^1)$, then $h q = 
2rq -q = q$, and $h f = 2rf -f = -f$.    Let $a = \frac{1}{2} (h k  +1) \in 
\frac{1}{2} {\mathfrak F}_{D^1}$.   
Note that 
$$a f = \frac{1}{2} (hk f + f) = \frac{1}{2} (hk u(k) f  + f) = \frac{1}{2} (hu(k) f  + f) =  \frac{1}{2} (hf  + f) =  \frac{1}{2} (-f+f),$$
so $af = 0$.   Hence $a \in D$.  We have $a q = \frac{1}{2} (hk q + q)
= \frac{1}{2} (h q + q) = q$.  Let $p$ be a minimal projection 
in $G^{**}$ with $p \leq 1-q$.   Then either $p f = 0$ 
or $p f = p$, or equivalently $p = ps(r)$ 
or $ps(r) = 0$.  Since $f$ is central, 
$\Vert a p \Vert$ equals
$$ \max \{ \Vert a p f \Vert , 
\Vert a p s(r) \Vert \} = \Vert a p \, s(r) \Vert  
 \leq \frac{1}{2} (\Vert hk p \, s(r) \Vert + 1) 
 \leq \frac{1}{2} (\Vert k p \, s(r) \Vert + 1) < 1,$$
since $\Vert k p \, s(r) \Vert < 1$ by Lemma \ref{haye}, because
 $p s(r)$ is $0$ or $p$, which 
is closed, and $$p s(r)  \leq (1-q) s(r) = s(r) - q = 1 - u(k) .$$ 
 Thus by      Lemma \ref{haye} again, 
$u(a) = q$ in $D^{**}$.  Clearly we also have 
$a \in  \frac{1}{2} {\mathfrak F}_{A}$, and $u(a) = q$ in $A^{**}$.  
\end{proof}

\begin{corollary}  \label{jmet} Let $a, b \in \frac{1}{2} {\mathfrak F}_{A}$
for an operator algebra $A$.
 If  $s(a)$ and $s(b)$ commute then their infimum   is
of form $s(c)$ for some $c \in \frac{1}{2} {\mathfrak F}_{A}$.  Similarly, if
$u(a)$ and $u(b)$ commute then 
their supremum is
of form $u(c)$ for some $c \in \frac{1}{2} {\mathfrak F}_{A}$.  
That is, 
the supremum of two commuting
peak projections in $A^{**}$ is a peak projection in $A^{**}$.
\end{corollary}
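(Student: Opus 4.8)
The plan is to prove the two displayed statements separately, obtaining the second from the first applied to the unitization, and then observing that the final sentence of the corollary is immediate from Theorem \ref{inco}(3). For the first statement the key idea is to pass to the separable subalgebra $D$ of $A$ generated by $a$ and $b$: the hypothesis that $s(a)$ and $s(b)$ commute is exactly what makes $D$ approximately unital, and the conclusion then drops out of the description of open projections in separable operator algebras from \cite{BRead}. For the second statement, I would work in $(A^1)^{**}$ and use the duality $1-u(x) = s(1-x)$ to turn the supremum $u(a)\vee u(b)$ into the infimum $s(1-a)\wedge s(1-b)$, apply the first statement to $A^1$, and then push the resulting element back into $A$.

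For the first statement, put $p = s(a)\wedge s(b)$; since $s(a)$ and $s(b)$ commute this equals $s(a)s(b)$, a projection in $A^{**}$. Let $D$ be the closed subalgebra of $A$ generated by $a$ and $b$; it is separable, and $a,b\in\frac12\mathfrak{F}_D$ (note $1_{D^1}$ and $1_{A^1}$ coincide). By \cite{BRead} the roots $a^{1/n},b^{1/n}$ lie in $\frac12\mathfrak{F}_D\subseteq D$, and since $a^{1/n}\to s(a)$ and $b^{1/n}\to s(b)$ weak*, we get $s(a),s(b)\in D^{\perp\perp}$. As $s(a)$ and $s(b)$ commute, $p':=s(a)\vee s(b)$ again lies in $D^{\perp\perp}$, and $p'$ is a two-sided identity for $a$, for $b$, and hence for all of $D$; thus $D$ is approximately unital with $1_{D^{**}}=p'$. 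Also $p=s(a)s(b)\in D^{\perp\perp}$, and $p$ is open in $D^{**}$, being the product of the commuting open projections $s(a),s(b)$ of $D^{**}$ (as in Akemann's theory, viewing these inside $C^*(D)^{**}$). Since $D$ is separable, \cite[Corollary 2.17]{BRead} yields $c\in\frac12\mathfrak{F}_D$ with $p=s(c)$; as $\frac12\mathfrak{F}_D\subseteq\frac12\mathfrak{F}_A$ and the support projection of $c$ is the same whether computed in $D^{**}$ or in $A^{**}$, this $c$ is the one required.

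For the second statement, suppose $u(a)$ and $u(b)$ commute and put $q=u(a)\vee u(b)$. By Corollary \ref{inc} the projections $u(a),u(b)$ are peak, hence compact, so $q$ is compact in $A^{**}$ by Corollary \ref{ifcom}, hence closed in $(A^1)^{**}$ by Theorem \ref{perun}. Working in $(A^1)^{**}$, where $1-u(x)=s(1-x)$ for $x\in\frac12\mathfrak{F}_{A^1}$ (see \cite[Proposition 2.22]{BRead}), we have $q^\perp=(1-u(a))\wedge(1-u(b))=s(1-a)\wedge s(1-b)$, an infimum of two commuting support projections; by the first statement applied to $A^1$ there is $c'\in\frac12\mathfrak{F}_{A^1}$ with $q^\perp=s(c')$. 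Since $u(a)\le s(a)\le e$ and $u(b)\le s(b)\le e$, where $e=1_{A^{**}}$, we have $q\le e$, so the central projection $f=1-e$ of $(A^1)^{**}$ satisfies $f\le q^\perp=s(c')$; hence the $f$-component of $c'$ is nonzero, and on replacing $c'$ by $c'e+f$ (which still lies in $\frac12\mathfrak{F}_{A^1}$ and has the same support) we may assume $c'f=f$. Then $1-c'=(1-c')e$, so $1-c'\in A$, and therefore $c:=1-c'\in\frac12\mathfrak{F}_A$ satisfies $u(c)=u(1-c')=1-s(c')=q$ by the same duality. The last assertion of the corollary then combines this with Theorem \ref{inco}(3).

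I expect the main obstacle to be the verification that the auxiliary separable algebra $D$ is approximately unital, with identity the join of the two support projections occurring in its bidual; this is precisely where the commutativity hypothesis is used, and without it $D$ need not be approximately unital, so the separable case of Theorem \ref{inco} (via \cite[Corollary 2.17]{BRead}) would not apply. A secondary point requiring care is the bookkeeping in $(A^1)^{**}$ needed to replace the element $c'$ of $\frac12\mathfrak{F}_{A^1}$ by one in $\frac12\mathfrak{F}_A$.
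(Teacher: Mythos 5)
Your proof of the first statement is essentially the paper's: pass to the separable algebra $D$ generated by $a,b$, note that $s(a)\vee s(b)$ is an identity for $D^{**}$, that $s(a)\wedge s(b)$ is open in $D^{**}$, and invoke \cite[Corollary 2.17]{BRead}. (One misattribution in your closing remark: $D$ is approximately unital whether or not $s(a)$ and $s(b)$ commute, since $s(a)\vee s(b)$ is always a two-sided identity for $D^{**}$; the commutativity is what makes the meet open and, in the second part, the join closed.)

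For the second statement you take a genuinely different route, dualizing to $s(1-a)\wedge s(1-b)$ in $(A^1)^{**}$ and applying the first statement to $A^1$, and there is a genuine gap in the descent from $A^1$ back to $A$. Writing $c'=x+\lambda 1$ with $x\in A$, the element $c'e+f$ equals $x+1+(\lambda-1)e$, which lies in $A^1$ only if $\lambda=1$ (or $A$ is unital), because $e=1_{A^{**}}\notin A^1$ when $A$ is nonunital; so ``replacing $c'$ by $c'e+f$'' takes you out of $\frac{1}{2}\mathfrak{F}_{A^1}$ into $\frac{1}{2}\mathfrak{F}_{(A^1)^{**}}$, and the subsequent conclusion $c=1-c'\in A$ fails. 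All that $f\le s(c')$ gives is $\lambda\neq 0$ (together with $|1-2\lambda|\le 1$); nothing in the construction of $c'$ via part one applied to $A^1$ forces $\lambda=1$. The step can be repaired: you have already shown that $q=u(a)\vee u(b)$ is compact, so either (i) argue as the paper does, working in the separable subalgebra $D\subset A$ generated by $a$ and $b$ (one checks $a\,(u(a)\vee u(b))=u(a)\vee u(b)$, so $q$ is compact there) and applying Theorem \ref{inco}(2),(3) to get $q=u(k)$ with $k\in\frac{1}{2}\mathfrak{F}_D\subset\frac{1}{2}\mathfrak{F}_A$; or (ii) keep your $c'$, note $q=u(1-c')$ with $1-c'\in{\rm Ball}(A^1)$, choose $x\in{\rm Ball}(A)$ with $qx=q$ by compactness, and verify via Lemma \ref{haye} that $q=u\bigl((1-c')x\bigr)$ with $(1-c')x\in{\rm Ball}(A)$, so that $q$ is a peak projection in $A^{**}$ and Theorem \ref{inco}(3) applies. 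As written, however, the final step of your argument does not produce an element of $\frac{1}{2}\mathfrak{F}_A$.
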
  

\begin{proof}   Consider the separable operator 
algebra $D$ generated by $a$ and $b$.  Then $D^{**}$
 includes $s(a)$ and $s(b)$, and hence 
also includes
$e = s(a) \vee s(b)$.  Since $e$ is an identity for $D^{**}$,
$D$ is an approximately
unital operator algebra.   Clearly $s(a) \wedge s(b)$ is 
open in $D^{**}$, since
the infimum of two commuting open projections is
open (as proved by Akemann), and is 
 in $D^{**}$ by a remark in the `background and notation'
section of our introduction.  By \cite[Corollary 2.17]{BRead}, it
equals 
$s(c)$ for some $c \in \frac{1}{2} {\mathfrak F}_{D} \subset \frac{1}{2} {\mathfrak F}_A$.  

The second assertion is similar. 
Define $D$ as above, a separable approximately
unital operator algebra.  Then $u(a), u(b)$, and $u(a) \vee u(b)$,
are in $D^{**}$.   Clearly $u(a) \vee u(b)$ is closed 
(since the supremum of two commuting closed projections is closed),
and $$a (u(a) \vee u(b)) = a u(a) (u(a) \vee u(b)) = u(a) (u(a) \vee u(b)) =
u(a) \vee u(b). $$  So $u$ is compact.   By 
Theorem \ref{inco} (2),
$u(a) \vee u(b) = u(k)$ for some $k \in {\rm Ball}(D)$,
and by Theorem \ref{inco} (3) we may assume that $k \in \frac{1}{2} {\mathfrak F}_{D}
\subset \frac{1}{2} {\mathfrak F}_A$.  
The final assertion is clear from what comes before.
\end{proof}

We also point out a simple noncommutative variant of the well known
`Rossi local peak set theorem' from the theory of uniform 
algebras \cite{Gamelin}:  

\begin{corollary}  \label{rossi} 
Let
$A$ be an approximately
unital operator algebra. 
Suppose that $q$ is a closed projection in $A^{**}$
such that there exists an open projection $u \in A^{**}$ 
with $u \geq q$, and there exists $a \in {\rm Ball}(A)$ with
$a q = q$ and $\Vert a p \Vert < 1$ for every 
minimal projection $p \leq u - q$.
Then $q$ is a peak projection for $A$.
\end{corollary}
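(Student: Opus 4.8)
The plan is to exhibit a single element $c \in {\rm Ball}(A)$ for which $u(c)$ is a projection equal to $q$, and then simply to quote Lemma~\ref{haye}; the element $c$ will be the average of the given ``local'' peaking element $a$ with a genuine peaking element $b$ supported under $u$. First I would note that $q$ is automatically compact: taking adjoints in $aq=q$ gives $q=qa^{*}$ with $a^{*}\in{\rm Ball}(C^{*}(A))$, so, $q$ being closed, $q$ is compact in $C^{*}(A)^{**}$ and hence in $A^{**}$ by Theorem~\ref{perun}. Since $q$ is then a compact projection dominated by the open projection $u$, Theorem~\ref{bncu} yields $b\in\frac{1}{2}\mathfrak{F}_{A}$ with $qb=q$, with $b=ub$ (so that $u(b)\le u$), and with $q\le u(b)\le s(b)\le u$; by Corollary~\ref{inc}, $u(b)$ is a projection. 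Put $c=\frac{1}{2}(a+b)\in{\rm Ball}(A)$.

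By Proposition~\ref{uconv}, applied in $C^{*}(A)$, one has $u(c)=u(a)\wedge u(b)$. Being a tripotent dominated by the projection $u(b)$, this is itself a projection, and $u(c)\le u(b)\le u$. Also $q\le u(c)$: if $\varphi\in{\rm Ball}(A^{*})$ satisfies $\varphi(q)=1$, then a short Cauchy--Schwarz computation using $aq=q$ and $qb=q$ forces $\varphi(a)=\varphi(b)=1$, hence $\varphi(c)=1$, hence $\varphi(u(c))=1$ by the fact recalled just before Proposition~\ref{uconv}; the Edwards--R\"uttimann order-isomorphism used in the proof of that proposition then gives $q\le u(c)$. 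Thus $q\le u(c)\le u$, all three being projections.

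It remains to prove $u(c)=q$. If not, then $u(c)-q$ is a nonzero projection of $C^{*}(A)^{**}$, so it dominates the support projection $p$ of some pure state of $C^{*}(A)$ (extremality on the weak* closed face $\{\varphi\in S(C^{*}(A)):\varphi(u(c)-q)=1\}$), and $p$ is minimal; since $p\le u(c)-q\le u-q$, the hypothesis gives $\|ap\|<1$. On the other hand, $u(c)$ being a projection we have $c\,u(c)=u(c)$ (Lemma~\ref{haye}), so $cp=c\,u(c)\,p=u(c)\,p=p$, i.e.\ $ap+bp=2p$. As $\|ap\|\le1$ and $\|bp\|\le1$, the triangle inequality forces $1=\|p\|=\frac{1}{2}\|ap+bp\|\le\frac{1}{2}(\|ap\|+1)$, whence $\|ap\|\ge1$ --- contradicting $\|ap\|<1$. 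Therefore $u(c)=q$, and Lemma~\ref{haye} shows that $c\in{\rm Ball}(A)$ peaks at $q$; thus $q$ is a peak projection for $A$.

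The one step where the \emph{local} hypothesis on $a$ is genuinely used is this last one, and it hinges on two features of $b$: that $b$ is supported under $u$, which forces $u(c)\le u$ so that any minimal $p\le u(c)-q$ lies under $u$, exactly where ``$\|ap\|<1$ for minimal $p\le u-q$'' applies; and that $b\in\frac{1}{2}\mathfrak{F}_{A}$, which makes $u(c)$ an honest projection and hence $cp=p$ for $p\le u(c)$. The remaining points --- the two Cauchy--Schwarz computations, the identity $c\,u(c)=u(c)$ for a projection $u(c)$, and the existence of a minimal projection below a nonzero projection of $C^{*}(A)^{**}$ --- are routine. The only bookkeeping I expect to require care is that the various $u(\cdot)$'s and the meet are computed in $C^{*}(A)^{**}$ and then identified with $q\in A^{**}$ at the end, but this is covered by the results already quoted.
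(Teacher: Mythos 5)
Your overall strategy (average the local element $a$ with a Urysohn element $b \in \frac{1}{2}\mathfrak{F}_A$ supported under $u$, identify $u(c)=u(a)\wedge u(b)$ via Proposition \ref{uconv}, and then force $u(c)=q$) is genuinely different from the paper's proof, which instead localizes to the HSA supported by $u$ and quotes Lemma \ref{haye} there; most of your intermediate steps are fine. However, the pivotal step has a real gap: you assert that if $u(c)-q\neq 0$ then it dominates a minimal projection of $C^*(A)^{**}$, justified by ``extremality on the weak* closed face $\{\varphi\in S(C^*(A)):\varphi(u(c)-q)=1\}$''. That face is not known to be weak* closed: by Proposition \ref{wso} (equivalently \cite[Lemma 2.4]{AAP}) weak* closedness of such a face is essentially equivalent to compactness of the projection, and $u(c)-q$, a difference of nested compact projections, need not even be closed (already in $C[0,1]$, with the closed sets $[0,1]$ and $\{0\}$, the difference projection is open and not closed). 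Nor is it true in a general von Neumann algebra such as $C^*(A)^{**}$ that a nonzero projection dominates a minimal projection --- projections in the diffuse part of the bidual dominate none --- so the existence you label ``routine'' in your closing paragraph is exactly the point requiring proof.

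The claim you need is in fact true, but it takes an argument: since $u(c)$ is compact, $\{\varphi\in Q(B):\varphi(u(c))=1\}$ is a weak* compact face of $Q(B)$ (use $x\in{\rm Ball}(B)$ with $u(c)x=u(c)$ to see that a quasistate limit of such states is a state), so it has extreme points, and these are pure states; if every pure state with value $1$ at $u(c)$ also took value $1$ at $q$, then Krein--Milman together with the injectivity of the compact-projection/weak*-closed-face correspondence \cite{AP} would force $u(c)=q$; otherwise take a pure state $\varphi$ with $\varphi(u(c))=1>\varphi(q)$ and compress it by $1-q$: the compressed state is a vector state of the same irreducible representation, hence pure, takes value $1$ at $u(c)$ and $0$ at $q$, and its (minimal) support lies under $u(c)-q$, after which your contradiction with $\|ap\|<1$ goes through. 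A smaller slip: your opening deduction of compactness passes through ``$q$ closed, hence compact in $C^*(A)^{**}$'', but closedness in $A^{**}$ does not give closedness in $C^*(A)^{**}$ when the cai of $A$ is not a cai of $C^*(A)$; this is harmless, since $aq=q$ with $a$ a contraction gives $qa=q$, so $q$ is compact in $A^{**}$ directly from the definition (and then in $C^*(A)^{**}$ by Theorem \ref{perun}).
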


\begin{proof}  
By Lemma \ref{haye}, $q$ satisfies the conditions for being a peak projection in the second
dual of the HSA $D$ supported by $u$.    So there exists $a \in {\rm Ball}(D)$ with
$q = u(a)$.  Hence $q$ is also a peak projection for $A$.  
\end{proof}

{\bf Remark.} (1) \    The following illustrates a limitation of our theory.
A closed (even compact) projection in 
$A^{**}$ need not be the infimum of the open 
projections in
$A^{**}$ dominating it, in contrast to the $C^*$-algebra case (see \cite[Proposition 2.3]{Hay}).  Similarly,
an open  projection in 
$A^{**}$ need not be the supremum of the closed
projections in
$A^{**}$ which it dominates.
Indeed let $A$ be an approximately unital operator algebra with
no nontrivial open projections.  See e.g.\ \cite[Section 4]{BRead}.
As in that reference, $(A^1)^{**}$ has only one nontrivial open projection, namely 
the support projection $e$ of $A$ in $(A^1)^{**}$, and this is
clearly not  the supremum of the closed
projections in $A^{**}$ which it dominates.
And $q = 1 -e$ is  a closed (even compact) projection in $(A^1)^{**}$,
but is not  the infimum of the open
projections dominating it.

(2) We do not see a simple relationship between peak projections in the sense above,
for an approximately unital operator algebra,
and peak projections in the sense of {\rm \cite[Definition 2.20]{BRead}}.  Probably the
latter should not have been called peak projections.   In particular, one cannot say
for a peak projection $q$ in the sense of our present paper, that
$q^\perp = s(x)$
for some $x \in \frac{1}{2} {\mathfrak F}_A$ (although this is true if $A$ is separable
by  \cite[Corollary 2.17]{BRead}).    For example,
let $B = c_0(I)$ for  an uncountable discrete set $I$, and let $q = (0,1) \
\in B \oplus \Cdb$.

\section{Compact projections and faces}
In this section we generalize some of the facial theory of $C^*$-algebras
from \cite{AP} to more general algebras.  In \cite{BHN} we began this; in Theorem 4.1 of that 
paper it was proved that, for a unital operator  algebra $A$,
a projection $p$ is open in $A^{**}$ if and only if $F_{p} = \{f \in S(A) : f(p)=0 \}$ is weak* closed.  It was pointed out that this remains true in the approximately unital case if one instead uses the quasi-state space $Q(A)$, which is weak* compact.  Hence, closed projections $q$ correspond to certain weak* closed faces $F_{1-q}$ of $Q(A)$.  
We will prove that compact projections correspond to certain faces $F_{1-q} \cap S(A)$ of $S(A)$ which are weak* closed
in $S(A)$ (or equivalently, in $Q(A)$). 

In this section and the next we assume that the cai for $A$ is a cai for the containing $C^*$-algebra $B$
(this is automatic if $A$ generates $B$).   For a norm one element $x \in A$, we let $\{ x \}^{\prime,S(A)}$ denote the weak*
closed face $\{f \in S(A) :  f(x)=1\}$ in $S(A)$, and we
let $\{ x \}^{\prime,A}$ denote the weak* closed face $\{f \in {\rm Ball}(A^{*}) :  f(x)=1\}$ in Ball$(A^{*})$.  Such
weak* closed faces are said to be {\em weak* exposed} (compare with \cite{ER}).
If $x \in A^{**}$, then $\{x\}_{\prime,S(A)}$ denotes $\{f \in S(A) :  f(x)=1\}$ and $\{x\}_{\prime,A}$ denotes $\{f \in {\rm Ball}(A^{*}) :  f(x)=1\}$.  Such norm closed faces are said to be {\it norm exposed}.  If $A$ is a $C^*$-algebra,
we write $\{ x \}^{\prime,A}$ and $\{x\}_{\prime,A}$ simply as $\{ x \}^{\prime}$ and $\{x\}_{\prime}$.
A face of $S(A)$ which is the  intersection of a family of
sets of the form $\{ a \}^{\prime,A}$ for $a \in {\rm Ball}(A)$,
is said to be {\em weak* semiexposed}.
It is clear that weak* semiexposed faces are weak* closed in $S(A)$.
If $q$ is a projection, it is clear that $F_{1-q} \cap S(A) = \{q\}_{\prime,S(A)}$.

The following is a very slight restatement of \cite[Lemma 3.3 (i)]{ER4}.

\begin{proposition} \label{exp}  Let $x  \in {\rm Ball}(A)$, and let $u(x)$
be the weak* limit of the sequence $(x (x^{*}x)^n)$ in $B^{**}$,
 for a containing $C^{*}$-algebra $B$. Then $\{u(x) \}_{\prime,B} =
 \{x\}^{\prime,B}$  and $\{ u(x) \}_{\prime,S(B)} = \{x\}^{\prime,S(B)}$.
If $u(x)$ is a projection (and hence coincides with the weak* limit of $(x^n)$ in
$A^{\perp\perp}$ by Lemma {\rm \ref{haye}}), then $\{
u(x) \}_{\prime,S(A)} = \{x\}^{\prime,S(A)}$.
 \end{proposition}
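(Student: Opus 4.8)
The plan is to establish the three claimed equalities in turn, working from the containing $C^*$-algebra down to $A$. First I would recall the fact quoted just above Proposition \ref{uconv}: for $\psi \in {\rm Ball}(B^*)$, one has $\psi(x) = 1$ iff $\psi(u(x)) = 1$; this is precisely the statement $\{u(x)\}_{\prime,B} = \{x\}^{\prime,B}$, so the first equality is immediate from \cite[Lemma 3.3 (i)]{ER4} (or the spectral-theory argument alluded to there). The second equality, $\{u(x)\}_{\prime,S(B)} = \{x\}^{\prime,S(B)}$, then follows simply by intersecting both sides with $S(B)$, since $S(B) \subseteq {\rm Ball}(B^*)$ and a functional in $S(B)$ satisfies $\psi(x)=1$ iff it satisfies $\psi(u(x))=1$ by the same fact.

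The real content is the third equality, $\{u(x)\}_{\prime,S(A)} = \{x\}^{\prime,S(A)}$, under the hypothesis that $u(x)$ is a projection. Here I would use the standing assumption of Section 4 that the cai of $A$ is a cai of $B$, so that by the discussion in the introduction states of $A$ are exactly the restrictions to $A$ of states of $B$, and each state of $A$ extends uniquely to a state of $B$. Now if $\varphi \in S(A)$ and $\tilde\varphi \in S(B)$ is its unique extension, then $\varphi(x) = \tilde\varphi(x)$, and since $u(x) \in A^{**}$ (it is the weak* limit of $(x^n)$, all of whose terms lie in $A$) we also have $\varphi(u(x)) = \tilde\varphi(u(x))$, where on the left $\varphi$ is viewed via its canonical normal extension to $A^{**}$ and on the right $\tilde\varphi$ via its normal extension to $B^{**}$; these agree because the weak* limit $(x^n) \to u(x)$ is computed the same way in either bidual. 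Hence $\varphi(x) = 1 \iff \tilde\varphi(x) = 1 \iff \tilde\varphi(u(x)) = 1 \iff \varphi(u(x)) = 1$, which is exactly what we want.

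The step I expect to require the most care is the bookkeeping about how $u(x)$ is evaluated against a state of $A$ versus a state of $B$ — that is, making sure that the normal extension of a state $\varphi \in S(A)$ to $A^{**}$ and the normal extension of its $B$-extension $\tilde\varphi$ to $B^{**}$ genuinely agree on the element $u(x)$. The cleanest way to handle this is to avoid $u(x)$ directly and instead use that $u(x)$ is the weak* limit of the sequence $(x^n)$ with $x^n \in A$ (this is the parenthetical remark in the statement, justified by Lemma \ref{haye}): then $\varphi(u(x)) = \lim_n \varphi(x^n)$ where every $\varphi(x^n)$ is unambiguous, and likewise $\tilde\varphi(u(x)) = \lim_n \tilde\varphi(x^n) = \lim_n \varphi(x^n)$, so the two agree on the nose. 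With that observation the argument of the previous paragraph goes through with no ambiguity, and the remaining implications are the same elementary manipulation with $u(x) x = u(x)$ (or $u(x)^* x = u(x)$, via Cauchy–Schwarz as in the proof of Lemma \ref{haye}) that is already used repeatedly in the paper. I would also remark that one inclusion, $\{u(x)\}_{\prime,S(A)} \subseteq \{x\}^{\prime,S(A)}$, is in fact trivial without any hypothesis on $u(x)$, since $\varphi(u(x)) = 1$ forces $|\varphi(x)| \geq |\varphi(u(x)u(x)^*x)| = 1$ by Cauchy–Schwarz applied in the GNS representation of $\tilde\varphi$; it is only the reverse inclusion that needs $u(x)$ to be a projection lying in $A^{**}$.
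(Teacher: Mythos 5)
Your proof is correct and follows essentially the same route as the paper: the first equality is \cite[Lemma 3.3 (i)]{ER4}, the second follows by restricting to states, and the third by passing to Hahn--Banach (state) extensions of states of $A$, with your observation that $\varphi(u(x)) = \lim_n \varphi(x^n) = \lim_n \tilde\varphi(x^n) = \tilde\varphi(u(x))$ supplying the bookkeeping the paper leaves implicit. One caveat: your assertion that a state of $A$ extends \emph{uniquely} to a state of $B$ is false in general (the paper itself exhibits states with several, even mutually orthogonal, pure state extensions in Section 5), but this does no harm since your argument never uses uniqueness --- any state extension works, precisely because of your limit computation.
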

\begin{proof}
The first statement was proved in  \cite[Lemma 3.3 (i)]{ER4}, and 
discussed towards the end of our introduction.
 The second is immediate from the first; and the third from the 
second by considering Hahn-Banach extensions of states.  
\end{proof}

\begin{proposition}\label{proj}
If $A$ is an approximately unital operator algebra with $a \in {\rm Ball}(A)$.  Let $B$ be a containing $C^{*}$-algebra.  Then $u(a) \in B$ is a projection (and thus lies in $A^{\perp\perp}$) if and only if $\{a\}^{\prime,S(A)}=\{a\}^{\prime,A}$.
\end{proposition}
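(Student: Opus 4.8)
The plan is to prove the two implications separately, exploiting Proposition~\ref{exp} in the forward direction and a state-extension argument in the reverse direction. Recall that we always have the containments $\{a\}^{\prime,S(A)} \subseteq \{a\}^{\prime,A}$ trivially (states lying in $\mathrm{Ball}(A^*)$), so the content is the reverse containment, together with the equivalence to $u(a)$ being a projection. First I would dispose of the easy direction: if $u(a)$ is a projection, then it lies in $A^{\perp\perp}$ and coincides with the weak* limit of $(a^n)$ by Lemma~\ref{haye}; Proposition~\ref{exp} then gives directly that $\{u(a)\}_{\prime,S(A)} = \{a\}^{\prime,S(A)}$, and I still need to identify $\{a\}^{\prime,A}$ with the same set. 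For this, note that any $f \in \{a\}^{\prime,A}$ satisfies $f(a) = 1 = \|f\|$, hence $f(a^n) = 1$ for all $n$ (by the standard fact recalled before Proposition~\ref{uconv}, or a direct spectral argument applied in $A^1$), so $f(u(a)) = 1$; since $u(a)$ is a projection in $A^{\perp\perp}$ with $u(a) \leq e = 1_{A^{**}}$, evaluating $f$ on $u(a) \leq e$ and $\|f\| = 1$ forces $f(e) = 1$, i.e.\ $f$ is a state of $A$. Thus $\{a\}^{\prime,A} \subseteq \{a\}^{\prime,S(A)}$, giving equality.

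For the converse, suppose $\{a\}^{\prime,S(A)} = \{a\}^{\prime,A}$; I want to conclude $u(a)$ is a projection. The natural route is contrapositive together with the tripotent machinery: $u(a)$ is always a tripotent in $B^{**}$, and it is a projection iff $u(a) = u(a)^*$, equivalently iff $u(a) \leq e$ in the tripotent ordering restricted appropriately — but more usefully, $u(a)$ fails to be a projection exactly when there is a functional detecting the "nonselfadjoint part." Concretely, if $u(a)$ is not a projection, then working in a $C^*$-algebra $B$ containing $A$, the element $u(a)u(a)^* \in B^{**}$ is a projection strictly dominating... hmm — the cleaner approach is: by Proposition~\ref{exp}, $\{a\}^{\prime,B} = \{u(a)\}_{\prime,B}$ always, and the face $\{u(a)\}_{\prime,B}$ of $\mathrm{Ball}(B^*)$ contains a state of $B$ (by weak* compactness and the Krein--Milman / face structure, since $u(a) \neq 0$ when $\|a\|=1$; the case $\|a\| < 1$ makes $u(a) = 0$ a projection trivially). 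So I may assume $\|a\| = 1$. Then $\{a\}^{\prime,A}$ restricted appropriately is nonempty; pick $f \in \{a\}^{\prime,A}$, which by hypothesis is a state of $A$, extend (Hahn--Banach) to a state $\tilde f$ of $B$ — here I use the standing assumption of this section that the cai of $A$ is a cai of $B$, so states of $A$ are exactly restrictions of states of $B$. Then $\tilde f(a) = 1$, so $\tilde f(u(a)) = 1$, and $\tilde f$ being a state means $\tilde f(u(a)u(a)^*) = 1$ too by Cauchy--Schwarz; running the converse-to-Cauchy--Schwarz argument in the GNS representation of $\tilde f$ (exactly as in the last paragraph of the proof of Lemma~\ref{haye}) shows the cyclic vector $\zeta$ satisfies $u(a)^*\zeta = \zeta = u(a)\zeta$.

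The crux, and what I expect to be the main obstacle, is leveraging the equality $\{a\}^{\prime,S(A)} = \{a\}^{\prime,A}$ — which is an equality of faces, not just nonemptiness of one state — to conclude that $u(a) = u(a)^*$ identically, not merely that it acts as the identity on one cyclic subspace. The idea is: the hypothesis says every $f \in \mathrm{Ball}(A^*)$ with $f(a) = 1$ is already a state. Passing to $B$, every $g \in \{u(a)\}_{\prime,B} = \{a\}^{\prime,B}$ restricts to a state of $A$, but I want all such $g$ to be states of $B$ — which should follow because $g|_A$ is a state of $A$, hence (cai assumption) $g|_A$ extends uniquely to a state of $B$, and one checks $g$ must be that extension since $g(u(a)) = 1$ pins it down on enough of $B^{**}$; alternatively, and more robustly, the face $\{u(a)\}_{\prime,B}$ of $\mathrm{Ball}(B^*)$ being contained in $Q(B) \cup$(nothing) forces, by the Edwards--Rüttimann correspondence (Theorem 4.4 of \cite{ERF}, already invoked for Proposition~\ref{uconv}), that $u(a)$ is dominated in the tripotent order by a projection of $B^{**}$ lying in its own $\{\cdot\}_{\prime}$-face inside the states — and the only tripotents all of whose norming functionals are states are the projections. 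So $u(a)$ is a projection. I would then double back to note $u(a) \in A^{\perp\perp}$ automatically since it is a weak* limit of $(a^n) \subseteq A$, completing the equivalence.
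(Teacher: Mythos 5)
Your forward direction is essentially the paper's argument and is fine: from $f(a)=1$ and $\|f\|\le 1$ one gets $f(u(a))=1$, and since $u(a)$ is a projection, restricting (an extension of) $f$ to the two-dimensional $C^*$-algebra spanned by $u(a)$ and the identity forces the value $1$ at the identity, i.e.\ $f\in S(A)$. Your phrase ``evaluating $f$ on $u(a)\le e$ and $\|f\|=1$ forces $f(e)=1$'' is exactly this two-dimensional (spectral) argument and should be spelled out, but it is standard.

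The reverse direction, however, has a genuine gap, and it sits precisely where you flagged ``the main obstacle.'' Two problems. First, your route to ``every $g\in\{u(a)\}_{\prime,B}=\{a\}^{\prime,B}$ is a state of $B$'' rests on the claim that $g|_A$, being a state of $A$, ``extends uniquely to a state of $B$'': uniqueness is false in general (Section 5 of the paper explicitly discusses states with several pure state extensions), and ``$g(u(a))=1$ pins it down'' is not an argument. What does work, and what the paper's first line implicitly uses, is: $g|_A\in\{a\}^{\prime,A}=\{a\}^{\prime,S(A)}$, so $g(e_t)\to 1$ for the cai $(e_t)$ of $A$, which by the standing assumption of this section is a cai of $B$; a functional in ${\rm Ball}(B^*)$ taking a cai of $B$ to limit $1$ is a state, whence $\{a\}^{\prime,B}=\{a\}^{\prime,S(B)}$. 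Second, and more seriously, your closing assertion that ``the only tripotents all of whose norming functionals are states are the projections'' is exactly the crux of the proposition and is asserted rather than proved; the preceding sentence about $u(a)$ being ``dominated by a projection lying in its own face'' via Edwards--R\"uttimann is not a coherent deduction. The assertion is true, but it needs the paper's argument (or an equivalent one): since states are selfadjoint, $\{a\}^{\prime,B}=\{a\}^{\prime,S(B)}$ gives $\{a\}^{\prime,B}=\{a^*\}^{\prime,B}$, i.e.\ $\{u(a)\}_{\prime,B}=\{u(a^*)\}_{\prime,B}$, and the injectivity of the tripotent--face correspondence of \cite[Theorem 4.4]{ERF} yields $u(a)=u(a^*)=u(a)^*$; then $u(a)=p-q$ for orthogonal projections $p,q$, and if $q\ne 0$ a state $f$ with $f(q)=1$ has $f(p)\le f(1-q)=0$, so $f(u(a))=-1$ and $-f\in\{a\}^{\prime,B}=\{a\}^{\prime,S(B)}$, forcing $f=0$, a contradiction. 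Your GNS/Cauchy--Schwarz digression only controls $u(a)$ on a single cyclic subspace, as you yourself note, and does not contribute to closing this gap.
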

\begin{proof} Suppose that
$u(a)$ is a projection.  If $f \in \{a\}^{\prime,A}$, then $f(u(a)) = 1$
by Proposition \ref{exp}.  Considering the restriction of $f$ to
the two dimensional $C^*$-algebra Span$\{ u(a) , 1 \}$ it is clear that $f(1) = 1$,
so $f \in S(A)$.  Thus $\{a\}^{\prime,S(A)}=\{a\}^{\prime,A}$.

Now suppose that $\{a\}^{\prime,S(A)}=\{a\}^{\prime,A}$.
Then $\{a\}^{\prime,S(B)}=\{a\}^{\prime,B}$.
Since states are selfadjoint, it follows that
 $\{a\}^{\prime,B}=\{a^{*}\}^{\prime,B}$.  By
 Theorem 4.4 of \cite{ERF} we deduce that $u(a)=u(a^{*})=u(a)^{*}$.
So $u(a)$ is a selfadjoint partial isometry, and hence
by the spectral theorem equals $p-q$ for mutually orthogonal projections
$p, q \in B^{**}$.  If $q \neq 0$ there is a state $f$ on $B$ with
$f(q) = 1$.  Thus $0 \leq f(p) \leq f(1-q) = 0$.  So $f(p) = 0$
and $f(u(a)) = -f(q) = -1$, which by hypothesis forces $-f \in S(B)$.
Thus $f = 0$, which is a contradiction.   \end{proof}

Most of the following may be deduced from Corollary 4.4 of \cite{ER4} together with our Theorem 2.2, but we include a 
direct  proof.

\begin{proposition}\label{wso}
If $A$ is an approximately unital operator algebra and $q$ is a projection in $A^{**}$, then the following conditions are equivalent: 
\begin{enumerate} \item [{\rm (1)}] 
 $q$ is compact in $A^{**}$,
  \item [{\rm (2)}]  The face $\{q\}_{\prime, S(A)} = F_{1-q} \cap S(A)$ is weak* semiexposed,
\item [{\rm (3)}] The face $\{q\}_{\prime, S(A)} = F_{1-q} \cap S(A)$ is weak* closed in 
$S(A)$.
 \end{enumerate}
\end{proposition}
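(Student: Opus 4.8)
The plan is to prove the cycle $(1) \Rightarrow (2) \Rightarrow (3) \Rightarrow (1)$. The implication $(2) \Rightarrow (3)$ is immediate from the remark in the text that weak* semiexposed faces are weak* closed in $S(A)$, so no work is needed there.

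For $(1) \Rightarrow (2)$: suppose $q$ is compact in $A^{**}$. By Theorem \ref{inco}(1), $q = \inf_t q_t$ for a decreasing net of peak projections $q_t$, and each $q_t = u(a_t)$ for some $a_t \in \frac{1}{2}\mathfrak{F}_A \subseteq {\rm Ball}(A)$ by Theorem \ref{inco}(3) (or one could work directly from the construction in the proof of Theorem \ref{inco}). I claim $\{q\}_{\prime,S(A)} = \bigcap_t \{a_t\}^{\prime,A}$, which exhibits it as weak* semiexposed. Indeed, since $q_t = u(a_t)$ is a projection, Proposition \ref{exp} gives $\{a_t\}^{\prime,S(A)} = \{u(a_t)\}_{\prime,S(A)} = \{q_t\}_{\prime,S(A)}$, and Proposition \ref{proj} gives $\{a_t\}^{\prime,A} = \{a_t\}^{\prime,S(A)}$; so $\{a_t\}^{\prime,A} = \{q_t\}_{\prime,S(A)} = \{f \in S(A) : f(q_t) = 1\}$. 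Now a state $f$ satisfies $f(q_t) = 1$ for all $t$ iff $f$ annihilates $1-q_t$ for all $t$; since $1-q_t \nearrow 1-q$ weak* in $A^{**}$ (an increasing net of positive elements in $B^{**}$ converging weak* to its supremum), and $f \geq 0$ as a state, $f(1-q_t) = 0$ for all $t$ iff $f(1-q) = 0$, i.e. iff $f \in \{q\}_{\prime,S(A)}$. This gives the desired equality of faces.

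For $(3) \Rightarrow (1)$: suppose $F := \{q\}_{\prime,S(A)}$ is weak* closed in $S(A)$. The goal is to produce $a \in {\rm Ball}(A)$ with $qa = q$, or equivalently (easier to locate via the dual picture) to show $q$ is closed in $(A^1)^{**}$ and then invoke the compactness criterion. The cleanest route is to pass to the quasistate space: since the cai of $A$ is a cai for $B$, $Q(A)$ is weak* compact, and $F_{1-q} = \{\varphi \in Q(A) : \varphi(1-q) = 0\} = \{\varphi \in Q(A) : \varphi(q) = \|\varphi\|\}$. One shows $F_{1-q}$ is the weak* closure of the cone $[0,1]\cdot F$ over $F$: a net $t_\mu f_\mu$ with $f_\mu \in F$, $t_\mu \in [0,1]$ has, passing to a subnet, $t_\mu \to t$ and $f_\mu \to f$ weak* with $f \in S(A)\cup\{0\}$; if $t > 0$ then $f \in S(A)$ and $f(q) = 1$ forces $f \in F$ (here weak* closedness of $F$ in $S(A)$ is used, after noting $f$ lies in $S(A)$ precisely because $t>0$), so $tf \in [0,1]\cdot F \subseteq F_{1-q}$; if $t = 0$ the limit is $0 \in F_{1-q}$. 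Hence $F_{1-q}$ is weak* closed in $Q(A)$, so by the approximately-unital version of \cite[Theorem 4.1]{BHN} recalled at the start of this section, $q$ is closed in $A^{**}$; but we need the stronger conclusion.

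The main obstacle is exactly this last gap: weak* closedness of $F_{1-q}$ in $Q(A)$ gives $q$ closed, not compact. To bridge it, I would argue that the weak* closedness of $F$ \emph{in $S(A)$} (as opposed to in $Q(A)$) forces a uniform separation of $F$ from $0$: concretely, because $F$ is weak* compact and $0 \notin S(A)$, while $F_{1-q} = [0,1]\cdot F$ is weak* compact, the point $0$ is not a weak* limit of states in $F$, so there is an $a \in A$ and $\delta > 0$ with ${\rm Re}\, f(a) \geq \delta$ (even $=1$ after scaling, using that $F$ is a face determined by $f(q)=1$) for all $f \in F$ while this fails near $0$; passing through the identification of faces with tripotents/projections via \cite[Theorem 4.4]{ERF} as in Proposition \ref{proj}, one extracts $a \in {\rm Ball}(A)$ with $\{a\}^{\prime,A} \supseteq F$ and $u(a)$ a projection dominating — in fact equal after intersecting — the relevant projection, yielding $qa = q$. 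Alternatively, and perhaps more robustly, invoke Corollary 4.4 of \cite{ER4} together with Theorem \ref{perun}, as the text hints: that result characterizes compact tripotents (here projections) in a $C^*$-algebra $B$ by weak* closedness of the associated face in $S(B)$; combined with the fact that $\{q\}_{\prime,S(A)}$ weak* closed in $S(A)$ lifts, via Hahn–Banach, to $\{q\}_{\prime,S(B)}$ weak* closed in $S(B)$ (this needs a short compactness argument), one gets $q$ compact in $B^{**}$, hence compact in $A^{**}$ by Theorem \ref{perun}. I expect the honest work to be concentrated in verifying this lifting step and in the $S(A)$-versus-$Q(A)$ distinction.
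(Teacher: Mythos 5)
Your proposal (in its final form) is correct and essentially the same as the paper's proof: (1)$\Rightarrow$(2) via the decreasing net of peak projections from Theorem \ref{inco} combined with Propositions \ref{exp} and \ref{proj}, and for (3)$\Rightarrow$(1) your ``alternative'' route --- lift weak* closedness of the face to $S(B)$ by a restriction/Hahn--Banach argument, apply the $C^*$-algebraic characterization of compactness (the paper cites \cite[Lemma 2.4]{AAP}; \cite[Corollary 4.4]{ER4} serves equally, as the paper itself notes), then conclude with Theorem \ref{perun} --- is exactly what the paper does. The preliminary $Q(A)$ detour and the vague ``uniform separation from $0$'' sketch should simply be dropped, since, as you yourself observe, they only yield that $q$ is closed, and they play no role once you pass to the $S(B)$ argument.
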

\begin{proof}  (1) $\Rightarrow$ (2) \  Suppose that $q$ is compact. By Theorem 
\ref{inco}, $q$ is a decreasing weak* limit of  projections $u(a_{\mu}) \in A^{**}$,
with $a_{\mu} \in {\rm Ball}(A)$. If $f \in S(A)$ and $f(u(a_{\mu})) = 1$ 
for all $\mu$, then in  the limit we have $f(q)=1$.
Conversely, if $f$ lies in $\{q\}_{\prime,S(A)}$, then 
$$1 = f(1) \ge f(u(a_{\mu})) \ge f(q) = 1. $$
 Hence $f$ lies in $\cap \{ a_{\mu} \}^{\prime,S(A)}$.  
Thus $\cap \{ a_{\mu} \}^{\prime,S(A)} =\cap \{ a_{\mu} \}^{\prime,A} = \{q\}_{\prime.S(A)}$, and so $\{q\}_{\prime,S(A)} $ is weak* semiexposed. 

(2)  $\Rightarrow$ (3) \ Obvious.

(3) $\Rightarrow$ (1) \ Considering Hahn-Banach extensions as in the proof of \cite[Theorem 4.1]{BHN}, it is clear that $\{q\}_{\prime,S(B)}$ is weak* closed in $S(B)$ for a containing
 $C^{*}$-algebra $B$. Hence $q$ is compact in $B^{**}$ by Lemma 2.4 of \cite{AAP}, and hence is compact in $A^{**}$
by Theorem \ref{perun}. 
\end{proof}  

{\bf Remark.}
 (1) \     As noted in \cite{BHN}, there may be
lots of weak* closed faces of $S(A)$ that are not of the form $\{ p \}_{\prime,S(A)}$.

(2)  \ Of course a compact projection $q$ equals 
$u(a)$ for some $a \in {\rm Ball}(A)$ iff $\{ q \}_{\prime,S(A)}$
is weak* exposed.  Also, this is equivalent to saying that 
the infimum of the $\{ u(a_{\mu}) \}$ appearing in the proof of 
(2) in the last result, equals 
$u(a)$ for some $a \in {\rm Ball}(A)$ (since as we discussed 
above, infima of projections correspond to intersections
of the matching faces \cite[Corollary 4.4]{ER4}).  
We also remark that Theorem \ref{inco} (3) is saying that 
if $b \in {\rm Ball}(A)$ and
$\{ b \}^{\prime,S(A)} = \{ b \}^{\prime,A}$,
 then $\{ b \}^{\prime,S(A)} = \{ a \}^{\prime,S(A)}$ for
some $a \in \frac{1}{2} \mathfrak{F}_A$.

(3) \ It is easy to see that the correspondence $\{ q \} \mapsto \{ q \}_{\prime,S(A)}$ is bijective
and order-preserving (by using the $C^*$-algebra case of this applied to Hahn-Banach extensions of states
of $A$).  

\medskip

 Note that a state $\varphi$ of $A$ achieves its norm
at an element $a$ of $\frac{1}{2} \mathfrak{F}_A$ iff there exists a
compact projection $q \in A^{**}$ with $\varphi(q) = 1$.
To see the one direction of this set $q = u(a)$ and use
the fact above Proposition \ref{uconv}.  For the other direction,
if $q \leq u(a)$ for $a \in \frac{1}{2} \mathfrak{F}_A$, then
$1 = \varphi(q) \leq \varphi(u(a)) \leq 1$, so that $\varphi(u(a)) = \varphi(a)  = 1$.
Rephrasing this, we have:

\begin{corollary} \label{okc}  If $\varphi \in S(A)$, 
then $\varphi$ achieves its norm
at an element $a$ of $\frac{1}{2} \mathfrak{F}_A$ iff
 there exists   
a projection $q \in A^{**}$ with $\{ q \}_{\prime,S(A)}$ weak* 
closed and containing $\varphi$. 
\end{corollary}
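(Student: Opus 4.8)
The plan is to assemble what is already in hand: the displayed paragraph immediately preceding the statement shows that, for $\varphi \in S(A)$, the state $\varphi$ attains its norm at some $a \in \frac{1}{2}\mathfrak{F}_A$ if and only if there is a \emph{compact} projection $q \in A^{**}$ with $\varphi(q) = 1$; and Proposition \ref{wso} lets us rewrite ``$q$ compact'' as ``$\{q\}_{\prime,S(A)}$ weak* closed in $S(A)$''. So the corollary is the combination of these two facts, and the only real work is to state the translation carefully and to make sure the projection witnessing one condition is the projection witnessing the other.

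Concretely I would spell out both implications. For the forward direction, given $a \in \frac{1}{2}\mathfrak{F}_A$ with $\varphi(a) = 1$ (which follows from $\varphi$ attaining its norm at $a$, since $\Vert 1-2a\Vert \le 1$ forces $\varphi(a) = 1$ once $\vert\varphi(a)\vert = 1$), I set $q = u(a)$: this is a projection in $A^{**}$ by Corollary \ref{inc}, it is a peak projection and hence compact, and $\varphi(q) = \varphi(u(a)) = 1$ by the fact recorded just above Proposition \ref{uconv} that $\psi(x) = 1$ iff $\psi(u(x)) = 1$. For the reverse direction, given a compact $q \in A^{**}$ with $\varphi(q) = 1$, Theorem \ref{perun} provides $a \in \frac{1}{2}\mathfrak{F}_A$ with $q = qa$; by the elementary remark above Theorem \ref{peakch} we get $q \le u(a)$, and since $u(a)$ is again a projection (Corollary \ref{inc}), $u(a) - q$ is a projection, so $1 = \varphi(q) \le \varphi(u(a)) \le 1$, giving $\varphi(u(a)) = 1$ and hence $\varphi(a) = 1$. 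Thus $\varphi$ attains its norm at $a \in \frac{1}{2}\mathfrak{F}_A$.

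Finally I would do the facial bookkeeping: since $\{q\}_{\prime,S(A)} = F_{1-q}\cap S(A)$ and $\varphi \in S(A)$, the condition $\varphi(q) = 1$ says exactly that $\{q\}_{\prime,S(A)}$ contains $\varphi$, while by the equivalence (1)$\Leftrightarrow$(3) of Proposition \ref{wso} the projection $q$ is compact in $A^{**}$ precisely when $\{q\}_{\prime,S(A)}$ is weak* closed in $S(A)$. Substituting these into the equivalence of the previous paragraph yields the corollary verbatim. I do not expect any serious obstacle here — this is genuinely a corollary of Theorem \ref{inco}, Theorem \ref{perun}, and Proposition \ref{wso} — and the only point deserving a sentence of care is that the translation must be two-sided, i.e. that the same $q$ that witnesses weak* closedness of the face also witnesses norm attainment, which the argument above arranges through $\mathfrak{F}$-compactness and the inequality $q \le u(a)$.
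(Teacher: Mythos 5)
Your argument is correct and is essentially the paper's own proof: the paper establishes the same equivalence in the paragraph just before the corollary by taking $q = u(a)$ (with the fact above Proposition \ref{uconv}) in one direction, and using $\mathfrak{F}$-compactness to get $a \in \frac{1}{2}\mathfrak{F}_A$ with $q \le u(a)$ and then $1 = \varphi(q) \le \varphi(u(a)) \le 1$ in the other, before rephrasing via Proposition \ref{wso}. Your extra details (the remark above Theorem \ref{peakch}, Corollary \ref{inc}, and the observation that $\Vert 1-2a\Vert \le 1$ forces $\varphi(a)=1$) are accurate but do not change the route.
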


Following \cite{AP} we have the following:

\begin{proposition} \label{2wfao}  
If $A$ is an approximately unital operator algebra and $p, q$ are projections in $A^{**}$, with 
$p$ open and $q$ compact, and $q \leq p$, then
$[p,q]_A = \{ a \in  \frac{1}{2} \mathfrak{F}_A : a q = q, ap = a \}$ is a  (nonempty)  norm closed
face of $\frac{1}{2} \mathfrak{F}_A$.
\end{proposition}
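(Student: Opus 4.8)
The plan is to reformulate membership in $[p,q]_A$ so that convexity arguments apply, and then to establish nonemptiness, norm-closedness, and the face property in turn. For $a \in \frac{1}{2}\mathfrak{F}_A$ I would first record two equivalences: the condition $aq = q$ is equivalent to $q \leq u(a)$, and $ap = a$ is equivalent to $s(a) \leq p$. Indeed, if $aq = q$ then $a^n q = q$ for all $n$ and $a^n \to u(a)$ weak* by Corollary \ref{inc}, so $u(a) q = q$; conversely $q \leq u(a)$ gives $aq = (a u(a)) q = u(a) q = q$ using $a u(a) = u(a)$ from Lemma \ref{haye} (via Corollary \ref{inc}). Likewise $ap = a$ says $a(1-p) = 0$, and since $s(a)$ is the right support of $a$ and lies in $\overline{aAa}$, this is equivalent to $s(a)(1-p) = 0$, i.e.\ $s(a) \leq p$. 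Thus $[p,q]_A = \{ a \in \frac{1}{2}\mathfrak{F}_A : q \leq u(a),\ s(a) \leq p \}$. Nonemptiness is then immediate from Theorem \ref{bncu}, which delivers $b \in \frac{1}{2}\mathfrak{F}_A$ with $q \leq u(b) \leq s(b) \leq p$. Norm-closedness is routine, since each of the conditions $\Vert 1 - 2a\Vert \leq 1$, $aq = q$, $ap = a$ passes to norm limits.

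For the face property, suppose $a = t a_1 + (1-t) a_2 \in [p,q]_A$ with $t \in (0,1)$ and $a_1, a_2 \in \frac{1}{2}\mathfrak{F}_A$; I must show $a_1, a_2 \in [p,q]_A$. The main tool is that Proposition \ref{uconv} holds verbatim with the midpoint $\frac{x+y}{2}$ replaced by any convex combination $tx + (1-t)y$ with $t \in (0,1)$: the proof is unchanged, since a functional in the unit ball of $B^*$ satisfies $\varphi(tx+(1-t)y) = 1$ exactly when $\varphi(x) = \varphi(y) = 1$, so that $\{tx+(1-t)y\}^{\prime,B} = \{x\}^{\prime,B} \cap \{y\}^{\prime,B}$, and the order isomorphism of \cite{ERF} identifies this intersection with $\{u(x)\wedge u(y)\}_{\prime}$. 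Applying this in $B$ (noting $u(a)$ and $u(a_i)$ are projections in $A^{**}$ by Corollary \ref{inc}) gives $u(a) = u(a_1) \wedge u(a_2)$, so $q \leq u(a) \leq u(a_i)$, hence $a_i q = q$ by the first paragraph.

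It remains to prove $s(a_i) \leq p$, and this I expect to be the main obstacle: unlike the $C^*$-case, where $t a_1 \leq a$ forces $s(a_1) \leq s(a)$ at once, there is no direct comparison available here. The device is to pass to complements. Set $c = 1 - a$ and $c_i = 1 - a_i$, which lie in $\frac{1}{2}\mathfrak{F}_{A^1}$ and satisfy $c = t c_1 + (1-t) c_2$. The proof of Corollary \ref{inc} (through \cite[Proposition 6.7]{BHN}) shows that for $d \in \frac{1}{2}\mathfrak{F}_{A^1}$ one has $u(d) = 1 - s(1-d)$, i.e.\ $s(1-d) = u(d)^{\perp}$; hence $s(a) = u(c)^{\perp}$ and $s(a_i) = u(c_i)^{\perp}$, all supports being computed in $(A^1)^{**}$, which agrees with $A^{**}$ on elements of $A$. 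Applying the convex-combination form of Proposition \ref{uconv} in the second dual of a $C^*$-algebra containing $A^1$, we get $u(c) = u(c_1) \wedge u(c_2) \leq u(c_i)$, and taking complements yields $s(a_i) = u(c_i)^{\perp} \leq u(c)^{\perp} = s(a) \leq p$. Therefore $a_i p = a_i$, so $a_i \in [p,q]_A$, and $[p,q]_A$ is a nonempty norm closed face of $\frac{1}{2}\mathfrak{F}_A$.
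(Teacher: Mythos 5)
Your proof is correct, but it takes a genuinely different route from the paper's. You reformulate the two defining conditions as order relations, $aq=q \Leftrightarrow q\le u(a)$ and $ap=a \Leftrightarrow s(a)\le p$, and then use the fact that $u(\cdot)$ converts convex combinations into infima; your extension of Proposition \ref{uconv} from midpoints to general $t\in(0,1)$ is valid, since $\varphi(tx+(1-t)y)=1$ for contractive $\varphi$ still forces $\varphi(x)=\varphi(y)=1$, and the Edwards--R\"uttimann correspondence does the rest. For the $p$-condition you rely on $s(x)^{\perp}=u(1-x)$ for $x\in\frac{1}{2}\mathfrak{F}_{A^1}$, which is most cleanly cited as \cite[Proposition 2.22]{BRead} (it is invoked this way in the proof of Theorem \ref{inco}) rather than extracted from the proof of Corollary \ref{inc}, though that extraction also works once one knows the support projection of $\overline{(1-x)A^1}$ is $s(1-x)$. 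The paper argues quite differently and more elementarily: passing to $A^{**}$ one may assume $A$ unital, and if $\frac{1}{2}(x+y)p=\frac{1}{2}(x+y)$ one compresses by $p^{\perp}$, takes real parts, and uses that ${\rm Re}\,x\ge 0$ and $x^*x\le x+x^*$ for $x\in\mathfrak{F}_A$ to conclude $xp^{\perp}=0$; the condition at $q$ then follows by the symmetry $x\mapsto 1-x$, $p\mapsto 1-q$. Thus the paper needs no peak or support machinery at all (openness of $p$ and compactness of $q$ enter only for nonemptiness, exactly as in your argument via Theorem \ref{bncu}), while your route buys the conceptual description $[p,q]_A=\{a\in\frac{1}{2}\mathfrak{F}_A: q\le u(a),\, s(a)\le p\}$, from which the face property is transparent. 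One small slip to correct: $s(a)$ is the support projection of the HSA $\overline{aAa}$ (it lies in its weak* closure in $A^{**}$), not an element of $\overline{aAa}$; the fact you actually need --- that $s(a)$ is the right support projection of $a$, so $ap=a$ iff $s(a)\le p$ --- is stated in the introduction and is all that should be quoted there.
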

\begin{proof}   Clearly $[p,q]_A$ is  norm closed, and is nonempty by one of
 our Urysohn lemmas.   That $p$ is open and $q$ compact is only needed 
to get $[p,q]_A$ nonempty.   
Thus for the rest, we may assume that $A$ is unital and $p,q \in A$, by 
moving to  $A^{**}$.    In this case, if $x,y \in \frac{1}{2} \mathfrak{F}_A$ with $\frac{1}{2}(x+y) p = \frac{1}{2}(x+y)$,
then $\frac{1}{2}(p^\perp x p^\perp + p^\perp y p^\perp) = 0$.   Taking real parts,
$\frac{1}{2}(p^\perp (x + x^*) p^\perp + p^\perp  (y + y^*)  p^\perp) = 0$.
Since the real part of an element of  $\mathfrak{F}_A$ is positive, we deduce that
$p^\perp (x + x^*) p^\perp = 0$.     However if $x \in  \mathfrak{F}_A$ then 
it is easy to see that $x^* x  \leq x + x^*$, thus  $p^\perp  x^* x p^\perp \leq p^\perp (x + x^*) p^\perp = 0$.
Hence $x  p^\perp = 0$, so that $x p = p$.  Similarly $y p = p$.    By symmetry, replacing all elements by 
$1$ minus the element, we see that $(1-x) (1-q) = 1-x$, or $x q = q$.  Similarly $yq = q$.  
\end{proof}

Of course every $a \in  \frac{1}{2} \mathfrak{F}_A$ determines a face
$[u(a),s(a)]_A$ in which it lives.   The converse of the last result is false, 
namely that a norm closed
face of $\frac{1}{2} \mathfrak{F}_A$ need not equal 
$[q,p]_A$ for some  projections in $A^{**}$, with $p$ open and $q$ compact (in 
contrast to the situation for faces of the positive part of the unit ball in
a $C^*$-algebra \cite{AP}).  However it  might be  
interesting to  characterize such faces.

\medskip

{\bf Remark.}
See e.g.\ \cite{EF,FP2} for the characterizations 
of norm closed (resp.\ weak* closed) faces of the unit ball of a  JB$^*$-triple
(resp.\ its dual).  This is  much more difficult than the 
$C^*$-algebra case from \cite{AP}.  See \cite{ERF} for the (earlier) case of
weak* closed (resp.\ norm closed) faces of the unit ball of
JBW$^*$-triple (resp.\ its predual).  Also
see \cite{Neal}, for the earlier 
characterization of w*-closed faces of the quasi-state space
of a JB-algebra.

\section{Pure states and minimal projections}

In this section again $A$ is a closed subalgebra of a $C^*$-algebra $B$, and
we assume that $A$ has a  cai which is also a cai for $B$,
so that $1_{B^{**}} =
1_{A^{**}}$.  
We refer the reader to e.g.\ \cite{Ped} for the well known correspondences between pure states on a $C^*$-algebra $B$,
minimal projections in $B^{**}$, and maximal left ideals in $B$.   

\begin{proposition}  A minimal projection in $B^{**}$
which is also in $A^{\perp \perp}$, is
compact in $A^{**}$.
\end{proposition}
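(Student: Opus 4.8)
The plan is to reduce the statement immediately to the $C^*$-algebraic setting. Let $q$ be the given minimal projection of $B^{**}$. Since $q\in A^{\perp\perp}=A^{**}$, it is in particular a projection in $A^{**}$, so by Theorem~\ref{perun} (the equivalence of (i) and (iii) there) it suffices to show that $q$ is compact in $B^{**}$.

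For this I would invoke the facial description of compactness in a $C^*$-algebra already used in the proof of Proposition~\ref{wso}, namely \cite[Lemma 2.4]{AAP}: a projection $q\in B^{**}$ is compact in $B^{**}$ if and only if the face $\{q\}_{\prime,S(B)}=\{f\in S(B):f(q)=1\}$ is weak* closed in $S(B)$. The key point is that minimality of $q$ makes this face as small as possible. Recalling the correspondence (see \cite{Ped}) between minimal projections of $B^{**}$ and pure states of $B$, the projection $q$ is the support projection of a pure state $\varphi$, so $\varphi\in\{q\}_{\prime,S(B)}$; and any state $f$ of $B$ with $f(q)=1$ satisfies $f(x)=f(qxq)$ for all $x\in B^{**}$ by the Cauchy--Schwarz inequality, while $qxq\in qB^{**}q=\Cdb q$ by minimality, so $f$ is uniquely determined and hence equals $\varphi$. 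Thus $\{q\}_{\prime,S(B)}=\{\varphi\}$ is a singleton, hence trivially weak* closed in $S(B)$, so $q$ is compact in $B^{**}$. Applying Theorem~\ref{perun} once more gives that $q$ is compact in $A^{**}$.

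There is no serious obstacle here: essentially everything is either a classical fact about pure states and minimal projections in $C^*$-algebras, or has already been established in the paper. The only mild subtlety is that the facial criterion is available for the containing $C^*$-algebra rather than for $A$ itself, so the passage through $B^{**}$ via Theorem~\ref{perun} is what does the real work; one should also note at the outset that the hypothesis $q\in A^{\perp\perp}$ is exactly what is needed even to speak of $q$ being compact ``in $A^{**}$''. An alternative, slightly more hands-on route would be to produce directly an $a\in{\rm Ball}(B)$ with $qa=q$ from the pure state $\varphi$ (for instance via an element of $B$ on which $\varphi$ nearly attains its norm), but the facial argument above is cleaner given the tools assembled in Section~4.
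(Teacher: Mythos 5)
Your proposal is correct and takes essentially the same route as the paper: reduce, via Theorem \ref{perun}, to showing $q$ is compact in $B^{**}$ and then invoke the $C^*$-algebraic case. The only difference is that the paper simply cites that $C^*$-algebra fact, whereas you reprove it by observing that minimality forces $\{q\}_{\prime,S(B)}$ to be a singleton and then applying \cite[Lemma 2.4]{AAP} (exactly as the paper itself uses that lemma in Proposition \ref{wso}); this is a harmless, correct elaboration rather than a different argument.
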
 \begin{proof}  This follows from
Theorem \ref{perun} (i) and the $C^*$-algebra case of the result.
 \end{proof}

{\bf Remark.}
   If $r$ is a minimal
projection
 in $B^{**}$ with $r \in A^{\perp \perp}$, then $(1-r) A^{**} \cap A$
is a maximal r-ideal (that is, right ideal with
left cai) of $A$ (and equals
$\{ a \in A : \varphi(a^* a) = 0 \}$ where $\varphi$
is the pure state on $B$ associated with $r$).   This is because any proper r-ideal
$J$ of $A$ containing $(1-r) A^{**} \cap A$ must satisfy
$(1-r) A^{**} \subset J^{\perp \perp} \subset A^{**}$.
The support projection of $J$ is $\geq e-r$, and hence must equal
$e-r$ since $r$ is minimal and $J$ is proper.  Thus $J = (1-r) A^{**} \cap A$.
The converse is false in general: maximal r-ideals
need not be associated with minimal
projections or with pure states.

\medskip

As we said in the introduction,
states of $A$ are precisely the restrictions to
$A$ of states on $B$ (see \cite[2.1.19]{BLM}).
Also every extreme point of $S(A)$ is  the restriction to $A$ of
a pure state on $B$.  To see this,
if $\psi$ is an extreme point of $S(A)$, then the
set of Hahn-Banach extensions of $\psi$ is a convex weak*
closed subspace of $S(B)$, hence it has an extreme point,
which is easily seen to be an extreme point of $S(B)$.

In the following, oa$(a)$ is the operator algebra generated by $a$.  This
has a cai if $a \in  \frac{1}{2} \mathfrak{F}_A$ (see \cite{BRead}).

\begin{proposition} \label{iswha}  Let $a \in {\rm Ball}(A)$,
and suppose that  $u(a)$ is a projection.
Then $a$ achieves its norm at 
a pure state $\varphi$ of $B$, and at an 
extreme point of $S(A)$ (even at $\varphi_{\vert A}$). 
Also, there exists a minimal
projection
$r$ in $B^{**}$ such that $ar = r$.
If $B = C^*({\rm oa}(a))$, then there exists a unique 
pure state $\psi$ of $B$ with $\psi(a) = 1$, and this is a character (homomorphism).
\end{proposition}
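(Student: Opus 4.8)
The plan is to exploit the fact that $u(a)$ is a projection in $A^{\perp\perp}$, together with the peak-projection machinery of Section 3 and the facial correspondence of Section 4. First I would set $q = u(a)$; by Corollary \ref{inc}-type reasoning (or directly by Lemma \ref{haye}, since $u(a)$ is a projection) we know $q$ is a closed, indeed compact, projection in $A^{**}$, with $aq = q$. By the fact recalled just above Proposition \ref{uconv}, there is a state $\varphi$ of $B$ with $\varphi(q) = 1$; for any such state $\varphi(a) \geq \varphi(u(a)) = 1$ after the usual two-dimensional spectral-theory argument on $\mathrm{span}\{u(a),1\}$, so $\varphi(a) = 1$ and $a$ attains its norm at $\varphi$. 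To get this at a \emph{pure} state, note that $\{q\}_{\prime,S(B)} = \{\psi \in S(B) : \psi(q) = 1\}$ is a weak* closed (hence weak* compact) face of $S(B)$; being a nonempty face of a compact convex set it has an extreme point $\varphi$, which is then an extreme point of $S(B)$, i.e.\ a pure state, and still satisfies $\varphi(a) = 1$. The minimal projection $r$ is then the support projection in $B^{**}$ of this pure state $\varphi$ (equivalently $r = u(a')$ for a suitable scalar multiple, but more simply the standard $C^*$ correspondence between pure states and minimal projections in $B^{**}$, see \cite{Ped}); since $\varphi(r) = 1$ and $r \leq q = u(a)$ (because $r \leq$ the support of the face $\{r\}_{\prime} \subset \{q\}_{\prime}$), we get $ar = aqr = qr = r$. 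For the extreme point of $S(A)$, restrict: $\varphi_{|A} \in S(A)$ attains its norm at $a$, and restriction maps the extreme points of the face of Hahn--Banach extensions onto an extreme point of $S(A)$ as discussed in the paragraph preceding the proposition; concretely $\varphi_{|A}$ itself works, since any convex decomposition of it in $S(A)$ lifts (uniquely, because $\varphi_{|A}$ is a state attaining norm $1$ at $a$ forces the pieces to also attain $1$) to a decomposition of $\varphi$ in $S(B)$, contradicting purity of $\varphi$.

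For the final assertion, suppose $B = C^*(\mathrm{oa}(a))$. Existence of a pure state $\psi$ with $\psi(a) = 1$ is what we just proved. For uniqueness: if $\psi_1,\psi_2$ are pure states of $B$ with $\psi_i(a) = 1$, then $\psi_i(u(a)) = 1$, so both lie in the face $\{u(a)\}_{\prime,S(B)}$; since $u(a)$ is a \emph{minimal} projection in this situation — which I would argue from the separability/commutativity of the setup, or more safely from the fact that $u(a) B^{**} u(a)$ is one-dimensional, which I must establish — this face is a single point, forcing $\psi_1 = \psi_2$. Finally, to see $\psi$ is a character: $\psi$ restricted to $\mathrm{oa}(a)$ is a character of $\mathrm{oa}(a)$ iff $\psi$ is multiplicative on $\mathrm{oa}(a)$; since $\psi(a) = 1 = \psi(a)^2$ and $\psi$ is a state attaining its norm at the contraction $a$, one has $a - q$ in the kernel of the GNS representation vector for $\psi$ in the appropriate sense, giving $\psi(a^n) = 1$ and more generally $\psi(p(a)) = p(1)$ for polynomials $p$; by continuity $\psi$ agrees with evaluation-at-$1$ on $\mathrm{oa}(a)$, which is multiplicative, and then $\psi$ extends to a $*$-character of $B = C^*(\mathrm{oa}(a))$.

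The main obstacle I expect is the uniqueness and character step: showing that when $B = C^*(\mathrm{oa}(a))$ the projection $u(a)$ is minimal in $B^{**}$ (equivalently that there is a \emph{unique} pure state peaking at it). The natural route is to pass to $\mathrm{oa}(a)^{**} \subset B^{**}$ and use that $u(a)$ is a peak projection for $a$ inside the separable algebra $\mathrm{oa}(a)$, then invoke Lemma \ref{haye}(4) — a pure state $\varphi$ of $B$ with $\varphi(u(a)) = 0$ has $\varphi(a^*a) < 1$ — to pin down the states that do \emph{not} annihilate $u(a)$; combined with the GNS picture for a pure state and the generation hypothesis $B = C^*(\mathrm{oa}(a))$, the cyclic vector is determined by its behaviour on $a$, forcing uniqueness. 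Care is needed because $u(a)$ need not be minimal in $B^{**}$ for general $a \in \mathrm{Ball}(A)$ (only the generation hypothesis saves us here), so the argument genuinely uses $B = C^*(\mathrm{oa}(a))$, not merely $B \supseteq A$.
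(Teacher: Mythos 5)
Your first three assertions are built in the opposite order from the paper's argument, and this is where the proposal breaks. The paper first produces a state of $A$ with value $1$ at $a$ (this is where the hypothesis that $u(a)$ is a projection enters, via Proposition \ref{proj}), takes an extreme point $\varphi$ of the weak* closed convex set $E=\{\rho\in S(A):\rho(a)=1\}$ inside the weak* compact set $Q(A)$, checks directly that $\varphi$ is extreme in $S(A)$, and only then extends $\varphi$ to a pure state of $B$. You instead produce a pure state of $B$ first and claim that its restriction to $A$ is extreme in $S(A)$ by ``lifting'' a convex decomposition of $\varphi_{\vert A}$ to a decomposition of $\varphi$. That lifting is not available: a state of $A$ attaining $1$ at $a$ need not have a unique state extension to $B$, and even when extensions exist their average need not be $\varphi$. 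In fact the claim fails for your choice of $\varphi$: take $A$ the unital algebra of upper triangular $2\times 2$ matrices with equal diagonal entries (the example the paper mentions just before this proposition), $B=C^*(A)=M_2$, $a=1$, so $u(a)=1$ and your face is all of $S(B)$; the vector state at $e_1$ is an extreme point of this face and attains $1$ at $a$, but its restriction to $A$ is the centre of the disk $S(A)\cong\{f_z:|z|\le \tfrac12\}$, hence not extreme, and it has many state extensions to $M_2$. So the parenthetical ``even at $\varphi_{\vert A}$'' genuinely depends on choosing $\varphi$ as a pure state extension of an extreme point of $S(A)$, not the other way round. (Your production of a pure state of $B$ attaining $1$ at $a$, and of the minimal projection $r$ with $ar=r$, is essentially fine, except that weak* compactness should be obtained by viewing the face inside the weak* compact set $Q(B)$, using compactness of $u(a)$, since $S(B)$ itself is not weak* compact when $B$ is nonunital.)

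The final part is also left with its key point unproved. You acknowledge that the crucial fact --- that when $B=C^*({\rm oa}(a))$ the projection $u(a)$ is minimal in $B^{**}$, equivalently $u(a)B^{**}u(a)=\Cdb u(a)$ --- ``must be established,'' and it never is; and the step ``then $\psi$ extends to a $*$-character of $B$'' is not automatic, since a character of a nonselfadjoint subalgebra need not extend multiplicatively to the generated $C^*$-algebra (evaluation at an interior point of the disk on the disk algebra inside $C(\Tdb)$ does not). The paper closes both issues at once by noting that $au(a)=u(a)a=u(a)$, so $u(a)$ commutes with $a$ and $a^*$ and absorbs every word in them; hence $u(a)$ is central and minimal in $C^*({\rm oa}(a))^{**}$, the formula $\varphi_a(x)u(a)=xu(a)$ defines a character with $\varphi_a(a)=1$, and any pure state $\psi$ with $\psi(a)=1$ has its associated minimal projection $r$ satisfying $r\le u(a)$, whence $r=u(a)$ and $\psi=\varphi_a$. (Alternatively one can run the GNS argument you gesture at: $\pi(a)\xi=\xi$ forces $\pi(a)^*\xi=\xi$, so $\xi$ is a joint eigenvector for $\pi(B)$, giving multiplicativity and uniqueness simultaneously --- but some such argument has to be written out, not merely indicated.)
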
 \begin{proof}   Basic functional
analysis tells us that there is 
a functional that achieves its norm at $a$.
By Proposition \ref{proj}, there is a state which 
achieves its norm at $a$.   The set $E$ of states taking value $1$ at $a$ is
a  weak* closed convex subset of $Q(A)$, so it has an extreme point 
$\varphi$ say.  If $\varphi = \frac{1}{2} (\varphi_1 +
\varphi_2)$ for states $\varphi_k$, then 
$1 = \varphi(a) = \frac{1}{2} (\varphi_1(a) +
\varphi_2(a))$, so that $\varphi_k \in E$
and so $\varphi_k = \varphi$.  Thus $a$
achieves its norm at an
extreme point of $S(A)$.  As we said above, $\varphi$ extends 
to a pure state on $B$.  If this pure state corresponds to a
minimal
projection
$r$ in $B^{**}$, then $r a r = r$, so that $ar = r$.
 
  Note that
$u(a)$ is a minimal, and a central,
 projection in $C^*({\rm oa}(a))^{**}$.
Thus if we define $\varphi_a$ by $\varphi_a(x) u(a) = x u(a)$,
for $x \in C^*({\rm oa}(a))^{**}$, then $\varphi_a$ is a character and
a pure state of $C^*({\rm oa}(a))$, and $\varphi_a(a) = 1$.
Since pure states extend, we may extend $\varphi_a$ to a
pure state of $B$.  
Conversely, suppose that $\psi$ is a pure state of $B$
such that $\psi(a) = 1$.   As we said above Proposition \ref{uconv},  
this means that
$\psi(u(a)) = 1$.  As above, there is a minimal projection 
$r$ in $B^{**}$ such that $\psi(x) r = rxr$ for all $x \in B$.
Thus $r u(a) r = r$, so that $r \leq u(a)$.  If $B = 
C^*({\rm oa}(a))$ it follows that $r = u(a)$, so that 
$\psi = \varphi_a$ on $C^*({\rm oa}(a))$.  Thus
$\varphi_a$ is the unique pure state on $C^*({\rm oa}(a))$ 
with value $1$ at $a$.  
\end{proof}   
 
{\bf Remark.}
 (1) \ Any  pure state on $C^*({\rm oa}(a))$          
different from $\varphi_a$
is also given by a minimal projection  $r \in C^*({\rm oa}(a))^{**}$,
but this time $r u(a) = 0$ (since $u(a)$ is central and not $r$), so that $r \leq 1 - u(a)$ and
so $\Vert a r \Vert < 1$ and $\Vert  r a \Vert < 1$ by 
Lemma \ref{haye}. 

(2)  \ If $a \in  \frac{1}{2} \mathfrak{F}_A$ has norm $1$ then
$u(a)$ is a projection, so the facts in the last proposition hold.
Moreover, the $\psi$ in the statement of that result restricts to 
an extreme point of $S({\rm oa}(a))$.   To see this, note that 
by the
first paragraph of the proof applied to $A = {\rm oa}(a)$,
there exists  $\rho \in {\rm ext}(S({\rm oa}(a)))$
with $\rho(a) = 1$.  Then $\rho$ extends to a pure state on $C^*({\rm oa}(a))$,
which must be $\varphi_a$.

\medskip

If one considers the algebra $A$ of upper triangular $2 \times 2$ matrices with the
diagonal entries equal to each other,  it is clear that there exist two 
orthogonal minimal projections in $C^*(A) = M_2$ which restrict to the same
state on $A$.   From examples like this we see that pure states
of $C^*(A)$ need not `separate points' of $A$.  Also, 
it seems that a state of an 
operator algebra need not have a well defined `support projection' as exists in the
$C^*$-algebra theory.   Things seem to be better for  states that are extreme
points in $S(A)$ (perhaps the $q$ in the next result is similar to a support projection).  We note that $Q(A)$ has plenty of extreme points by the 
Krein-Milman theorem, and these are extreme points of $S(A)$.  Indeed this
argument shows that every state of $A$ is a weak* limit of convex combinations of 
extreme points of $S(A)$.   We now make some remarks about extreme points of $S(A)$.
 
First, if $\varphi \in {\rm ext}(S(A))$ then $\varphi$ need not achieve its norm
at an element of $\frac{1}{2} \mathfrak{F}_A$, unlike the $C^*$-algebra case.
Indeed if $A$ is a nonunital 
algebra of the type in \cite[Section 4]{BRead}, without
nontrivial open projections, then $A$ has no nontrivial compact projections.
If there existed $a \in \frac{1}{2} \mathfrak{F}_A$ with $\varphi(a) = 1$,
then $\varphi(u(a)) = 1$
as we said above Proposition \ref{uconv}, so that $u(a) \neq 0$.  Thus
$u(a) = e$, however $e$ is compact iff $e \in A$, which gives the 
contradiction that $A$ is unital.   In fact in such examples
 $\frac{1}{2} {\mathfrak F}_A$ contains no
norm $1$ elements besides $1$ (since $u(a)$  for such an element
is nonzero).   
 
Second, it seems unlikely that every $\varphi \in {\rm ext}(S(A))$
has a unique pure state extension to $B$. Note for example that the   
last part of the
last proof in \cite{Arv} should give rise to an explicit 
counterexample (and we thank Bill Arveson for a communication
on this point).  We also thank David Sherman for showing us
an example of a pure state on an operator system in $M_4$ with multiple
pure state extensions to the $C^*$-envelope.  
 However we have:

\begin{proposition}  \label{2dis} Let $\varphi \in {\rm ext}(S(A))$. 
 \begin{enumerate}  \item [{\rm (1)}]   
There exists a compact projection $q$ in $B^{**}$ such that
a pure state of $B$ extends $\varphi$ iff the
associated minimal projection is  dominated by $q$.
\item  [{\rm (2)}]   If $\varphi$ has
 more than one Hahn-Banach extension to $B$, then there
are two pure state extensions of $\varphi$ to $B$
 that are `mutually orthogonal',
that is, their associated minimal projections are mutually orthogonal.
\end{enumerate}   \end{proposition}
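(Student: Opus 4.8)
The plan is to realise, for a fixed $\varphi\in\mathrm{ext}(S(A))$, the set $E=\{\psi\in S(B):\psi|_A=\varphi\}$ of Hahn--Banach extensions of $\varphi$ to $B$ as an \emph{exposed} face of $S(B)$, and then read both parts off from the compact projection that exposes it.

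\emph{Part (1).} First I would verify that $E$ is a nonempty, weak* compact, convex \emph{face} of $S(B)$. Nonemptiness, and the fact that $S(A)$ consists exactly of the restrictions of $S(B)$, are standard under our running hypothesis that the cai of $A$ is a cai of $B$ (\cite[2.1.19]{BLM}). For weak* compactness, $E$ is a weak* closed subset of the weak* compact set $Q(B)$: if $\psi$ is a weak* limit of a net in $E$, then the normal extension of $\psi$ restricts on $A^{**}$ to the normal extension of $\varphi$, so $\psi(1_{B^{**}})=\psi(1_{A^{**}})=1$, hence $\psi\in S(B)$. That $E$ is a face uses extremality of $\varphi$: if $\psi=t\psi_1+(1-t)\psi_2\in E$ with $\psi_i\in S(B)$ and $0<t<1$, then $\varphi=t\,\psi_1|_A+(1-t)\,\psi_2|_A$ with $\psi_i|_A\in S(A)$, forcing $\psi_i|_A=\varphi$, i.e.\ $\psi_i\in E$. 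Now I invoke the $C^{*}$-algebra facial structure theory (e.g.\ \cite{AP}; the converse half of which is quoted as \cite[Lemma 2.4]{AAP} in the proof of Proposition \ref{wso}): every weak* closed face of $S(B)$ has the form $\{q\}_{\prime,S(B)}=\{\psi\in S(B):\psi(q)=1\}$ for a compact projection $q\in B^{**}$. (If one prefers to quote this only for unital $C^{*}$-algebras, pass to $B^{1}\supseteq A^{1}$, using that $\varphi$ extends uniquely to $\varphi^{1}\in\mathrm{ext}(S(A^1))$; since every pure state extension of $\varphi$ restricts to a state, hence a pure state, of $B$, its minimal projection already lies in $B^{**}$, so the resulting compact projection of $(B^1)^{**}$ lies in $B^{**}$ and is compact there by Theorem \ref{perun}/Corollary \ref{ifer}.) With such a $q$ fixed, part (1) is immediate: for a pure state $\psi$ of $B$ with associated minimal projection $r_\psi\in B^{**}$ one has $\psi(q)=1\iff r_\psi\le q$, so $\psi$ extends $\varphi$ $\iff$ $\psi\in E$ $\iff$ $r_\psi\le q$.

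\emph{Part (2).} Take $q$ as in (1) and assume $E$ has at least two points. Since $E$ is a face of $S(B)$, its extreme points are precisely the extreme points of $S(B)$ lying in $E$, i.e.\ the pure states of $B$ in $E$, and $E$ is the weak* closed convex hull of these by Krein--Milman; hence there are two distinct such pure states $\psi_1\neq\psi_2$, both with $r_{\psi_i}\le q$. If $r_{\psi_1}\perp r_{\psi_2}$ we are done. Otherwise their central supports in $B^{**}$ coincide, so $\psi_1,\psi_2$ are realised in one irreducible representation $\pi\colon B\to B(H)$, with $r_{\psi_i}$ the rank-one projection onto $\mathbb{C}\xi_i$ and $\langle\xi_1,\xi_2\rangle\neq 0$, so $\xi_1,\xi_2$ are linearly independent. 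Letting $\bar\pi\colon B^{**}\to B(H)$ be the normal extension, $\bar\pi(q)$ is a projection dominating both $r_{\psi_1},r_{\psi_2}$, so its range contains $\mathrm{span}\{\xi_1,\xi_2\}$; choose a unit vector $\xi_3\in\mathrm{span}\{\xi_1,\xi_2\}$ with $\xi_3\perp\xi_1$, and let $\psi_3$ be the corresponding vector state of $B$, which is pure. Within the relevant central summand $\bar\pi$ is an order isomorphism, so from $\bar\pi(r_{\psi_3})\le\bar\pi(q)$ we get $r_{\psi_3}\le q$, hence $\psi_3(q)=1$, so $\psi_3\in E$, i.e.\ $\psi_3$ extends $\varphi$; and $r_{\psi_3}\perp r_{\psi_1}$. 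Thus $\psi_1,\psi_3$ are two mutually orthogonal pure state extensions of $\varphi$.

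\emph{Main obstacle.} The crux is the appeal in Part (1) to the fact that \emph{every} weak* closed face of $S(B)$ is exposed by a compact projection; this is exactly the point where we leave ``operator algebra'' generality (cf.\ the remark after Proposition \ref{wso}, where this is noted to fail for general $A$) and rely on $C^{*}$-algebraic facial structure theory. Everything else is soft convexity (Krein--Milman, faces preserve extreme points) together with elementary von Neumann algebra bookkeeping about minimal projections and GNS representations. The one point I would double-check carefully is the nonunital reduction --- that the exposing projection can be taken in $B^{**}$ rather than only in $(B^{1})^{**}$ --- but the observation that pure-state extensions of a state of $A$ are automatically states of $B$ settles it.
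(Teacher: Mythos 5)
Your proof is correct and follows essentially the same route as the paper: you realize the set of Hahn--Banach extensions of $\varphi$ as a weak* closed face of $S(B)$, invoke the Akemann--Pedersen facial structure theory to expose it by a compact projection $q\in B^{**}$, characterize pure state extensions as those whose minimal projections lie under $q$, and for (2) produce two orthogonal minimal projections beneath $q$. Your only deviations are cosmetic --- you verify $\psi(q)=1\iff r_\psi\le q$ directly rather than via the projection--face order isomorphism, and you make the paper's appeal to the ``structure of atomic von Neumann algebras'' explicit with the rank-one argument in an irreducible representation.
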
 

 \begin{proof}  Consider the set $F$ of Hahn-Banach extensions
of $\varphi$ to $B$.   This is a face of $S(B)$
which is  weak* closed in $S(B)$.  Thus by the $C^*$-algebra
theory \cite{AP},  $F = \{ q \}_{\prime}$ for some 
compact projection in $B^{**}$.  Let $\psi$ be a pure state in $F$
(see the discussion at the start of this section),
with associated minimal projection $r$.  Since $r$ is
minimal, by the correspondence between 
projections and faces \cite{AP},
$\{ r \}_{\prime}$ is a minimal face.  It  is weak* closed
in $S(A)$ since $q$ is compact by Proposition
\ref{wso}.  It is also weak* closed in $Q(A)$, and so it has
extreme points.  By minimality, 
$\{ r \}_{\prime}$ is singleton, hence just contains  
$\psi$. Since
$\{ \psi \} \subset F$ we obtain $\{ r \}_{\prime}
\subset \{ q \}_{\prime}$, so that $r \leq q$ by the correspondence between 
compact projections and weak* closed faces \cite{AP}.  

Conversely, if $r$ is a minimal projection dominated by $q$,
then the associated pure state $\psi_r$  is in $\{ r \}_{\prime} \subset 
\{ q \}_{\prime} = F$.  So $\psi_r$ is an extension of $\varphi$.

 If $F$ is not a singleton, then since it is 
weak* closed in $Q(A)$, it is the weak* closed convex hull of its extreme points.
Hence $F$ has more than one extreme point.  These extreme points
are extreme points of $S(A)$, hence are exactly the 
pure states of $B$ which lie in $F$.
By the above, these correspond to minimal projections dominated by $q$.
Now $q B^{**} q$ is a von Neumann algebra with nontrivial atomic part.
In the atomic part there are two different minimal projections, and
hence by the structure of atomic von Neumann algebras there are 
two mutually orthogonal minimal projections.  Then consider
the associated pure states in $F$ as above.
\end{proof}

{\bf Closing Remark.}   One can ask if in Theorem \ref{peakch},  one may also choose $a$
 with $\Vert 1 - 2 a \Vert \leq 1$?   In this closing remark
we discuss
this issue.
 This is always true
iff for every compact projection $q \in A^{**}$, there exists a net $(y_t)$
in $\frac{1}{2} {\mathfrak F}_A$ such that $y_t q = q,$ and $y_t \to q$ weak*.
To see the one direction of this, substitute such $y_t$ into the proof
of Theorem \ref{peakch},
as in the proof of \cite[Theorem 2.24]{BRead}.  For the other direction,
proceed as in the proof of \cite[Corollary 2.25]{BRead}, but using
Theorem \ref{peakch} on the directed set of open $u \geq q$.
 One obtains $a_u \in \frac{1}{2} {\mathfrak F}_A$ such that $a_u q = q$,
and $(1-a_u)$ is a cai for the ideal in $A^1$ supported by $1-q$.
Thus $1- a_u \to 1-q$ and $a_u \to q$ weak*.

In this connection we remark that by the unital case,
there exists a net $(y_t)$
in $\frac{1}{2} {\mathfrak F}_{A^1}$ such that $y_t q = q,$ and $y_t  \to q$ weak*.
We also remark that by
\cite[Lemma 8.1]{BRead} there
 exists a net $(y_t)$
in $\frac{1}{2} {\mathfrak F}_A$ such that $y_t  \to q$ weak*.

We saw in Section 3
that there exists a net $(a_t)$
in $\frac{1}{2} {\mathfrak F}_A$ such that
 $u(a_t) \searrow q$.
This raises the question of whether
for $a \in \frac{1}{2} {\mathfrak F}_A$, if
we set $q = u(a)$, then does there
 exists a net $(y_t)$
in $\frac{1}{2} {\mathfrak F}_A$ such that $y_t q = q,$ and $y_t 
 \to q$ weak*? 
(Note that we cannot simply define the $y_t$ to be powers of $a$,
since these may leave $\frac{1}{2} {\mathfrak F}_A$.)   For
this latter question
one may assume that $A$ is the operator algebra oa$(a)$ generated by $a$, which
is commutative.
As we said in the last paragraph, there
 exists a net $(y_t)$
in $\frac{1}{2} {\mathfrak F}_A$ such that $y_t  \to q$ weak*.
Since $u(a)$ is a minimal and central  projection in $(C^*({\rm oa}(a))^{**}$,
we have $q y_t = \lambda_t q$, for scalars $\lambda_t$
which have limit $1$ (in fact it is easy to see also
that $|1- 2 \lambda_t| \leq 1$).

\subsection*{Acknowledgements}

We thank the referee for his welcome comments, and in particular 
suggesting some additional references.

\end{document}